\documentclass[11pt,a4paper]{amsart}

\usepackage[T1]{fontenc}
\usepackage[utf8]{inputenc}
\usepackage{lmodern}
\usepackage{microtype}
\usepackage[a4paper,margin=1in]{geometry}
\usepackage{amsmath,amssymb,mathtools,mathrsfs}
\usepackage{enumitem}
\usepackage{xcolor}
\usepackage{graphicx}
\usepackage{float}
\usepackage{hyperref}
\usepackage[nameinlink,capitalize,noabbrev]{cleveref}

\hypersetup{
  colorlinks=true,
  linkcolor=blue!45!black,
  citecolor=blue!45!black,
  urlcolor=blue!45!black,
  pdftitle={Uniform positive scalar curvature from supercritical quadratic decay},
  pdfauthor={Zhehui Wang and Jintian Zhu}
}

\setlist[enumerate,1]{
  label=(\arabic*),
  leftmargin=2.2em,
  itemsep=0.25em,
  topsep=0.35em
}
\setlist[itemize]{
  leftmargin=2em,
  itemsep=0.2em,
  topsep=0.35em
}

\newtheorem{theorem}{Theorem}[section]

\newtheorem{corollary}[theorem]{Corollary}
\newtheorem{proposition}[theorem]{Proposition}
\newtheorem{lemma}[theorem]{Lemma}

\crefname{conjecture}{conjecture}{conjectures}
\Crefname{conjecture}{Conjecture}{Conjectures}
\crefname{question}{question}{questions}
\Crefname{question}{Question}{Questions}

\newcommand{\Ric}{\operatorname{Ric}}

\numberwithin{equation}{section}
\DeclareMathOperator{\Hess}{Hess}
\DeclareMathOperator{\tr}{tr}
\DeclareMathOperator{\Div}{div}
\DeclareMathOperator{\supp}{supp}
\newcommand{\R}{\mathbb R}

\newcommand{\cB}{\mathcal B}
\newcommand{\cQ}{\mathcal Q}
\newcommand{\cD}{\mathcal D}

\newcommand{\ip}[2]{\langle #1,#2\rangle}
\title[Scalar curvature from quadratic decay]{Uniform positive scalar curvature from supercritical quadratic decay}
\author{Zhehui Wang}
\address{School of Sciences, Great Bay University, Dongguan 523000, China}
\email{wangzhehui@gbu.edu.cn}
\author{Jintian Zhu}
\address{Institute for Theoretical Sciences, Westlake University, Hangzhou 310030, China}
\email{zhujintian@westlake.edu.cn}
\date{}

\begin{document}

\makeatletter
\tagsleft@false
\let\veqno\@@eqno
\makeatother

\begin{abstract}
Let $(M^n,g)$ be a complete connected noncompact smooth Riemannian
manifold without boundary, with $n\ge3$ and positive scalar curvature.
We prove that $M$ admits a complete smooth metric of uniformly positive
scalar curvature if
$$
 \liminf_{d_g(o,x)\to\infty}d_g(o,x)^2R_g(x)>\frac{n-1}{n}
$$
for a fixed point $o\in M$. This extends earlier existence results for orientable manifolds,
due to Chen in dimension three and to the authors in dimensions $4\le n\le7$, to every dimension $n\ge3$ without an orientability assumption.
\end{abstract}
\maketitle

\section{Introduction}\label{sec:intro}

On a noncompact manifold, the scalar curvature of a complete metric can be positive everywhere while tending to zero at infinity. A natural question is how slowly scalar curvature needs to decay at infinity for the underlying manifold to admit
a complete metric with uniformly positive scalar curvature.

Let $(M,g)$ be a complete connected noncompact Riemannian
manifold. Fix $o\in M$ and write $r_o(x)=d_g(o,x)$.
The quantity $r_o^2R_g$ is invariant under constant rescaling of $g$, which makes quadratic decay a natural scale for the above problem. We define the {\it asymptotic quadratic scalar curvature coefficient} by
\begin{equation}\label{eq:Cinfty}
 \mathcal C_\infty(g)
 =\liminf_{r_o(x)\to\infty}r_o(x)^2R_g(x).
\end{equation}
When $R_g>0$, this coefficient is independent of the choice
of $o$ and may equal $+\infty$.
For any $c\ge0$, the condition $\mathcal C_\infty(g)>c$
holds precisely when, for some constant $C>c$,
$$
 R_g(x)\ge \frac{C}{r_o(x)^2}
$$
outside a compact set containing $o$.

Our main theorem gives a sharp sufficient condition for the existence of a complete metric with uniformly positive scalar curvature in every dimension $n\ge 3$.

\begin{theorem}\label{thm:main}
Let $n\ge3$, and let $(M^n,g)$ be a complete connected noncompact
smooth Riemannian manifold without boundary. If
\begin{equation}\label{eq:mainhyp}
 R_g>0\quad\text{on }M,
 \qquad \mathcal C_\infty(g)>\frac{n-1}{n},
\end{equation}
then $M$ admits a complete smooth Riemannian metric $\bar g$ with
$R_{\bar g}\ge1$ everywhere.
\end{theorem}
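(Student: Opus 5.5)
The plan is a conformal change adapted to the distance function, followed by a warping-type deformation that makes the scalar curvature uniformly positive. The threshold $\frac{n-1}{n}$ comes from the sharp constant relating the conformal Laplacian to a power of the distance.

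\textbf{Step 1: the weighted spectral inequality.} Smooth $r_o$ to a proper function $\rho\ge1$ with $|\nabla\rho|\le 1+\epsilon$ and $\rho\approx r_o$. By hypothesis, outside a compact set $K$ we have $R_g\ge C\rho^{-2}$ for some $C>\frac{n-1}{n}$. We look for a positive function $u$ such that
\[
-\tfrac{4(n-1)}{n-2}\Delta u+R_g u\ \ge\ \delta\,\rho^{-2}u,\qquad \delta>0 .
\]
This says that $\tilde g=u^{4/(n-2)}g$ has $R_{\tilde g}\ge \delta\rho^{-2}u^{-4/(n-2)}$.

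Try the ansatz $u=\rho^{-\alpha}$. Then
\[
-\Delta u=\alpha\rho^{-\alpha-1}\Delta\rho-\alpha(\alpha+1)\rho^{-\alpha-2}|\nabla\rho|^2 .
\]
The Laplacian term is only controlled in the barrier sense, with no sign in general. So I would instead work with the quadratic form $\int \tfrac{4(n-1)}{n-2}|\nabla\varphi|^2+R_g\varphi^2$. The Hardy-type identity with $\rho^{-\alpha}$ and integration by parts gives the lower bound
\[
\tfrac{4(n-1)}{n-2}\int|\nabla\varphi|^2 \;\ge\; -c_\alpha\int\rho^{-2}\varphi^2 ,
\]
where the optimal $\alpha$ makes the relevant constant exactly $\frac{n-1}{n}$ (the minimum of $\frac{4(n-1)}{n-2}\alpha(\alpha+1)-\ldots$ type expression). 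The hypothesis $C>\frac{n-1}{n}$ then leaves a positive weighted spectral gap at infinity. Positivity of $R_g$ on $K$ extends this to a global positive solution via an exhaustion argument. I expect this sharp constant computation, done without Laplacian comparison or curvature assumptions, to be the main obstacle. I would first try to establish it through a $\mu$-bubble or warped-product formulation, which absorbs $\Delta\rho$ by first variation.

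\textbf{Step 2: making the curvature uniform.} Rather than taking a conformal factor directly, pass to the warped product
\[
\hat g=g+u^2 dt^2 \quad\text{on } M\times S^1 ,
\]
or use the $\mu$-bubble framework in which $\rho^{-2}$ weights combine with the $\log\rho$ scale. Reparametrize radially via $s=\log\rho$. The metric $\rho^{-2}g$ is then asymptotically cylinder-like and complete, because $\int d\rho/\rho=\infty$. Its scalar curvature is
\[
\rho^2\bigl(R_g-\text{gradient terms}\bigr),
\]
and after the Step 1 conformal correction this is bounded below by $\delta'>0$ outside $K$. Concretely, take
\[
\bar g=\rho^{-2}u^{4/(n-2)}g\cdot\lambda
\]
with the exponent tuned so that the combined factor is a single conformal factor $w=\rho^{-\beta}$. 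Compute $R_{\bar g}$ using the conformal formula: the cross terms with $|\nabla\rho|^2$ produce exactly the quadratic $\frac{4(n-1)}{n-2}\bigl(\tfrac{n-2}{2}\bigr)^2$-type polynomial in $\beta$, whose minimum yields the $\frac{n-1}{n}$ threshold. The $\Delta\rho$ term must again be handled, by choosing $\rho$ so that $\rho\Delta\rho$ is bounded above. This is possible by smoothing a distance function, since the Laplacian comparison upper bound holds only with Ricci bounds, so instead use the exhaustion plus $\mu$-bubble argument.

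\textbf{Step 3: completeness and the compact part.} On $K$, $R_g>0$, so a bounded modification keeps positivity. Completeness of $\bar g$ follows from $\int \rho^{-\beta'}\,dr=\infty$ for $\beta'\le1$. Finally, rescale by a constant to get $R_{\bar g}\ge1$. No orientability is used, since everything is conformal or warped and does not rely on hypersurface two-sidedness arguments.
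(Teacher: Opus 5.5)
There is a genuine gap. Your proposal never identifies where the constant $\frac{n-1}{n}$ comes from. It also has no mechanism for controlling $\Delta_g\rho$, and no way to turn positivity into a complete metric on $M$ itself.

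\textbf{Step 1 is vacuous.} Since $\rho\ge1$ and $R_g>0$ on $K$, the function $u\equiv1$ already satisfies $-\frac{4(n-1)}{n-2}\Delta u+R_gu\ge\delta\rho^{-2}u$ with $\delta=\min\{C,\min_KR_g\}$, for any $C>0$. Likewise your displayed ``Hardy inequality'' holds trivially because its left side is nonnegative. So no threshold can emerge from that step.

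\textbf{Step 2 has no parameter to optimize.} With your ansatz of a single power-type conformal factor $\rho^{-a}$, uniform positivity forces $a\ge1$ and completeness along rays forces $a\le1$. So the metric is pinned to $h=s^{-2}g$, and then
\[
R_h=s^2\bigl(R_g+2(n-1)\Delta_g\log s-(n-1)(n-2)|d\log s|_g^2\bigr).
\]
Here there is no quadratic left to minimize, and the Laplacian term has no sign without Ricci bounds. You acknowledge this and defer to ``exhaustion plus $\mu$-bubble'', but that is exactly the hard part.

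\textbf{The $\mu$-bubble fallback does not close the gap.} $\mu$-bubble regularity limits you to $n\le7$, and the existing $\mu$-bubble exhaustion results assume orientability, so your Step 3 claim is unsupported. Moreover, a positively curved warped product on $M\times S^1$, or a $\mu$-bubble exhaustion, does not by itself produce a complete metric on $M$. The earlier works needed an extension theorem and block gluing to get there.

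\textbf{The missing ingredients.} The paper's key identity uses $h=s^{-2}g$, $\rho=\log s$, $\eta=-(n-1)\rho$ and a weighted scalar curvature with one extra dimension:
\[
\mathcal R_{n+1}(h,\eta)=R_h+2\Delta_h\eta-\tfrac{n-1}{n}|d\eta|_h^2=s^2R_g-\tfrac{n-1}{n}|ds|_g^2 .
\]
The $\Delta_g\rho$ terms cancel exactly, and the leftover gradient coefficient is precisely $\frac{n-1}{n}$. This positivity is critical for dimension $n+1$, so one must descend to a hypersurface diffeomorphic to $M$. The paper does this as follows.
\begin{enumerate}
\item It puts the weight $\Phi_\psi=\eta+\psi(t)$ on $M\times\R$.
\item It constructs an entire graph $t=f(x)$ with prescribed weighted mean curvature $\tan(\mu\Phi_\psi/\beta)$. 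This uses the continuity method on doubled compact exhaustions, barriers, and a gradient estimate uniform in $\psi$.
\item A Schoen--Yau type computation with the modified weight $\theta=\varphi+\log W$ gives $\mathcal R_n(G,\theta)\ge\delta/2$.
\item The metric $e^{-2\theta/(n-1)}G$ then has uniformly positive scalar curvature.
\item Completeness follows from properness of $u=\psi(f)$, together with choosing $\psi'$ small relative to the gradient majorant $\cB(u)$ so that $|du|\le1$ in the conformal metric.
\end{enumerate}
Because the hypersurface is a graph over $M$, it is diffeomorphic to $M$, has a global unit normal, and is smooth in every dimension. This is what actually removes the orientability and dimension restrictions. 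Your proposal contains none of these three ingredients: the extra-dimensional weighted identity, the descent to a hypersurface diffeomorphic to $M$, or the completeness control.
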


\subsection{Quadratic decay and earlier results}

Gromov's critical rate of decay conjecture
\cite[Section 3.6.1]{Gromov} predicts a dimensional constant $c_n>0$ with two properties for open orientable $n$-manifolds.
First, every such manifold with a complete metric of positive scalar curvature should admit, for each $0<c<c_n$, a complete metric with $R_g>0$ and $\mathcal C_\infty(g)\ge c$.
Second, the existence of a complete metric with $R_g>0$ and $\mathcal C_\infty(g)>c_n$ should imply the existence of a complete metric of uniformly positive scalar curvature. Chen \cite{Chen} proposed the sharp value $c_n=(n-1)/n$.

The conclusion of Theorem \ref{thm:main} was previously established for orientable three-manifolds by Chen \cite[Corollary 1.6]{Chen}, and for orientable manifolds of dimensions $4\le n\le7$ in our recent work \cite[Theorem 1.3]{WZ}. Theorem \ref{thm:main} proves the second assertion of Gromov's critical rate of decay conjecture with $c_n=(n-1)/n$ in every dimension $n\ge 3$, without an orientability assumption.

The threshold $(n-1)/n$ is sharp, and equality cannot be allowed in \eqref{eq:mainhyp}. Taking the exponent $1/n$ in Chen's warped product construction \cite[Section~2]{Chen} gives a complete metric of positive scalar curvature on $\R^2\times\mathbb T^{n-2}$ with $\mathcal C_\infty(g)=(n-1)/n$. However, this manifold admits no complete metric of uniformly positive scalar curvature \cite[p. 648]{GromovMetric}.

In dimension three, the existence of a complete metric of uniformly positive scalar curvature admits a topological characterization. A connected orientable three-manifold without boundary admits such a metric if and only if it is homeomorphic to a possibly infinite connected sum of spherical three-manifolds and copies of $S^2\times S^1$.
Using the surgery methods of Gromov and Lawson
\cite{GLsurgery} and Schoen and Yau \cite{SYsurgery},
one can construct complete metrics of uniformly positive scalar curvature on these connected sums.
In the converse direction, Chang, Weinberger, and Yu \cite[Theorem~3]{CWY} showed that a connected orientable three-manifold with finitely
generated fundamental group and a complete metric of
uniformly positive scalar curvature has the above
connected sum decomposition.
Bessi\`eres, Besson, and Maillot \cite[Theorem 1.1]{BBM} used Ricci flow to establish the connected sum decomposition for connected orientable three-manifolds admitting a complete metric of bounded geometry and uniformly positive scalar curvature. Gromov \cite[Section 3.10.2]{Gromov} and Wang \cite[Theorem 1.1]{JWang} independently established the general case using $\mu$-bubbles; Wang also uses minimal surfaces to identify the prime factors.

The connection between this classification and quadratic scalar curvature decay was developed by Balacheff, Gil Moreno de Mora Sard\`a, and Sabourau
\cite[Theorem 1.3 and Corollary 1.5]{BGM}.
Using fill-radius estimates, they proved that a complete connected orientable three-manifold with $R_g>0$ and $\mathcal C_\infty(g)>64\pi^2$ has the same connected sum decomposition and therefore admits a complete metric of uniformly positive scalar curvature.
Chen \cite[Theorem 1.5 and Corollary 1.6]{Chen} improved the threshold to the optimal value $2/3$ by constructing a $\mu$-bubble exhaustion.
Moreover, for $3\le n\le7$, Chen \cite[Proposition 1.8]{Chen} proved that every complete orientable noncompact Riemannian manifold $(M^n,g)$ satisfying $R_g>0$ and $\mathcal C_\infty(g)>(n-1)/n$ admits a $\mu$-bubble exhaustion by compact domains whose smooth boundaries admit metrics of positive scalar curvature.

Our recent proof in dimensions $4\le n\le7$ \cite{WZ}
uses Chen's $\mu$-bubble exhaustion to decompose the
manifold into compact blocks. 
A calibration argument establishes positivity of the
relative Yamabe invariant of each block with respect to the induced boundary conformal class.
The extension theorem of Akutagawa and Botvinnik
\cite[Corollary B]{AB} then yields positive scalar curvature
metrics on the blocks with prescribed product collars. Rescaling these metrics and joining adjacent blocks by suitable warped cylinders gives a complete metric with uniformly positive scalar curvature. The upper dimension bound comes from the regularity of the minimizing hypersurfaces used in the exhaustion.

The present proof gives a unified construction in every dimension $n\ge3$, without an orientability assumption. We solve a prescribed weighted mean curvature equation for an entire graph in a weighted product $M\times\R$.
Local gradient estimates and elliptic regularity provide the smoothness needed for the construction in all these dimensions.
Moreover, the graph has a globally defined unit normal even when $M$ is nonorientable.
With a suitable choice of the ambient weight, a conformal change of the induced graph metric produces a complete metric with uniformly positive scalar curvature.

For $n\ge3$, Theorem~\ref{thm:main} shows that $M$ admits a complete metric satisfying \eqref{eq:mainhyp} if and only if it admits a complete metric with uniformly positive
scalar curvature. The reverse implication follows from $\mathcal C_\infty(g)=+\infty$ whenever $\inf_M R_g>0$.
Consequently, obstructions to complete metrics with uniformly positive scalar curvature also rule out complete metrics satisfying \eqref{eq:mainhyp}.
For related obstruction results, see
\cite{GL,Zeidler,Cecchini,SWWZ,Sweeney}.

\subsection{Outline of the proof}
Our construction combines a conformal rescaling of the original metric with an entire graph in a weighted product.
The weighted conformal setup is motivated by the conformal identities and cylindrical blow-ups of Bi and the second author \cite[Sections~2--3]{BZ}.
The tangent profile for the prescribed mean curvature is motivated by Gromov's $\mu$-bubble method \cite[Section 3.7]{Gromov}.
The weighted setting is inspired by the weighted
$\mu$-bubble framework of Chodosh and Li
\cite[Section 3]{CL}.
We work with graphical solutions, whose smoothness follows from local gradient estimates and elliptic regularity, with no upper restriction on the dimension.

For a smooth Riemannian manifold $(N,g_N)$, a function $\theta_N\in C^\infty(N)$, and a real parameter $m>1$, define
\begin{equation}\label{eq:critical-weighted-scalar}
 \mathcal R_m(g_N,\theta_N)
 =R_{g_N}+2\Delta_{g_N}\theta_N
  -\frac{m-2}{m-1}|d\theta_N|_{g_N}^2.
\end{equation}
When $m=\dim N$, the expression
$\mathcal R_n(g_N,\theta_N)$ is related to the scalar curvature of $e^{-2\theta_N/(n-1)}g_N$ by the conformal scalar curvature formula. A uniform positive lower bound for $\mathcal R_n(g_N,\theta_N)$ therefore gives uniformly positive
scalar curvature after this conformal change, provided that $\theta_N$ is bounded below.

Let $(M^n,g)$ satisfy \eqref{eq:mainhyp}, and put $\beta=n-1$.
By the smooth approximation of the distance function, we obtain a smooth proper function
$s\ge1$ with $|ds|_g\le1$ and $s^2R_g\ge C_1>\beta/n$.
Set $h=s^{-2}g$, $\rho=\log s$, and $\eta=-\beta\rho$.
The conformal scalar curvature formula yields
\begin{equation*}\label{eq:intro-background}
 \mathcal R_{n+1}(h,\eta)
 =s^2R_g-\frac\beta n|d\rho|_h^2
 \ge C_1-\frac\beta n=:\delta>0.
\end{equation*}
The hypothesis $\mathcal C_\infty(g)>\beta/n$ allows us to choose $C_1>\beta/n$, yielding the positive lower bound $\delta$.

Since the conformal change $e^{-2\eta/\beta}h$ recovers $g$, we introduce the weight $\Phi_\psi(x,t)=\eta(x)+\psi(t)$
on $(M\times\R,\pi_M^*h+dt^2)$, where $\psi$ is an increasing diffeomorphism of $\R$ with suitably small derivatives.
Fix a sufficiently small constant $\mu>0$, depending only on $n$ and $\delta$, as specified in \eqref{eq:mu-choice}.
For each fixed admissible $\psi$, we construct an entire graph $\Gamma_f(x)=(x,f(x))$ in the band
$|\Phi_\psi|<\beta\pi/(2\mu)$, with prescribed weighted mean curvature $b_\psi=\tan(\mu\Phi_\psi/\beta)$.
The construction combines the method of continuity on closed auxiliary manifolds with local estimates and an exhaustion argument.
Comparison barriers keep the graphs uniformly separated from the band boundary on compact subsets.

Write $G=h+df\otimes df$, $W=\sqrt{1+|df|_h^2}$,
and $\varphi=\Gamma_f^*\Phi_\psi$.
We use the graph weight
\begin{equation*}\label{eq:normalized-graph-weight}
 \theta=\varphi+\log W
       =\psi(f)-\beta\rho+\log W.
\end{equation*}
This choice gives $e^{-\theta}dV_G=e^{-\varphi}dV_h$
and leads to a weighted scalar curvature estimate
independent of the graph slope $|df|_h$.
The estimate in Section \ref{sec:scalar} gives a uniform positive lower bound for $\mathcal R_n(G,\theta)$ provided that $\psi'$ is sufficiently small. Since the band condition also bounds $\theta$ uniformly from below, the conformal metric
$\widetilde g=e^{-2\theta/\beta}G$ has uniformly positive scalar curvature.

To prove completeness, set $u=\psi(f)$. The properness of $\rho$ and the band condition imply that $u$ is proper, and the gradient
estimate gives $\psi'(f)W\le\cB(u)$, where $\cB$ depends only on the fixed background data $(M, h, \rho, \eta)$, $\delta$ and $\mu$.
We can therefore choose $\psi$ before solving the graph equation, with $\psi'$ sufficiently small as a function of $\psi(t)$ to ensure $|du|_{\widetilde g}\le1$ while preserving the conditions required
for existence and curvature estimates.
Properness of $u$, together with the bound $|du|_{\widetilde g}\le1$, implies completeness of
$\widetilde g$. A constant rescaling of $\widetilde g$ then yields a complete metric with scalar curvature at least one, as asserted in Theorem \ref{thm:main}.

\subsection*{Organization of the paper}

Section \ref{sec:scale} constructs the weighted background and introduces the vertical potential.
Section \ref{sec:gradient} establishes the graph identities and a local gradient estimate uniform in the admissible potential.
Section \ref{sec:scalar} proves the weighted scalar
curvature estimate, and Section \ref{sec:existence}
constructs entire solutions of the prescribed weighted mean curvature equation.
Finally, Section \ref{sec:choice} selects the potential and completes the proof of Theorem \ref{thm:main} by establishing completeness and uniformly positive scalar curvature of the resulting conformal metric.

\subsection*{Disclosure on AI assistance.}
The central idea for the construction in this paper was proposed by the authors. ChatGPT 6.0 Astra assisted with exploring intermediate arguments, performing model calculations, and examining proposed proofs for gaps. The authors directed these discussions, critically evaluated the suggestions, and independently verified the arguments and wrote the final manuscript. The authors take full responsibility for its content.

\subsection*{Acknowledgements.} Z. Wang is partially supported by NSFC Grant No.\,12401247 and Guangdong Basic and Applied Basic Research Foundation Grant No.\,{2025A1515140243}. J. Zhu is partially supported by National Key R\&D Program of China 2023YFA1009900, NSFC Grant No.\,12401072, Zhejiang Provincial Natural Science Foundation of China under Grant No.\,LQKWL26A0101 and the start-up fund from Westlake University.

\section{The weighted ambient space}\label{sec:scale}

Fix $(M^n,g)$ as in Theorem~\ref{thm:main}, and set $$\beta=n-1.$$
We first convert the asymptotic scalar curvature bound into a global inequality involving a smooth proper function $s$. Then, we use $s$ to define the horizontal metric and weight, and modify the weight on the product space by adding a vertical potential.

\begin{lemma}\label{lem:scale}
Under the hypotheses of Theorem~\ref{thm:main}, let $C_1$ be a finite
constant with $(n-1)/n<C_1<\mathcal C_\infty(g)$. There is a smooth
proper function $s:M\to[1,\infty)$ such that
\begin{equation}\label{eq:scale}
 |ds|_g\le1,\qquad R_g\ge C_1s^{-2}
 \quad\text{on }M.
\end{equation}
\end{lemma}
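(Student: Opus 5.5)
We build $s$ from a smoothed distance function, with a small multiplicative slack in $C_1$ and a large additive constant. The slack will absorb the smoothing error, and the constant will absorb the compact set where the asymptotic bound gives no information.

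Choose $C_2$ with $C_1<C_2<\mathcal C_\infty(g)$; this is possible since $C_1<\mathcal C_\infty(g)$, even when $\mathcal C_\infty(g)=+\infty$. By the definition of $\mathcal C_\infty(g)$ there is $r_0>0$ such that
\[
R_g(x)\ge C_2\,r_o(x)^{-2}\qquad\text{whenever } r_o(x)\ge r_0 .
\]
The distance function $r_o$ is $1$-Lipschitz, so the standard smooth approximation of Lipschitz functions on complete manifolds (Greene--Wu) gives, for any $\epsilon>0$, a smooth function $\tilde r$ with $|\tilde r-r_o|\le 1$ and $|d\tilde r|_g\le 1+\epsilon$. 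We put
\[
s=\frac{\tilde r+A}{1+\epsilon},
\]
with a constant $A\ge 2$ to be fixed later. Then $s$ is smooth, and it is proper because $\tilde r\ge r_o-1$ and $r_o$ is proper on a complete manifold. We also get $|ds|_g\le 1$ and $s\ge (A-1)/(1+\epsilon)\ge1$ once $A\ge 2+\epsilon$.

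The curvature inequality is the only step that needs real care. Far out, where $r_o\ge r_0$, we have
\[
s\ge \frac{r_o-1+A}{1+\epsilon}\ge \frac{r_o}{1+\epsilon}
\]
as soon as $A\ge1$. This gives
\[
C_1s^{-2}\le C_1(1+\epsilon)^2r_o^{-2}\le C_2r_o^{-2}\le R_g,
\]
provided $\epsilon$ is chosen beforehand so that $(1+\epsilon)^2\le C_2/C_1$. The additive constant $A$ does no harm here, since it only increases $s$.

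On the compact ball $\bar B_{r_0}(o)$, the positivity $R_g>0$ together with continuity gives $\min_{\bar B_{r_0}}R_g=:m>0$. On this ball $s\ge (A-1)/(1+\epsilon)$, so $C_1s^{-2}\le m$ holds once $A$ is large enough, depending on $m$, $C_1$ and $\epsilon$. The order of choices is therefore: first $C_2$, then $\epsilon$, then $r_0$, then $\tilde r$, and finally $A$.
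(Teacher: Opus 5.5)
Your proposal is correct and follows essentially the same route as the paper. The paper also smooths $r_o$ (citing Azagra--Ferrera--L\'opez-Mesas--Rangel where you cite Greene--Wu), sets $s=(\tilde r+A)/(1+\epsilon)$ with $C_1(1+\epsilon)^2<\mathcal C_\infty(g)$, uses $s\ge r_o/(1+\epsilon)$ outside a compact set, and takes $A$ large to handle the compact set where only $R_g>0$ is available.
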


\begin{proof}
Choose $\varepsilon>0$ with
$C_1(1+\varepsilon)^2<\mathcal C_\infty(g)$. Applying the smooth approximation theorem for Lipschitz functions
\cite[Theorem~1]{AFMR} to $r_o$, we obtain a smooth function $\rho_0$ satisfying
$$
 |\rho_0-r_o|\le1,\qquad |d\rho_0|_g\le1+\varepsilon.
$$
For a constant $A_0\ge1$ to be chosen, set
$$
 s=\frac{\rho_0+A_0}{1+\varepsilon}.
$$
Then $|ds|_g\le1$ and
$s\ge r_o/(1+\varepsilon)$. The asymptotic hypothesis gives a
compact set $E_0$ containing $o$ in its interior such that
$$
 \frac{C_1}{s^2}
 \le\frac{C_1(1+\varepsilon)^2}{r_o^2}\le R_g
 \quad\text{on }M\setminus E_0.
$$
Note the positive continuous function $R_g$ has a positive minimum in $E_0$. We may therefore choose $A_0$ sufficiently large that
$C_1s^{-2}\le R_g$ on $E_0$ and $s\ge1$ on $M$.
Since $C_1s^{-2}$ decreases as $A_0$ increases, the inequality
on $M\setminus E_0$ remains valid.

Note $r_o\le(1+\varepsilon)s$. Then for every $c\ge1$ the sublevel set
$\{s\le c\}$ is a closed subset of the closed metric ball
$\overline {B_g(o,(1+\varepsilon)c)}$.
This ball is compact by the Hopf--Rinow theorem, so $s$ is proper.
\end{proof}

\begin{lemma}\label{lem:background}
Let $C_1$ and $s$ be as in Lemma~\ref{lem:scale}, and define
\begin{equation}\label{eq:background}
 \begin{gathered}
 \delta=C_1-\frac\beta n>0,\qquad 
 h=s^{-2}g,\qquad \rho=\log s,\qquad \eta=-\beta\rho.
 \end{gathered}
\end{equation}
On $M\times\R$, let $\pi_M:M\times\R\to M$ be the
projection and set
$$
 \widehat h=\pi_M^*h+dt^2,\qquad \widehat\eta=\pi_M^*\eta.
$$
Then $\rho:M\to[0,\infty)$ is proper, and
\begin{equation}\label{eq:background-gradient}
 |d\rho|_h=|ds|_g\le1,\qquad |d\eta|_h\le\beta.
\end{equation}
Moreover,
\begin{equation}\label{eq:cylindrical-weight}
 \begin{aligned}
 \mathcal R_{n+1}(h,\eta)
 &=s^2R_g-\frac{\beta}{n}|d\rho|_h^2\ge\delta,\\
 \mathcal R_{n+1}(\widehat h,\widehat\eta)
 &=\pi_M^*\bigl(\mathcal R_{n+1}(h,\eta)\bigr)\ge\delta.
 \end{aligned}
\end{equation}
\end{lemma}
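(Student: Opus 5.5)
The plan is a direct computation: rewrite everything in terms of $s$ using the conformal change $g=e^{2\rho}h$, and then observe that the product structure does not affect any of the quantities involved.

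\emph{Properness and gradient bounds.} Since $s\ge1$ and $s$ is proper by Lemma~\ref{lem:scale}, the function $\rho=\log s$ takes values in $[0,\infty)$. Its sublevel sets $\{\rho\le c\}=\{s\le e^c\}$ are compact, so $\rho$ is proper. For a covector, $|\cdot|_h=s|\cdot|_g$ because $h=s^{-2}g$. Hence
$$|d\rho|_h=s\,|d\rho|_g=s\cdot\frac{|ds|_g}{s}=|ds|_g\le1.$$
Then $|d\eta|_h=\beta|d\rho|_h\le\beta$.

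\emph{The identity for $\mathcal R_{n+1}(h,\eta)$.} Write $g=e^{2\rho}h$ and apply the standard conformal formula in dimension $n$:
$$R_g=e^{-2\rho}\bigl(R_h-2(n-1)\Delta_h\rho-(n-1)(n-2)|d\rho|_h^2\bigr).$$
Solving for $R_h$ gives
$$R_h=s^2R_g+2(n-1)\Delta_h\rho+(n-1)(n-2)|d\rho|_h^2.$$
With $m=n+1$ the coefficient in \eqref{eq:critical-weighted-scalar} is $(m-2)/(m-1)=(n-1)/n$. Substituting $\eta=-\beta\rho$ with $\beta=n-1$:
\begin{itemize}
\item the term $2\Delta_h\eta=-2(n-1)\Delta_h\rho$ exactly cancels the Laplacian term in $R_h$;
\item the gradient terms combine to
$$(n-1)(n-2)-\frac{(n-1)^3}{n}=\frac{(n-1)\bigl(n(n-2)-(n-1)^2\bigr)}{n}=-\frac{n-1}{n}.$$
\end{itemize}
This yields
$$\mathcal R_{n+1}(h,\eta)=s^2R_g-\frac\beta n|d\rho|_h^2.$$
By \eqref{eq:scale} we have $s^2R_g\ge C_1$, and $|d\rho|_h\le1$ from the first step. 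Therefore the right-hand side is at least $C_1-\beta/n=\delta$, which is positive by the choice of $C_1$ in Lemma~\ref{lem:scale}.

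\emph{The product statement.} For the Riemannian product $\widehat h=\pi_M^*h+dt^2$, the scalar curvature is $R_{\widehat h}=\pi_M^*R_h$, since the $\R$ factor is flat. A function pulled back from $M$ has $\Delta_{\widehat h}\widehat\eta=\pi_M^*\Delta_h\eta$ and $|d\widehat\eta|_{\widehat h}=\pi_M^*|d\eta|_h$. Definition \eqref{eq:critical-weighted-scalar} uses the same parameter $m=n+1$ on both sides, so
$$\mathcal R_{n+1}(\widehat h,\widehat\eta)=\pi_M^*\mathcal R_{n+1}(h,\eta)\ge\delta.$$

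The only real point of care is bookkeeping in the conformal formula: using dimension $n$ for the conformal change but the parameter $m=n+1$ in $\mathcal R_m$. This mismatch is exactly what makes the Laplacian terms cancel and leaves the coefficient $-\beta/n$. Every other step is routine.
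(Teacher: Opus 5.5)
Your proof is correct and is essentially the paper's computation. The only difference is bookkeeping: you write $g=e^{2\rho}h$, so the conformal formula is already in $h$-quantities and the Laplacian terms cancel at once, whereas the paper expresses $R_h$, $\Delta_h\eta$ and $|d\eta|_h^2$ through $g$-quantities.

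One small correction to your closing remark. The Laplacian cancellation comes from $\eta=-\beta\rho$ with $\beta=n-1$ and would occur for any $m$. The choice $m=n+1$ rather than $m=n$ is what leaves the residual $-\frac{\beta}{n}|d\rho|_h^2$; for $m=n$ the gradient terms cancel as well, giving $\mathcal R_n(h,\eta)=s^2R_g$.
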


\begin{proof}
Since $s\ge1$ is proper, $\rho=\log s$ is nonnegative and proper.
By the definitions of $h,\rho,\eta$ and $|ds|_g\le 1$, we know
$$
 |d\rho|_h=s|d\log s|_g=|ds|_g\le1,\qquad
 |d\eta|_h=\beta|d\rho|_h\le\beta.
$$
Then the conformal formulas give
\[
 \begin{split}
 R_h&=s^2\bigl(R_g+2\beta\Delta_g\rho
                    -\beta(\beta-1)|d\rho|_g^2\bigr),\\
 \Delta_h\eta&=-\beta s^2
          \bigl(\Delta_g\rho-(\beta-1)|d\rho|_g^2\bigr),\\
 |d\eta|_h^2&=\beta^2|ds|_g^2.
 \end{split}
\]
Substituting these identities into \eqref{eq:critical-weighted-scalar} gives the first
identity in \eqref{eq:cylindrical-weight}. Its lower bound follows
from $s^2R_g\ge C_1$ and \eqref{eq:background-gradient}.
The second identity follows from the product structure and the
$t$-independence of $\widehat\eta$, which give
$$
 R_{\widehat h}=\pi_M^*R_h,\qquad
 \Delta_{\widehat h}\widehat\eta=\pi_M^*(\Delta_h\eta),\qquad
 |d\widehat\eta|_{\widehat h}^2=\pi_M^*(|d\eta|_h^2).
$$
\end{proof}

For the graph construction, fix $C_1,s$ and the background data
from Lemma~\ref{lem:background}.
For a smooth increasing diffeomorphism $\psi:\R\to\R$, define the
ambient weight
\begin{equation}\label{eq:ambient-potential}
 \Phi_\psi(x,t)=\eta(x)+\psi(t)=\psi(t)-\beta\rho(x).
\end{equation}
The product metric gives
$$
 \Delta_{\widehat h}\Phi_\psi=\Delta_h\eta+\psi''(t),\qquad
 |d\Phi_\psi|_{\widehat h}^2=|d\eta|_h^2+\psi'(t)^2.
$$
Here the horizontal quantities are evaluated at $x$. Substitution in
\eqref{eq:critical-weighted-scalar} yields
\begin{equation}\label{eq:ambient-weight-change}
 \mathcal R_{n+1}(\widehat h,\Phi_\psi)(x,t)
 =\mathcal R_{n+1}(h,\eta)(x)
       +2\psi''(t)-\frac\beta n\psi'(t)^2.
\end{equation}
We will choose $\psi$ so that
$$
 2|\psi''|+\frac{\beta}{n}\psi'^2\le\frac{\delta}{2}
 \qquad\text{on }\R,
$$
which ensures that
$\mathcal R_{n+1}(\widehat h,\Phi_\psi)\ge\delta/2$
throughout $M\times\R$. For each $x\in M$, the map
$t\mapsto\Phi_\psi(x,t)=\eta(x)+\psi(t)$ is an increasing
diffeomorphism of $\R$. Hence every band $\{|\Phi_\psi|<c\}$
with $c>0$ intersects each vertical fiber $\{x\}\times\R$.
We will then construct a smooth entire graph $t=f(x)$ in such a band
for a suitable constant $c>0$.

\section{Weighted graph geometry and a gradient estimate}
\label{sec:gradient}
We first derive graph identities and a local gradient estimate on the
fixed background $(M,h,\eta)$ from Lemma \ref{lem:background}.
The proofs of Lemma~\ref{lem:geometry} and Proposition \ref{prop:gradient}
require only smoothness of the background metric and weight.
With the same hypotheses on the graph, potential, and prescribed function,
both results therefore also apply to a smooth Riemannian manifold
$(N^n,g_N)$ with $\eta_N\in C^\infty(N)$.
The gradient estimate is uniform over the following class of vertical
potentials, allowing $\psi$ to be chosen before solving the graph
equation. Let $\mathscr P$ consist of all smooth increasing
diffeomorphisms $\psi:\R\to\R$ satisfying $\psi(0)=0$ and
\begin{equation}\label{eq:Aclass}
 0<\psi'\le1,\qquad |(\log\psi')'|\le1,\qquad
 (\log\psi')''\ge-1\quad\text{on }\R.
\end{equation}

\subsection{The graph identities}

For the projection $\pi_M:M\times\R\to M$, define the product
metric by
$$
 \widehat h=\pi_M^*h+dt^2.
$$
Fix $\psi\in\mathscr P$ and let
$\Phi_\psi(x,t)=\eta(x)+\psi(t)$. For an open set $U\subset M$ and
$f\in C^\infty(U)$, consider the graph embedding
$$
 \Gamma_f:U\longrightarrow M\times\R,\qquad
 \Gamma_f(x)=(x,f(x)),
$$
and write $\Sigma=\Gamma_f(U)$. We identify $\Sigma$ with $U$
via $\Gamma_f$ and denote the induced metric and restricted weight by
\begin{equation}\label{eq:graphnotation}
 \begin{aligned}
 G&=\Gamma_f^*\widehat h=h+df\otimes df,\\
 \varphi&=\Gamma_f^*\Phi_\psi=\eta+\psi(f).
 \end{aligned}
\end{equation}
In identities on $U$, derivatives of $\psi$ are evaluated at $f$
unless another argument is displayed. We also set
$$
 W=\sqrt{1+|df|_h^2},\qquad V=\frac{\nabla_hf}{W},\qquad \omega=d\log W.
$$
For $X\in T_xU$, write
$$
 \widetilde X=d\Gamma_f(X)
 =X+df(X)\partial_t\in T_{\Gamma_f(x)}\Sigma.
$$
Define the weighted Laplacian on the graph by
$$
 \Delta_{G,\varphi}u=\Delta_Gu-\ip{d\varphi}{du}_G
 \qquad \text{ for } u\in C^\infty(U).
$$

Let $\nabla$ and $\widehat\nabla$ be the Levi-Civita connections of
$h$ and $\widehat h$, respectively. Let $\nu$ be the downward unit normal to $\Sigma$, given by
$$
 \nu(\Gamma_f(x))=(V(x),-W(x)^{-1}).
$$
Define a symmetric $(0,2)$-tensor $A$ on $U$ by
$$
 A_x(X,Y)
 =\ip{\widehat\nabla_{\widetilde X}\nu}{\widetilde Y}_{\widehat h},
 \qquad \text{ for } X,Y\in T_xU.
$$
Thus $A$ is the pullback under $\Gamma_f$ of the second fundamental form of $\Sigma$, with the above sign convention.

For a smooth vector field $Y$ on $U$, set
\[
 \Div_{h,\eta}Y=e^\eta\Div_h(e^{-\eta}Y).
\]
In local coordinates on $U$, write
$$
 f_i=\partial_if,\qquad f^i=h^{ij}f_j.
$$
Let $H$ denote the pullback to $U$ of the mean curvature of $\Sigma$
with respect to $\nu$.
Then
\begin{equation}\label{eq:AH}
 A_{ij}=\frac{(\Hess_hf)_{ij}}W,\qquad
 H=\tr_GA=\Div_hV.
\end{equation}
The normal derivative of the ambient weight and
the corresponding weighted mean curvature $K$, both pulled
back to $U$, are given by
\begin{equation}\label{eq:ambient-mean-curvature}
 \begin{gathered}
 d\Phi_\psi(\nu)\circ \Gamma_f=d\eta(V)-\frac{\psi'(f)}W,\\
 K=H-d\Phi_\psi(\nu)\circ \Gamma_f
   =\Div_{h,\eta}V+\frac{\psi'(f)}W.
 \end{gathered}
\end{equation}

\begin{lemma}\label{lem:geometry}
For every $u\in C^\infty(U)$, the graph Laplacians satisfy
\begin{gather}
 \Delta_Gu=\tr_G\Hess_hu-HV(u),
 \label{eq:lappsi}\\
 \Delta_{G,\varphi}u
 =\tr_G\Hess_hu-KV(u)-\ip{d\eta}{du}_h.
 \label{eq:weighted-lappsi}
\end{gather}
The scalar curvature of $G$ satisfies
\begin{equation}\label{eq:gauss}
 R_G=R_h-2\Ric_h(V,V)+H^2-|A|_G^2.
\end{equation}
Moreover, the weighted Jacobi identity takes the form
\begin{equation}\label{eq:weighted-jacobi}
 \Delta_{G,\varphi}\log W
 =|A|_G^2+(\Ric_h+\Hess_h\eta)(V,V)
     +V(K)+|\omega|_G^2-\psi''(1-W^{-2}).
\end{equation}
\end{lemma}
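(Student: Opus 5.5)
I will treat the graph $\Sigma$ as a hypersurface in $(M\times\R,\widehat h)$ and pull every ambient identity back to $U$ through $\Gamma_f$. The tangential part of $\partial_t$ and the vector $V$ are the main bookkeeping tools. Since $\widehat h$ is a product and $\Gamma_f^*dt=df$, the graph inverse metric is
\[
 G^{ij}=h^{ij}-\frac{f^if^j}{W^2},
\]
and the lift of $V$ satisfies
\[
 d\Gamma_f(V)=(\nabla_h f/W,\ |df|_h^2/W).
\]

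For the Laplacian identity \eqref{eq:lappsi}, I will regard $u$ as the ambient function $\widehat u=u\circ\pi_M$. The standard hypersurface formula gives
\[
 \Delta_\Sigma\widehat u=\tr_\Sigma\widehat\nabla^2\widehat u-H\,d\widehat u(\nu).
\]
Because the metric is a product, $\widehat\nabla^2\widehat u=\pi_M^*\Hess_hu$, and pulling back gives $\tr_G\Hess_hu$. Also $d\widehat u(\nu)=du(V)$, and this yields \eqref{eq:lappsi}. For the weighted identity \eqref{eq:weighted-lappsi}, I must compute $\ip{d\varphi}{du}_G$, where $d\varphi=d\eta+\psi'df$. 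Applying $G^{ij}$ gives
\[
 \ip{d\varphi}{du}_G=\ip{d\eta}{du}_h-d\eta(V)V(u)+\psi'\,V(u)/W .
\]
This uses $G^{ij}f_j=f^i/W^2$. Combining with \eqref{eq:ambient-mean-curvature}, where $K=H-d\eta(V)+\psi'/W$, gives \eqref{eq:weighted-lappsi}.

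The Gauss identity \eqref{eq:gauss} is the traced Gauss equation $R_\Sigma=\widehat R-2\widehat\Ric(\nu,\nu)+H^2-|A|^2$. I will use $\widehat R=\pi_M^*R_h$ together with the fact that $\widehat\Ric$ annihilates $\partial_t$, so that $\widehat\Ric(\nu,\nu)=\Ric_h(V,V)$.

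For the Jacobi identity \eqref{eq:weighted-jacobi}, I would start from the classical identity for the angle function $\Theta=\ip{\nu}{-\partial_t}=W^{-1}$. Since $\partial_t$ is parallel, hence Killing, we have
\[
 \Delta_\Sigma\Theta+(|A|^2+\widehat\Ric(\nu,\nu))\Theta+\ip{\nabla^\Sigma H}{\partial_t^\top}=0.
\]
Then $\log W=-\log\Theta$ gives
\[
 \Delta_G\log W=|A|^2+\Ric_h(V,V)+|\omega|_G^2+\ip{\nabla H}{\partial_t^\top}W.
\]
Here $\partial_t^\top$ corresponds to $G^{ij}f_j=f^i/W^2$, so the last term equals $V(H)$ after multiplying by $W$. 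Checking this sign carefully against the downward normal convention is the first main obstacle.

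Next I subtract $\ip{d\varphi}{d\log W}_G$, using the formula above with $u=\log W$:
\[
 \ip{d\eta}{\omega}_h-d\eta(V)\omega(V)+\psi'\omega(V)/W .
\]
I rewrite $V(H)=V(K)+V(d\eta(V))-V(\psi'/W)$. The term $V(d\eta(V))$ equals $\Hess_h\eta(V,V)+d\eta(\nabla_VV)$. The term $V(\psi'/W)$ equals $\psi''|df|^2/W^3-\psi'\omega(V)/W$, and its first part is $\psi''(1-W^{-2})/W$.

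I have to be careful here. The $\psi'$ term reads $\psi''(1-W^{-2})/W$ with an extra factor $1/W$, but the target has $\psi''(1-W^{-2})$. This suggests my expression for $V(f)$ is off: in fact $V(f)=|df|^2/W$ and $V(\psi'(f))=\psi''|df|^2/W$. So I will redo this step using the general fact that the cross term is $d\Phi(\nu)$ differentiated along $\partial_t^\top W$, which should give exactly $\psi''(1-W^{-2})$.

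The remaining horizontal terms are $d\eta(\nabla_VV)$, $\ip{d\eta}{\omega}_h$ and $d\eta(V)\omega(V)$. These must cancel by the identity
\[
 \nabla_VV=W^{-1}\bigl(\nabla_h\log W\bigr)^{\sharp}-\omega(V)V ,
\]
which follows from $\Hess f(\nabla f,\cdot)=\tfrac12 d|df|^2=W\,dW$. Verifying this cancellation, and the exact powers of $W$ in the $\psi''$ term, is the step I expect to be most error-prone. I would carry it out in normal coordinates at a point, with $\nabla f$ aligned with $e_1$.
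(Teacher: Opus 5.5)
Your route is the paper's. The angle-function identity for the parallel field $\partial_t$ is exactly the paper's vertical-translation variation. The sign you worried about is right, since $\partial_t^\top=\nabla_Gf=W^{-1}V$ gives $\Delta_GW^{-1}+(|A|_G^2+\Ric_h(V,V))W^{-1}+W^{-1}V(H)=0$. Computing $\Delta_Gu$ from the ambient Hessian of $u\circ\pi_M$ is only a cosmetic variant of the paper's Christoffel-difference calculation. The Laplacian, drift and Gauss steps are correct as written.

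The cancellation step, however, fails as you state it, because your formula for $\nabla_VV$ carries a spurious factor. From the fact you cite, $\Hess_hf(\nabla_hf,\cdot)=W\,dW$, one gets $\nabla_V\nabla_hf=\nabla_hW$, and hence
\[
 \nabla_VV=W^{-1}\nabla_hW-\omega(V)V=\nabla_h\log W-\omega(V)V=\nabla_G\log W .
\]
With your extra $W^{-1}$, the horizontal terms $d\eta(\nabla_VV)-\ip{d\eta}{\omega}_h+d\eta(V)\omega(V)$ would leave $(W^{-1}-1)\ip{d\eta}{\omega}_h$, which is not in \eqref{eq:weighted-jacobi}. With the correct formula they vanish identically; this is the paper's $V(d\eta(V))=\Hess_h\eta(V,V)+\ip{d\eta}{\omega}_G$.

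The $\psi''$ term needs no appeal to a vague general fact. Since $V(f)=|df|_h^2/W=W(1-W^{-2})$, you get directly
\[
 V\bigl(\psi'(f)W^{-1}\bigr)=\psi''(1-W^{-2})-\psi'W^{-1}\omega(V).
\]
The term $+\psi'W^{-1}\omega(V)$ coming from $-V(\psi'/W)$ then cancels the $\psi'\omega(V)/W$ in $\ip{d\varphi}{\omega}_G$. With these two corrections your bookkeeping reproduces \eqref{eq:weighted-jacobi} exactly as in the paper.
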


\begin{proof}
In local coordinates on $U$, we have
\begin{equation}\label{eq:graph-inverse-metric}
 G^{ij}=h^{ij}-\frac{f^if^j}{W^2},\qquad
 \Gamma(G)^k_{ij}-\Gamma(h)^k_{ij}
 =\frac{f^k(\Hess_hf)_{ij}}{W^2}.
\end{equation}
The connection difference gives
$$
 \Delta_Gu
 =\tr_G\Hess_hu
 -\frac{G^{ij}(\Hess_hf)_{ij}}{W^2}\ip{df}{du}_h
 =\tr_G\Hess_hu-HV(u).
$$
This proves \eqref{eq:lappsi}. For $u,v\in C^\infty(U)$,
\eqref{eq:graph-inverse-metric} gives
$$
 \ip{du}{dv}_G=\ip{du}{dv}_h-V(u)V(v).
$$
In particular, $\ip{df}{du}_G=W^{-1}V(u)$, and hence
$$
 \ip{d\varphi}{du}_G
 =\ip{d\eta}{du}_h-d\eta(V)V(u)+\psi'W^{-1} V(u).
$$
By subtracting this drift term from \eqref{eq:lappsi} and using
\eqref{eq:ambient-mean-curvature}, we obtain
\eqref{eq:weighted-lappsi}.

The product structure gives $R_{\widehat h}\circ\Gamma_f=R_h$
and $\Ric_{\widehat h}(\nu,\nu)\circ\Gamma_f=\Ric_h(V,V)$.
Pulling the Gauss equation back to $U$ therefore gives
\eqref{eq:gauss}.

It remains to prove \eqref{eq:weighted-jacobi}.
Consider the vertical translations
$$
 F_\varepsilon:U\longrightarrow M\times\R,\qquad
 F_\varepsilon(x)=(x,f(x)+\varepsilon).
$$
Let $H_\varepsilon$ denote the mean curvature of $F_\varepsilon(U)$
with respect to the downward unit normal, pulled back to $U$ via $F_\varepsilon$. Since $d(f+\varepsilon)=df$,
\eqref{eq:AH} gives $H_\varepsilon=H$.
The initial velocity is $\partial_t$, and at $\Gamma_f(x)$ we have
$$
 \partial_t
 =-W(x)^{-1}\nu(\Gamma_f(x))
 +d\Gamma_f\bigl(W(x)^{-1}V(x)\bigr).
$$
The mean curvature variation formula therefore gives
$$
 0=\left.\frac{dH_\varepsilon}{d\varepsilon}
       \right|_{\varepsilon=0}
  =\Delta_GW^{-1}
    +(|A|_G^2+\Ric_h(V,V))W^{-1}
    +W^{-1}V(H).
$$
Multiplying by $W$ and using
$W\Delta_GW^{-1}=-\Delta_G\log W+|\omega|_G^2$,
we obtain
\begin{equation}\label{eq:jacobi}
 \Delta_G\log W
 =|A|_G^2+\Ric_h(V,V)+V(H)+|\omega|_G^2.
\end{equation}

We next rewrite $V(H)$ in terms of $K$.
Differentiating $\log W$ gives $\omega_i=A_{ij}V^j$, while
\eqref{eq:graph-inverse-metric} yields $\nabla_Gf=W^{-1}V$.
Consequently,
\begin{equation}\label{eq:AV}
 \omega=A(V,\cdot),\qquad
 \ip{\omega}{df}_G=W^{-1}\omega(V),\qquad
 |df|_G^2=1-W^{-2}.
\end{equation}
For every vector field $X$ on $U$, by differentiating
$V=W^{-1}\nabla_hf$ and using \eqref{eq:AV}, we have
$$
 h(\nabla_VV,X)
 =W^{-1}\Hess_hf(V,X)-\omega(V)h(V,X)=\omega(X)-\omega(V)h(V,X).
$$
Thus, by \eqref{eq:graph-inverse-metric},
\begin{equation*}\label{eq:graph-acceleration}
 \nabla_VV
 =\nabla_h\log W-\omega(V)V
 =\nabla_G\log W.
\end{equation*}
It follows that
\begin{equation}\label{eq:graph-weight-derivative}
 V(d\eta(V))
 =\Hess_h\eta(V,V)+d\eta(\nabla_VV)
 =\Hess_h\eta(V,V)+\ip{d\eta}{\omega}_G.
\end{equation}
Differentiating \eqref{eq:ambient-mean-curvature} and using
$V(f)=W(1-W^{-2})$ gives
 \begin{equation}\label{eq:VH}
 V(H)
 =V(K)+V(d\eta(V))-\psi''(1-W^{-2})
 +\psi'W^{-1}\omega(V). 
 \end{equation}
Since $d\varphi=d\eta+\psi' df$, \eqref{eq:AV} implies
$$
 \ip{d\varphi}{\omega}_G
 =\ip{d\eta}{\omega}_G+\psi'W^{-1}\omega(V).
$$
Substituting \eqref{eq:graph-weight-derivative} and \eqref{eq:VH} into \eqref{eq:jacobi}, we obtain
\eqref{eq:weighted-jacobi} by the definition of
$\Delta_{G,\varphi}$.
\end{proof}

For later use, we record the corresponding formulas for $f$
and $\psi(f)$. Taking $u=f$ in \eqref{eq:lappsi} and using
$\tr_G\Hess_hf=WH$ and $V(f)=W-W^{-1}$, we obtain
\begin{equation}\label{eq:lapf}
 \Delta_Gf=W^{-1}H.
\end{equation}
By \eqref{eq:AV} and \eqref{eq:ambient-mean-curvature},
we also have
\begin{equation}\label{eq:weighted-lapf}
 \Delta_{G,\varphi}f
 =W^{-1}(H-d\eta(V))-\psi'(1-W^{-2})
 =W^{-1}K-\psi'.
\end{equation}
Applying the chain rule to $\psi(f)$ and using
\eqref{eq:AV} and \eqref{eq:lapf}, we obtain
\begin{equation}\label{eq:lapz}
 \begin{gathered}
 \Delta_G(\psi(f))
 =\psi'W^{-1}H+\psi''(1-W^{-2}),\\
 |d(\psi(f))|_G^2=\psi'^2(1-W^{-2}).
 \end{gathered}
\end{equation}

\subsection{A uniform gradient estimate}
Let $b\in C^\infty(\mathscr O)$, where $\mathscr O\subset U\times\R$
is an open set containing the graph of $f$. We consider the
prescribed weighted mean curvature equation
\begin{equation}\label{eq:pde}
 K=b(x,f(x)).
\end{equation}
Here $d_xb$ denotes the differential of $b$ in the $x$ variable
with $t$ fixed, and $b_t=\partial_t b$. In formulas on $U$,
$b$ and its derivatives are evaluated at $(x,f(x))$.

Set
$$
 \cD=\psi'(f)W.
$$
The following proposition gives a local bound for $\cD$
under the ratio bounds in \eqref{eq:generalratios}, with
constants independent of $\psi\in\mathscr P$.

\begin{proposition}\label{prop:gradient}
In the setting above, assume that $U\Subset M$.
Let $E\Subset U$ be a nonempty compact set, and fix
$q_0>0$ and $0\le\Lambda<\infty$.
Suppose that $f$ solves \eqref{eq:pde} and that
\begin{equation}\label{eq:generalratios}
 \frac{b_t}{\psi'}\ge q_0,\qquad
 |b|+|d_xb|_h\le\Lambda\frac{b_t}{\psi'}
 \quad\text{on }U.
\end{equation}
Then
\begin{equation}\label{eq:Dbound}
 \sup_E\cD\le C_{E,U},
\end{equation}
where $C_{E,U}<\infty$ depends only on $n$, $q_0$, $\Lambda$, $E$, $U$, and the background data $h$, $\eta$ on $U$, and is independent of $\psi$, $f$, $b$.
\end{proposition}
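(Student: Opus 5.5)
We prove \eqref{eq:Dbound} by a maximum principle argument in the spirit of Korevaar's local gradient estimate, applied to the weighted Jacobi identity \eqref{eq:weighted-jacobi}. The first step is to compute $\Delta_{G,\varphi}\log\cD$, where $\log\cD=\log\psi'(f)+\log W$.

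For the term $\log\psi'(f)$, we use the chain rule together with \eqref{eq:weighted-lapf} and $|df|_G^2=1-W^{-2}$:
$$
 \Delta_{G,\varphi}\log\psi'(f)=(\log\psi')'\,(W^{-1}K-\psi')+(\log\psi')''(1-W^{-2}).
$$
For the term $\log W$, we use \eqref{eq:weighted-jacobi}. Since $K=b(x,f)$, the chain rule gives
$$
 V(K)=d_xb(V)+b_t\,V(f)=d_xb(V)+b_t(W-W^{-1}).
$$
The key good term is therefore $b_t W\ge q_0\psi' W=q_0\cD$, which is linear in $\cD$.

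The next step is to check that every other term is either nonnegative or controlled by $\cD$ times a constant independent of $\psi$. The terms $|A|_G^2$ and $|\omega|_G^2$ are nonnegative. The term $(\Ric_h+\Hess_h\eta)(V,V)$ is bounded by $C_U$ because $|V|_h<1$. The term $|d_xb(V)|$ is at most $\Lambda b_t/\psi'$. The term $-\psi''(1-W^{-2})+(\log\psi')''(1-W^{-2})$ rewrites as $-(\log\psi')'^2\psi'\cdot(\dots)$. More precisely, $\psi''=\psi'(\log\psi')'$, so this combination is $(1-W^{-2})\bigl[(\log\psi')''-\psi'(\log\psi')'\bigr]\ge-2$ by \eqref{eq:Aclass}. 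Finally, $(\log\psi')'(W^{-1}K-\psi')$ is bounded by $|b|/W+1\le\Lambda (b_t/\psi')/W+1$.

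All the ``bad'' terms involving $b$ are thus bounded by $\Lambda b_t/\psi'$. We compare them against the good term $b_t W=(b_t/\psi')\cD$. When $\cD\ge 2\Lambda+1$, the good term absorbs them, leaving roughly
$$
 \Delta_{G,\varphi}\log\cD\ge \tfrac12 (b_t/\psi')\cD-C\ge \tfrac{q_0}{2}\cD-C.
$$
This ratio structure is exactly what makes the estimate uniform in $\psi$ and $b$.

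To localize, we take a cutoff $\zeta\in C_c^\infty(U)$ with $\zeta=1$ on $E$, built from the background $h$ only. We then consider the maximum of $\zeta\cD$, or of $\log\cD+\log\zeta$; Korevaar's version uses $\zeta$ depending on $f$ too, but we will try a purely horizontal cutoff. At an interior maximum point $x_0$ we have $d\log\cD=-d\log\zeta$ and $\Delta_{G,\varphi}(\log\cD+\log\zeta)\le0$.

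Since the cutoff is horizontal, we need its weighted Laplacian with respect to $G$, which is given by \eqref{eq:weighted-lappsi}:
$$
 \Delta_{G,\varphi}\zeta=\tr_G\Hess_h\zeta-KV(\zeta)-\ip{d\eta}{d\zeta}_h.
$$
Here $\tr_G$ is bounded because $G\ge h$, and $|KV(\zeta)|\le|b|\,|d\zeta|_h\le\Lambda(b_t/\psi')|d\zeta|_h$. This last bound is again of ratio type and is absorbed by the good term $(b_t/\psi')\cD$ once $\cD$ is large. The gradient term $|d\log\zeta|_G^2\le|d\zeta|_h^2/\zeta^2$ is handled in the standard way: we use $\zeta^2$ in place of $\zeta$ and multiply through by $\zeta^2$, or absorb it against $|\omega|_G^2$ via $d\log\cD=\omega+(\log\psi')'df$.

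The main obstacle is making the cutoff terms of order $1/\zeta^2$ compatible with a good term that is only linear in $\cD$. The concern is that the error $|d\zeta|^2/\zeta^2$ might not be beaten by $\cD$ alone. We would first try the test function $\zeta\cD$ directly. At its maximum this yields $\zeta\cD\cdot(q_0/2)\le C(1+\Lambda)$ plus terms like $|d\zeta|^2/\zeta$, which stay bounded if $\zeta$ is chosen as the square of a cutoff. That gives $\cD\le C_{E,U}$ on $E$, with dependence only on $n,q_0,\Lambda,E,U,h,\eta$.
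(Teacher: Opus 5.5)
Your proposal is correct and, once you settle on the test function $\zeta\cD$ with $\zeta$ the square of a horizontal cutoff, it is essentially the paper's argument. The paper maximizes $\Psi=\zeta^2\cD$ and combines the weighted Jacobi identity with the ratio bounds on $b$, discarding $|A|_G^2$ and $|\omega|_G^2$, dividing by $q\ge q_0$ and multiplying by $\zeta^2$. It then handles the factor $1-W^{-2}$ by the case split $\cD(x_0)\ge2$, where $\psi'\le1$ forces $W^{-2}\le 1/4$; your alternative of absorbing cutoff terms into $|\omega|_G^2$ is not needed.
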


\begin{proof}
Choose a cutoff function $\zeta\in C_c^\infty(U)$, depending
only on $E$ and $U$, such that $0\le\zeta\le1$ on $U$ and
$\zeta=1$ on $E$. Set
$$
 \Psi=\zeta^2\cD.
$$
Since $\Psi$ has compact support in $U$ and is positive on $E$,
it attains a positive maximum at some $x_0\in U$.
In particular, $\zeta(x_0)>0$.

Near $x_0$, write
$$
 \kappa=d\log\zeta^2,\qquad q=\frac{b_t}{\psi'}.
$$
Differentiating \eqref{eq:pde} along $V$, we obtain
$$
 V(K)=d_xb(V)+q\cD(1-W^{-2}).
$$
By \eqref{eq:AV}, \eqref{eq:weighted-lapf} and
\eqref{eq:weighted-lappsi},
\[
 \begin{aligned}
 \Delta_{G,\varphi}\log\psi'(f)
 &=(\log\psi')''(1-W^{-2})
   +(\log\psi')'\frac KW-\psi'',\\
 \Delta_{G,\varphi}\log\zeta^2
 &=\tr_G\Hess_h\log\zeta^2
   -K\kappa(V)-\ip{d\eta}{\kappa}_h.
 \end{aligned}
\]
All subsequent pointwise calculations are carried out at $x_0$.
Since $\log\Psi$ attains a local maximum there, we have
$d\log\Psi=0$ and $\Delta_G\log\Psi\le0$, and hence
$\Delta_{G,\varphi}\log\Psi\le0$.
Note
$$
 \log\Psi=\log\zeta^2+\log\psi'(f)+\log W.
$$
Combining these identities with
\eqref{eq:weighted-jacobi} gives
\begin{equation}\label{eq:gradientidentity}
 \begin{aligned}
 0\ge\Delta_{G,\varphi}\log\Psi
 ={}&(\Ric_h+\Hess_h\eta)(V,V)
      +|A|_G^2+|\omega|_G^2\\
 &+d_xb(V)+q\cD(1-W^{-2})
      +(\log\psi')'W^{-1}b\\
 &+(\log\psi')''(1-W^{-2})
      -\psi''(2-W^{-2})\\
 &+\tr_G\Hess_h\log\zeta^2
      -b\kappa(V)-\ip{d\eta}{\kappa}_h.
 \end{aligned}
\end{equation}

Since $\overline U$ is compact, we may choose $C_U\ge0$,
depending only on $U$, $h$, $\eta$, such that
$$
 \Ric_h+\Hess_h\eta\ge-C_Uh,\qquad
 |d\eta|_h\le C_U
 \quad\text{on }U.
$$
Since $\psi\in\mathscr P$, we have
$$
 |(\log\psi')'|\le1,\qquad
 |\psi''|=\psi'|(\log\psi')'|\le1,\qquad
 (\log\psi')''\ge-1.
$$
Together with $W\ge1$, these imply
$$
 (\log\psi')''(1-W^{-2})
 -\psi''(2-W^{-2})\ge-3.
$$
Moreover, since $|V|_h\le1$, \eqref{eq:generalratios} yields
$$
 \left|
 d_xb(V)+(\log\psi')'\frac bW-b\kappa(V)
 \right|
 \le\Lambda q(1+|\kappa|_h).
$$
By \eqref{eq:graph-inverse-metric}, the trace term satisfies
$$
 \tr_G\Hess_h\log\zeta^2
 \ge -n|\Hess_h\log\zeta^2|_h.
$$
Using this bound and the preceding estimates in
\eqref{eq:gradientidentity}, and discarding the nonnegative
terms $|A|_G^2$ and $|\omega|_G^2$, we obtain
$$
 q\cD(1-W^{-2})
 \le{}C_U+3+\Lambda q(1+|\kappa|_h)+C_U|\kappa|_h+n|\Hess_h\log\zeta^2|_h.
$$
Dividing by $q\ge q_0$ gives
$$
 \cD(1-W^{-2})
 \le\Lambda+\frac{C_U+3}{q_0}
   +\left(\Lambda+\frac{C_U}{q_0}\right)|\kappa|_h
   +\frac n{q_0}|\Hess_h\log\zeta^2|_h.
$$

The cutoff derivatives satisfy
\[
 \begin{aligned}
 \zeta^2|\kappa|_h
 &=2\zeta|d\zeta|_h,\\
 \zeta^2|\Hess_h\log\zeta^2|_h
 &\le2\zeta|\Hess_h\zeta|_h+2|d\zeta|_h^2.
 \end{aligned}
\]
Multiplying the preceding estimate by $\zeta^2$ and using
$0\le\zeta\le1$, we obtain
\begin{equation}\label{eq:psi-est}
 \Psi(1-W^{-2})\le L_\zeta,
\end{equation}
where
$$
 L_\zeta=\Lambda+\frac{C_U+3}{q_0}
 +2\left(\Lambda+\frac{C_U}{q_0}\right)
       \|d\zeta\|_\infty
 +\frac{2n}{q_0}
   \left(\|\Hess_h\zeta\|_\infty
         +\|d\zeta\|_\infty^2\right).
$$
Here the supremum norms are taken on $U$ with respect to $h$.

If $\cD(x_0)\ge2$, then $\psi'\le1$ gives
$$
 W(x_0)^{-2}
 =\frac{\psi'(f(x_0))^2}{\cD(x_0)^2}
 \le\frac14.
$$
Hence, by \eqref{eq:psi-est} we have $3\Psi(x_0)\le4L_\zeta$.
If $\cD(x_0)<2$, then $\Psi(x_0)\le\cD(x_0)<2$.
Since $\zeta=1$ on $E$, it follows that
$$
 \sup_E\cD
 \le\Psi(x_0)
 \le\max\left\{2,\frac{4}{3}L_\zeta\right\}.
$$
Taking $C_{E,U}=\max\{2,4L_\zeta/3\}$ proves the estimate with the stated dependence.
\end{proof}

For the graph construction, fix the background data
$(M,h,\rho,\eta)$ and $\delta>0$ from Lemma~\ref{lem:background}.
With $\beta=n-1$, choose
\begin{equation}\label{eq:mu-choice}
 0<\mu\le\min\left\{
 \frac{n}{8\beta},
 \frac{\delta}{8(1+2\beta^2n^{-2})}
 \right\}.
\end{equation}
For each $\psi\in\mathscr P$, define
\begin{equation}\label{eq:strip}
 \begin{aligned}
 \mathscr S_\psi
 &=\left\{(x,t)\in M\times\R:
   |\Phi_\psi(x,t)|<\frac{\beta\pi}{2\mu}\right\},\\
 b_\psi(x,t)
 &=\tan\left(\frac{\mu}{\beta}\Phi_\psi(x,t)\right),
 \qquad \text{ for } (x,t)\in\mathscr S_\psi.
 \end{aligned}
\end{equation}
Since $\psi$ is an increasing diffeomorphism of $\R$,
the band $\mathscr S_\psi$ intersects every vertical fiber.
We seek a smooth entire graph contained in $\mathscr S_\psi$
and satisfying
\[
 K=b_\psi(x,f(x)).
\]
The following lemma provides the bounds needed to apply
Proposition~\ref{prop:gradient}.

\begin{lemma}\label{lem:profile}
On $\mathscr S_\psi$, the function $b_\psi$ satisfies
\[
 \begin{gathered}
 \frac{(b_\psi)_t}{\psi'(t)}\ge\frac{\mu}{\beta},\\
 |b_\psi|+|d_xb_\psi|_h
 \le
 \frac{\beta}{\mu}\left(\frac12+\mu\right)
 \frac{(b_\psi)_t}{\psi'(t)}.
 \end{gathered}
\]
For each fixed $x\in M$, the function $t\mapsto b_\psi(x,t)$
is strictly increasing on the interval
$\{t:(x,t)\in\mathscr S_\psi\}$, and tends to $-\infty$ and
$+\infty$ at its lower and upper endpoints, respectively.
\end{lemma}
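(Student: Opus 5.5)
Everything reduces to differentiating $b_\psi=\tan(\mu\Phi_\psi/\beta)$ by the chain rule. Write $\tau=\mu\Phi_\psi/\beta$, so $|\tau|<\pi/2$ on $\mathscr S_\psi$. Then
\[
 (b_\psi)_t=\frac{\mu}{\beta}\sec^2\tau\,\psi'(t),\qquad
 d_xb_\psi=\frac{\mu}{\beta}\sec^2\tau\,d\eta .
\]
Setting $q=(b_\psi)_t/\psi'=\frac{\mu}{\beta}\sec^2\tau$, the first inequality follows at once from $\sec^2\tau\ge1$.

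For the second inequality I estimate the two terms on the left separately in terms of $q$.

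\emph{The $x$-derivative.} By \eqref{eq:background-gradient}, $|d\eta|_h\le\beta$. Hence
\[
 |d_xb_\psi|_h\le\mu\sec^2\tau=\beta q\le\frac{\beta}{\mu}\,\mu\, q .
\]

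\emph{The function itself.} Here $|b_\psi|=|\tan\tau|$. Using $|\tan\tau|\le\frac12(1+\tan^2\tau)=\frac12\sec^2\tau$ (AM--GM), we get
\[
 |b_\psi|\le\frac12\sec^2\tau=\frac{\beta}{\mu}\cdot\frac12\,q .
\]
Adding the two bounds gives
\[
 |b_\psi|+|d_xb_\psi|_h\le\frac{\beta}{\mu}\Bigl(\frac12+\mu\Bigr)q,
\]
which is exactly the stated constant.

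\emph{Monotonicity and boundary behavior.} Fix $x$. Since $\psi$ is an increasing diffeomorphism, $t\mapsto\Phi_\psi(x,t)$ maps the fiber interval $\{t:(x,t)\in\mathscr S_\psi\}$ increasingly and onto $(-\beta\pi/(2\mu),\beta\pi/(2\mu))$. Composing with $\tan(\mu\,\cdot/\beta)$, which increases from $-\infty$ to $+\infty$ on that interval, gives strict monotonicity and the claimed limits at the endpoints. Alternatively, strict monotonicity also follows from $(b_\psi)_t>0$, since $\psi'>0$.

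The only step that is not immediate is matching the constant $\frac12+\mu$. The key observations are the AM--GM bound of $|\tan\tau|$ by $\frac12\sec^2\tau$, and the gradient bound $|d\eta|_h\le\beta$. No smallness assumption on $\mu$ from \eqref{eq:mu-choice} is needed here.
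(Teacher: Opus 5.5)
Your proposal is correct and follows essentially the same route as the paper. Both differentiate $\tan(\mu\Phi_\psi/\beta)$, bound the $x$-derivative using $|d\eta|_h=\beta|d\rho|_h\le\beta$, and bound $|b_\psi|$ by $\tfrac12(1+b_\psi^2)$; the monotonicity and endpoint limits come from the monotonicity of $\Phi_\psi(x,\cdot)$ and of the tangent function.
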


\begin{proof}
Since $\Phi_\psi(x,t)=-\beta\rho(x)+\psi(t)$,
differentiation gives
\begin{equation}\label{eq:profile-derivatives}
 \begin{aligned}
 (b_\psi)_t
 &=\frac{\mu}{\beta}\psi'(t)(1+b_\psi^2)>0,\\
 d_xb_\psi
 &=-\mu(1+b_\psi^2)d\rho.
 \end{aligned}
\end{equation}
In particular,
\[
 \frac{(b_\psi)_t}{\psi'(t)}
 =\frac{\mu}{\beta}(1+b_\psi^2)
 \ge\frac{\mu}{\beta}.
\]
Using $|d\rho|_h\le1$ from
\eqref{eq:background-gradient} and
$2|b_\psi|\le1+b_\psi^2$, we obtain
$$
 |b_\psi|+|d_xb_\psi|_h
 \le\left(\frac12+\mu\right)(1+b_\psi^2)
 =\frac{\beta}{\mu}\left(\frac12+\mu\right)
   \frac{(b_\psi)_t}{\psi'(t)}.
$$
Finally, for fixed $x$, the quantity
$(\mu/\beta)\Phi_\psi(x,t)$ increases from $-\pi/2$ to
$\pi/2$ along the vertical interval in $\mathscr S_\psi$.
The monotonicity and endpoint limits of $b_\psi$ follow
from those of the tangent function.
\end{proof}

\begin{corollary}\label{cor:uniformD}
Let $U\Subset M$ be open and let $E\Subset U$ be a nonempty
compact set. Suppose that $\psi\in\mathscr P$ and
$f\in C^\infty(U)$ satisfy
$$
 K=b_\psi(x,f(x)),\qquad
 \Gamma_f(U)\subset\mathscr S_\psi.
$$
Then
$$
 \sup_E\cD\le C_{E,U},
$$
where $C_{E,U}<\infty$ depends only on the fixed background data,
$\mu$, $E$ and $U$, and is independent of $\psi$ and $f$. The constant $C_{E,U}$ does not depend on the distance from
the graph to $\partial\mathscr S_\psi$, and no positive lower bound on this distance is required.
\end{corollary}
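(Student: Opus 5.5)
The corollary follows by applying Proposition~\ref{prop:gradient} with $b=b_\psi$, using Lemma~\ref{lem:profile} to supply the ratio bounds \eqref{eq:generalratios} with explicit constants. First, $b_\psi$ is smooth on the open set $\mathscr S_\psi$. Hence $\mathscr O=\mathscr S_\psi\cap(U\times\R)$ is an open subset of $U\times\R$ containing the graph of $f$, since $\Gamma_f(U)\subset\mathscr S_\psi$. Thus $f$ solves \eqref{eq:pde} with $b=b_\psi\in C^\infty(\mathscr O)$, and every evaluation of $b$, $b_t$, $d_xb$ occurs at points $(x,f(x))\in\mathscr S_\psi$, where Lemma~\ref{lem:profile} applies.

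Next, set
$$
 q_0=\frac{\mu}{\beta},\qquad \Lambda=\frac{\beta}{\mu}\Bigl(\frac12+\mu\Bigr).
$$
Lemma~\ref{lem:profile} gives exactly $b_t/\psi'\ge q_0$ and $|b|+|d_xb|_h\le\Lambda\, b_t/\psi'$ at every point of the graph, which are the hypotheses \eqref{eq:generalratios} on $U$. Both constants depend only on $n$ (through $\beta=n-1$) and on $\mu$, and not on $\psi$ or $f$. Since $\psi\in\mathscr P$, $U\Subset M$, and $E\Subset U$ is nonempty and compact, Proposition~\ref{prop:gradient} yields $\sup_E\cD\le C_{E,U}$. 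Here $C_{E,U}$ depends only on $n,q_0,\Lambda,E,U$ and on $h,\eta$ restricted to $U$, hence only on the background data, $\mu$, $E$ and $U$.

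No step is a genuine obstacle. The one point to check is the final claim that the constant is insensitive to the distance from the graph to $\partial\mathscr S_\psi$. Near the band boundary $|b_\psi|\to\infty$, so $|b|$ and $|d_xb|_h$ are not uniformly bounded. However, Lemma~\ref{lem:profile} controls them by $b_t/\psi'=(\mu/\beta)(1+b_\psi^2)$, which blows up at the same rate. In the proof of Proposition~\ref{prop:gradient}, only the ratios $q$ and $\Lambda$ enter, and one divides by $q\ge q_0$. So the bound involves no upper bound on $|b_\psi|$, and no lower bound on the distance to $\partial\mathscr S_\psi$ is needed.
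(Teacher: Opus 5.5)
Your proposal is correct and matches the paper's proof: it applies Proposition~\ref{prop:gradient} with $\mathscr O=\mathscr S_\psi\cap(U\times\R)$, $q_0=\mu/\beta$ and $\Lambda=\frac{\beta}{\mu}\bigl(\frac12+\mu\bigr)$, with these constants supplied by Lemma~\ref{lem:profile}. Your remark on why no distance-to-boundary bound is needed is accurate. The blow-up of $|b_\psi|$ and $|d_xb_\psi|_h$ is dominated by $b_t/\psi'=\frac{\mu}{\beta}(1+b_\psi^2)$, and the gradient estimate only uses these ratios.
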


\begin{proof}
By Lemma~\ref{lem:profile}, the bounds
\eqref{eq:generalratios} hold along the graph with
\[
 q_0=\frac{\mu}{\beta},\qquad
 \Lambda=\frac{\beta}{\mu}\left(\frac12+\mu\right).
\]
The conclusion follows from Proposition~\ref{prop:gradient},
applied with $\mathscr O=\mathscr S_\psi\cap(U\times\R)$.
\end{proof}

\section{Weighted scalar curvature of the graph}\label{sec:scalar}

Throughout this section, we fix the background data of
Lemma \ref{lem:background} and $\mu>0$ satisfying \eqref{eq:mu-choice}. For $\psi\in\mathscr P$ and an open
set $U\subset M$, let $f\in C^\infty(U)$ satisfy
\begin{equation}\label{eq:scalar-graph-assumptions}
 K=b_\psi(x,f(x)),\qquad
 \Gamma_f(U)\subset\mathscr S_\psi.
\end{equation}
We use the graph notation introduced in
Section~\ref{sec:gradient}. Unless otherwise indicated,
derivatives of $\psi$ are evaluated at $f$.

Set
$$
 \theta=\varphi+\log W,\qquad a=d\eta(V).
$$
Since $dV_G=W\,dV_h$, the choice of $\theta$ gives
\begin{equation*}\label{eq:graph-weighted-density}
 e^{-\theta}dV_G=e^{-\varphi}dV_h.
\end{equation*}
Define
\begin{equation}\label{eq:scalar-Q-definition}
 Q=K^2+|A|_G^2-\frac1n\bigl(d\Phi_\psi(\nu)\circ\Gamma_f\bigr)^2+\frac n\beta
   \left|\omega+\frac1n d\varphi\right|_G^2.
\end{equation}

\begin{lemma}\label{lem:scalar-identity}
Equip $\Sigma$ with the induced metric and the modified weight $\theta\circ\Gamma_f^{-1}$. Its weighted scalar curvature,
pulled back to $U$ via $\Gamma_f$, is given by
\begin{equation}\label{eq:scalar-Q-identity}
 \mathcal R_n(G,\theta)
 =\Gamma_f^*\mathcal R_{n+1}(\widehat h,\Phi_\psi)
  +2V(K)+Q-2\psi''.
\end{equation}
\end{lemma}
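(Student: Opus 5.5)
Expand $\mathcal R_n(G,\theta)=R_G+2\Delta_G\theta-\frac{n-2}{n-1}|d\theta|_G^2$ term by term, using the identities of Lemma~\ref{lem:geometry}, and recognize the result as the right-hand side of \eqref{eq:scalar-Q-identity}. Since $\theta=\varphi+\log W$, we have $\Delta_G\theta=\Delta_{G,\varphi}\theta+\ip{d\varphi}{d\theta}_G$. Working with weighted Laplacians is natural because \eqref{eq:weighted-jacobi} is stated in that form.

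\emph{Step 1: the term $\Delta_{G,\varphi}\log W$.} Substitute \eqref{eq:weighted-jacobi}. This gives $|A|_G^2+(\Ric_h+\Hess_h\eta)(V,V)+V(K)+|\omega|_G^2-\psi''(1-W^{-2})$.

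\emph{Step 2: the term $\Delta_{G,\varphi}\varphi$.} Write $\varphi=\eta+\psi(f)$. Apply \eqref{eq:weighted-lappsi} to $\eta$: this gives $\tr_G\Hess_h\eta-Ka-|d\eta|_h^2$. For $\psi(f)$, use \eqref{eq:weighted-lapf} and $|df|_G^2=1-W^{-2}$: this gives $\psi'(W^{-1}K-\psi')+\psi''(1-W^{-2})$.

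The $\psi''(1-W^{-2})$ terms from Steps 1 and 2 cancel. So where does the final $-2\psi''$ come from? It arises from comparing with $\Gamma_f^*\mathcal R_{n+1}(\widehat h,\Phi_\psi)$. By \eqref{eq:ambient-weight-change}, that quantity contains $+2\psi''$, while the graph computation produces no bare $\psi''$. Writing the graph expression as $\Gamma_f^*\mathcal R_{n+1}(\widehat h,\Phi_\psi)-2\psi''+\cdots$ accounts for it.

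\emph{Step 3: curvature terms.} Use \eqref{eq:gauss} for $R_G$ and $\tr_G\Hess_h\eta=\Delta_h\eta-\Hess_h\eta(V,V)$. Combining $R_h-2\Ric_h(V,V)$ with $2\Ric_h(V,V)+2\Hess_h\eta(V,V)$ from Step 1 and $2\tr_G\Hess_h\eta$ from Step 2 yields $R_h+2\Delta_h\eta$, with all $V,V$ terms cancelling. Together with $-\frac{\beta-1}{\beta}|d\eta|_h^2$, this reconstructs $\mathcal R_{n+1}(h,\eta)$ up to a $|d\eta|_h^2$ discrepancy. That discrepancy must be absorbed into the quadratic terms.

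\emph{Step 4: the quadratic algebra.} Collect everything quadratic in first-order quantities: $H^2$, $|A|_G^2$ (from Gauss with a minus sign, plus $2|A|_G^2$ from Jacobi, net $+|A|_G^2$), $2|\omega|_G^2$, $2\ip{d\varphi}{d\theta}_G$, $-\frac{n-2}{n-1}|\omega+d\varphi|_G^2$, $-2Ka$, $2\psi'W^{-1}K$, $-2|d\eta|_h^2$, $-2\psi'^2$, and $-\frac{\beta-1}{\beta}$-type terms. Substitute $H=K+a-\psi'/W$ and set $d\Phi_\psi(\nu)=a-\psi'/W$. Then $H^2-2K(a-\psi'/W)=K^2+\ldots$, which produces the $K^2$ in $Q$. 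Use $|d\varphi|_G^2=|d\eta|_h^2-a^2+\psi'^2(1-W^{-2})+2\psi'\ldots$; more cleanly, $|d\varphi|_G^2+(d\Phi_\psi(\nu))^2=|d\Phi_\psi|_{\widehat h}^2=|d\eta|_h^2+\psi'^2$. This converts the remaining terms into $-\frac1n(d\Phi_\psi(\nu))^2$ plus a perfect square in $\omega$ and $d\varphi$. Completing the square with coefficients $\frac{n}{\beta}$ and $\frac1n$ produces $\frac n\beta|\omega+\frac1n d\varphi|_G^2$.

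Step 4 is the main obstacle. It requires tracking the constants $\frac{n-2}{n-1}$ versus $\frac{n-1}{n}$ and verifying that the leftover coefficient of $|d\Phi_\psi|^2_{\widehat h}$ is exactly $-\frac{\beta-1}{\beta}$. I would check it first in the model case $\psi'=0$, $\omega=0$, then add each term separately.
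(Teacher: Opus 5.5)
Your route is the paper's: expand $\mathcal R_n(G,\theta)$ using \eqref{eq:gauss}, the weighted Jacobi identity \eqref{eq:weighted-jacobi}, the Laplacian of $\varphi=\eta+\psi(f)$, $H=K+\Phi_\nu$ with $\Phi_\nu=a-\psi'/W$, and $|d\varphi|_G^2=|d\Phi_\psi|_{\widehat h}^2-\Phi_\nu^2$, then complete a square. Computing $\Delta_{G,\varphi}\varphi$ instead of the paper's unweighted $\Delta_G\varphi$ is only bookkeeping. Steps 1--3 are correct, including the cancellation of $\psi''(1-W^{-2})$ and the origin of $-2\psi''$.

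The gap is that the target you set for the decisive algebra is wrong. By \eqref{eq:critical-weighted-scalar} with $m=n+1$, the gradient coefficient of $\mathcal R_{n+1}$ is $\frac{m-2}{m-1}=\frac{n-1}{n}=\frac{\beta}{n}$. Hence $\Gamma_f^*\mathcal R_{n+1}(\widehat h,\Phi_\psi)=R_h+2\Delta_h\eta+2\psi''-\frac{\beta}{n}|d\Phi_\psi|_{\widehat h}^2$, consistent with \eqref{eq:ambient-weight-change}. The leftover coefficient of $|d\Phi_\psi|_{\widehat h}^2$ must therefore be $-\frac{\beta}{n}$. The value $-\frac{\beta-1}{\beta}$ you propose to verify is false, so that check would fail. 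The constant $\frac{\beta-1}{\beta}=\frac{n-2}{n-1}$ belongs to $\mathcal R_n$ and enters exactly once, through $-\frac{\beta-1}{\beta}|\omega+d\varphi|_G^2$. There is no further $-\frac{\beta-1}{\beta}|d\eta|_h^2$ term to include, contrary to what Step 3 and the ``$-\frac{\beta-1}{\beta}$-type terms'' in Step 4 suggest.

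With the right target, the algebra you left open closes quickly. Your Steps 1--3 give
\[
 \begin{aligned}
 \Delta_G\theta={}&\Delta_h\eta+\Ric_h(V,V)+|A|_G^2+|\omega|_G^2+V(K)-K\Phi_\nu\\
 &-|d\Phi_\psi|_{\widehat h}^2+|d\varphi|_G^2+\ip{d\varphi}{\omega}_G.
 \end{aligned}
\]
Add \eqref{eq:gauss} and $-\frac{\beta-1}{\beta}|\omega+d\varphi|_G^2$, and use $H^2-2K\Phi_\nu=K^2+\Phi_\nu^2$. This gives
\[
 \begin{aligned}
 \mathcal R_n(G,\theta)={}&R_h+2\Delta_h\eta+2V(K)+K^2+|A|_G^2+\Phi_\nu^2-2|d\Phi_\psi|_{\widehat h}^2\\
 &+\frac n\beta|\omega|_G^2+\frac2\beta\ip{\omega}{d\varphi}_G+\frac n\beta|d\varphi|_G^2.
 \end{aligned}
\]
Now split the last coefficient as $\frac n\beta=\frac1{n\beta}+\frac{n+1}{n}$. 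The $\frac1{n\beta}$ part completes the square $\frac n\beta|\omega+\frac1n d\varphi|_G^2$. In the $\frac{n+1}{n}$ part, insert $|d\varphi|_G^2=|d\Phi_\psi|_{\widehat h}^2-\Phi_\nu^2$; this turns $\Phi_\nu^2-2|d\Phi_\psi|_{\widehat h}^2$ into $-\frac1n\Phi_\nu^2-\frac{n-1}{n}|d\Phi_\psi|_{\widehat h}^2$. Finally, $R_h+2\Delta_h\eta-\frac{n-1}{n}|d\Phi_\psi|_{\widehat h}^2=\Gamma_f^*\mathcal R_{n+1}(\widehat h,\Phi_\psi)-2\psi''$, so the result is exactly \eqref{eq:scalar-Q-identity}.
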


\begin{proof}
Write
$$
 \begin{aligned}
 \Phi_\nu
 &=d\Phi_\psi(\nu)\circ\Gamma_f
   =a-\frac{\psi'}W,\\
 \Phi_{\nu\nu}
 &=(\Hess_{\widehat h}\Phi_\psi)(\nu,\nu)\circ\Gamma_f=\Hess_h\eta(V,V)+W^{-2}\psi''.
 \end{aligned}
$$
These identities follow from
\eqref{eq:ambient-mean-curvature} and the product structure
of $\widehat h$.

Applying \eqref{eq:lappsi} to $\eta$ and using
\eqref{eq:graph-inverse-metric} and \eqref{eq:lapz}, we obtain
\[
 \Delta_G\varphi
 =\Delta_h\eta+\psi''-\Phi_{\nu\nu}-H\Phi_\nu.
\]
The orthogonal decomposition of the ambient gradient into
tangent and normal components gives
\begin{equation}\label{eq:dphiG}
 |d\varphi|_G^2
 =|d\eta|_h^2+\psi'^2-\Phi_\nu^2.
\end{equation}
Moreover, \eqref{eq:weighted-jacobi} can be written as
$$
 \Delta_{G,\varphi}\log W
 =|A|_G^2+\Ric_h(V,V)+\Phi_{\nu\nu}
  +V(K)+|\omega|_G^2-\psi''.
$$

Using
$$
 \Delta_G\log W
 =\Delta_{G,\varphi}\log W+\ip{d\varphi}{\omega}_G
$$
and $H=K+\Phi_\nu$, we substitute these formulas and
\eqref{eq:gauss} into the definition
\eqref{eq:critical-weighted-scalar} to obtain
$$
 \mathcal R_n(G,\theta)
 =R_h+2\Delta_h\eta
     +K^2-\Phi_\nu^2+|A|_G^2+2V(K)+\frac n\beta|\omega|_G^2
   +\frac2\beta\ip{\omega}{d\varphi}_G
   -\frac{n-2}{\beta}|d\varphi|_G^2.
$$
After applying \eqref{eq:ambient-weight-change} and \eqref{eq:dphiG}, this gives
$$
 \mathcal R_n(G,\theta)
 =\Gamma_f^*\mathcal R_{n+1}(\widehat h,\Phi_\psi)
      +2V(K)-2\psi''
 +K^2+|A|_G^2-\frac{\Phi_\nu^2}{n}
 +\frac n\beta|\omega|_G^2
   +\frac2\beta\ip{\omega}{d\varphi}_G
   +\frac1{n\beta}|d\varphi|_G^2.
$$
The last three terms combine as
\[
 \frac n\beta|\omega|_G^2
 +\frac2\beta\ip{\omega}{d\varphi}_G
 +\frac1{n\beta}|d\varphi|_G^2
 =
 \frac n\beta\left|\omega+\frac1n d\varphi\right|_G^2.
\]
This proves \eqref{eq:scalar-Q-identity}.
\end{proof}

The band condition $\Gamma_f(U)\subset\mathscr S_\psi$ and $W\ge1$ also give
\begin{equation}\label{eq:normalized-weight-lower}
 |\theta-\log W|<\frac{\beta\pi}{2\mu},
 \qquad
 \theta\ge-\frac{\beta\pi}{2\mu}.
\end{equation}
We now establish a lower bound for the weighted scalar curvature.

\begin{proposition}\label{prop:scalar}
Let $\psi\in\mathscr P$ and let $f\in C^\infty(U)$ satisfy
\eqref{eq:scalar-graph-assumptions}, where $b_\psi$ and
$\mathscr S_\psi$ are defined in \eqref{eq:strip}.
Set
\begin{equation*}\label{eq:scalar-constant}
 C_n=\frac{n^2+1}{n(n+1)}.
\end{equation*}
Then
\begin{equation}\label{eq:scalarbound}
 \mathcal R_n(G,\theta)
 \ge\frac{3\delta}{4}-C_n\psi'^2
 \qquad\text{on }U.
\end{equation}
\end{proposition}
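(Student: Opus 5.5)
The plan is to start from the identity of Lemma~\ref{lem:scalar-identity} and estimate its three pieces separately: the ambient term, the drift term $2V(K)$, and the quadratic form $Q$. By \eqref{eq:ambient-weight-change} and Lemma~\ref{lem:background}, $\Gamma_f^*\mathcal R_{n+1}(\widehat h,\Phi_\psi)\ge\delta+2\psi''-\frac\beta n\psi'^2$. The $+2\psi''$ cancels exactly against the $-2\psi''$ in \eqref{eq:scalar-Q-identity}, which leaves
\[
 \mathcal R_n(G,\theta)\ge\delta-\frac\beta n\psi'^2+2V(K)+Q .
\]
Note that $C_n-\beta/n=2/(n(n+1))$. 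This suggests the remaining loss in $\psi'^2$ should come from a term $-\frac{1}{n(n+1)}\Phi_\nu^2$ after splitting $\Phi_\nu^2\le 2a^2+2\psi'^2W^{-2}$. I will aim for exactly that bookkeeping.

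For the drift term, differentiate $K=b_\psi(x,f(x))$ along $V$ using \eqref{eq:profile-derivatives} and $V(f)=W-W^{-1}$:
\[
 V(K)=-\mu(1+b^2)\,d\rho(V)+\frac\mu\beta\psi'(1+b^2)(W-W^{-1}).
\]
Here $b=b_\psi(x,f)=K$. The second term is nonnegative and is discarded. Since $|d\rho(V)|\le|V|_h\le1$, this gives $2V(K)\ge-2\mu(1+b^2)$. The constant part $-2\mu$ is at most $\delta/4$ by \eqref{eq:mu-choice}. The part $-2\mu b^2$ must be absorbed by a positive multiple of $K^2$ left over from $Q$.

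For $Q$, I would use the trace inequality $|A|_G^2\ge H^2/n$ with $H=K+\Phi_\nu$, and complete the square in $K$:
\[
 K^2+\frac{(K+\Phi_\nu)^2}{n}-\frac{\Phi_\nu^2}{n}
 =\frac{n+1}{n}K^2+\frac2nK\Phi_\nu .
\]
Keeping a fraction $\epsilon K^2$ in reserve gives a lower bound of the form $\epsilon' K^2-c\,\Phi_\nu^2$, where $c$ is slightly larger than $1/(n(n+1))$. Then $\Phi_\nu=a-\psi'/W$ with $a=d\eta(V)=-\beta\,d\rho(V)$. The $\psi'$-part of $\Phi_\nu^2$ gives the advertised $-\frac{2}{n(n+1)}\psi'^2$, possibly after an extra factor in the split. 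The factor $(1+2\beta^2n^{-2})$ in \eqref{eq:mu-choice} suggests that the cross term involving $a$ and $K$ is absorbed with a weight $2\beta^2/n^2$, at the cost of a small multiple of $K^2$ that still dominates $2\mu b^2$.

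The main obstacle is the $a^2$ contribution, which is of size $\beta^2|V|^2$ and is not small compared with $\delta$. It cannot be controlled by $K^2$ or $\delta$. It has to be absorbed by the unused pieces of $|A|_G^2$ and by the completed square $\frac n\beta|\omega+\frac1n d\varphi|_G^2$, using $\omega=A(V,\cdot)$ from \eqref{eq:AV} and \eqref{eq:dphiG}. I would first decompose $A$ in a $G$-orthonormal frame whose first vector is $V/|V|_G$. In that frame $\omega$ only involves the first row of $A$, and the tangential part of $d\eta$ contains $a$ through $\langle d\eta,df\rangle_G=W^{-1}a$. I would then sharpen the trace inequality to $|A|^2\ge A_{11}^2+2\sum_{j>1}A_{1j}^2+\frac{(H-A_{11})^2}{n-1}$. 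Minimizing the resulting quadratic form in $(A_{11},A_{1j},K)$ should show that the $a$-dependence cancels up to a nonnegative remainder. If it does not cancel exactly, I would return to the unweakened background term $s^2R_g-\frac\beta n|d\rho|_h^2$ and use the slack $\frac\beta n(1-|d\rho(V)|^2/|V|^2)$. This algebraic minimization is where I expect the real work to lie.
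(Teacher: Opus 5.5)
Your reduction to $\mathcal R_n(G,\theta)\ge\delta-\frac\beta n\psi'^2+2V(K)+Q$ matches the paper. So do your drift bound $2V(K)\ge-2\mu(1+K^2)$ and your diagnosis that $|A|_G^2\ge H^2/n$ alone fails because of $a=d\eta(V)$. But the step that actually proves the proposition is left as ``should show'', and your prediction of its outcome is wrong: the $a$-dependence does not cancel.

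\textbf{The missing inequality.} What you need is the paper's Lemma~\ref{lem:scalar-Q}. Put $\alpha=W^{-2}$, take a $G$-orthonormal frame with $e_1\parallel\nabla_Gf$, and set $\xi=A(e_1,e_1)$. Then:
\begin{itemize}
\item use $|A|_G^2\ge\xi^2+(H-\xi)^2/\beta$ (the entries $A_{1j}$, $j>1$, can simply be dropped);
\item keep only the $e_1$-component $\sqrt{(1-\alpha)/\alpha}\,\xi+\frac1n\bigl(\sqrt{\alpha/(1-\alpha)}\,a+\psi'\sqrt{1-\alpha}\bigr)$ of $\omega+\frac1n d\varphi$;
\item substitute $H=K+a-\psi'\sqrt\alpha$;
\item complete the square in $\xi$ only. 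Do not minimize in $K$, since a multiple of $K^2$ must survive for the drift term.
\end{itemize}
The result is $Q\ge Q_\alpha-\frac{2\psi'\sqrt\alpha}{n}K$, where $Q_\alpha=\frac{n+1}{n}\alpha K^2+\frac{(a+n(1-\alpha)K)^2}{n\beta(1-\alpha)}\ge0$. The $a\psi'$ cross terms cancel exactly, and $a$ survives only inside a nonnegative square coupled to $K$. At critical points of $f$ your frame $V/|V|_G$ is undefined. There $a=\omega=0$ and the trace inequality suffices, but this case must be handled separately.

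\textbf{How $Q_\alpha$ is spent.} This is the second missing idea. From $a^2\le\beta^2(1-\alpha)$ and Young's inequality one gets $Q_\alpha\ge\frac{n}{2\beta}K^2-\frac\beta n$. The loss $\beta/n$ is of order one, so this bound may only be applied to the fraction $\frac{4\mu\beta}{n}\le\frac12$ of $Q_\alpha$. That yields $2\mu K^2-\frac{4\mu\beta^2}{n^2}$, which cancels the $-2\mu K^2$ from the drift and leaves $-2\mu(1+2\beta^2n^{-2})\ge-\delta/4$ by \eqref{eq:mu-choice}. The other half of $Q_\alpha$ is used only through its term $\frac{n+1}{2n}\alpha K^2$. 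That term absorbs $-\frac{2\psi'\sqrt\alpha}{n}K$ at the exact cost $\frac{2\psi'^2}{n(n+1)}$, and adding $\frac\beta n\psi'^2$ gives exactly $C_n$.

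\textbf{Your bookkeeping and fallback.} Your planned split $\Phi_\nu^2\le2a^2+2\psi'^2W^{-2}$, with a coefficient strictly above $1/(n(n+1))$, would give a constant strictly larger than $C_n$ even if the $a$-terms were harmless. Your fallback is also unavailable. The only slack in $\mathcal R_{n+1}(h,\eta)\ge\delta$ is $(s^2R_g-C_1)+\frac\beta n(1-|d\rho|_h^2)$. This is no larger than the quantity you wrote, and it has no positive lower bound in general; the second term vanishes wherever $|d\rho|_h=1$. So it cannot pay for an order-one $a^2$ term.
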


To prove Proposition~\ref{prop:scalar}, set $\alpha=W^{-2}$
and introduce the nonnegative auxiliary quantity
\begin{equation}\label{eq:scalar-Qalpha}
 Q_\alpha=
 \begin{cases}
 \displaystyle
 \frac{n+1}{n}\alpha K^2
 +\frac{\bigl(a+n(1-\alpha)K\bigr)^2}
        {n\beta(1-\alpha)},
 &0<\alpha<1,\\[6pt]
 \displaystyle\frac{n+1}{n}K^2,
 &\alpha=1.
 \end{cases}
\end{equation}
We next establish two lemmas. Lemma \ref{lem:scalar-Q} gives algebraic lower bounds for $Q$ and $Q_\alpha$. Lemma \ref{lem:scalar-horizontal} combines these estimates with the prescribed weighted mean curvature equation to bound $2V(K)+\frac12Q_\alpha$ from below.
The remaining half of $Q_\alpha$ will be used to absorb the mixed term involving $\psi'K$ in the proof of Proposition \ref{prop:scalar}.

\begin{lemma}\label{lem:scalar-Q}
On $U$, we have
\begin{equation}\label{eq:scalar-Q-lower}
 Q\ge Q_\alpha-\frac{2\psi'\sqrt\alpha}{n}K
\end{equation}
and
\begin{equation}\label{eq:scalar-Qalpha-lower}
 Q_\alpha\ge\frac{n}{2\beta}K^2-\frac{\beta}{n}.
\end{equation}
\end{lemma}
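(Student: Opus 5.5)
The plan is to prove \eqref{eq:scalar-Q-lower} by a pointwise quadratic minimization along the distinguished direction of $V$, and \eqref{eq:scalar-Qalpha-lower} by an elementary Young-type inequality using the bound on $a$.

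\textbf{First inequality.} Work at a point with $0<\alpha<1$; the case $\alpha=1$, where $V=0$ and $\omega=0$, will be treated directly. In that case
\[
 Q\ge K^2-\tfrac1n\Phi_\nu^2+|A|_G^2+\tfrac1{n\beta}|d\varphi|_G^2 .
\]
I would then use $|A|_G^2\ge H^2/n$, $H=K+\Phi_\nu$, and $\Phi_\nu=-\psi'$, which gives $|d\varphi|_G^2=|d\eta|_h^2\ge0$. This should leave something at least $\frac{n+1}{n}K^2-\frac{2\psi'}{n}K$, as required.

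For $0<\alpha<1$, let $e=V/|V|_G$, a $G$-unit vector. From \eqref{eq:graph-inverse-metric} one gets $|V|_G^2=(1-\alpha)-(1-\alpha)^2=\alpha(1-\alpha)$. I would lower-bound $|A|_G^2$ by splitting off the $e$-direction: with $\lambda=A(e,e)$,
\[
 |A|_G^2\ge \lambda^2+\frac{(H-\lambda)^2}{n-1},\qquad H=K+a-\psi'\sqrt\alpha .
\]
For the last term of $Q$ I would keep only its $e$-component:
\[
 \left|\omega+\tfrac1n d\varphi\right|_G^2\ge \Bigl(|V|_G\lambda+\tfrac1n d\varphi(e)\Bigr)^2 .
\]
Here $\omega(e)=A(V,e)=|V|_G\lambda$ by \eqref{eq:AV}. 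Also $d\varphi(V)=a+\psi'V(f)=a+\psi'\sqrt{\alpha}^{-1}(1-\alpha)$, using $V(f)=W(1-\alpha)$.

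Together with $-\frac1n\Phi_\nu^2=-\frac1n(a-\psi'\sqrt\alpha)^2$, this turns $Q$ into an explicit quadratic polynomial in the free variable $\lambda$, with coefficients depending on $K$, $a$, $\psi'$ and $\alpha$. I would minimize it over $\lambda\in\R$ by completing the square. The expected result is exactly $Q_\alpha-\frac{2\psi'\sqrt\alpha}{n}K$ plus a nonnegative remainder.

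The main obstacle is this algebra: verifying that all $\psi'$-dependent terms cancel except the single cross term $-2\psi'\sqrt\alpha K/n$, and that the constants reproduce $\frac{n+1}{n}\alpha$ and $n\beta(1-\alpha)$. I would first check it in the model case $\psi'=0$, where the identity must give $Q\ge Q_\alpha$. I would then track the linear-in-$\psi'$ terms separately. If the quadratic $\psi'^2$ terms do not cancel outright, I would expect them to combine into a nonnegative square that can be discarded.

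\textbf{Second inequality.} By Lemma~\ref{lem:background}, $|a|\le|d\eta|_h|V|_h\le\beta\sqrt{1-\alpha}$. For $0<\alpha<1$, apply $(x+y)^2\ge\frac12x^2-y^2$ with $x=n(1-\alpha)K$ and $y=a$:
\[
 \frac{(a+n(1-\alpha)K)^2}{n\beta(1-\alpha)}\ge \frac{n(1-\alpha)}{2\beta}K^2-\frac{a^2}{n\beta(1-\alpha)}\ge\frac{n(1-\alpha)}{2\beta}K^2-\frac\beta n .
\]
Since $\frac{n+1}{n}\ge\frac{n}{2(n-1)}$ for $n\ge2$, we also have $\frac{n+1}{n}\alpha K^2\ge\frac{n}{2\beta}\alpha K^2$. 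Summing the two bounds gives \eqref{eq:scalar-Qalpha-lower}.

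For $\alpha=1$, the inequality $\frac{n+1}{n}K^2\ge\frac n{2\beta}K^2\ge\frac n{2\beta}K^2-\frac\beta n$ is immediate.
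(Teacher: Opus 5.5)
\textbf{Verdict: the first inequality fails as written because $|V|_G^2$ is computed incorrectly; the approach and the rest of the proof are correct.}

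Your approach is the paper's. You isolate the unit direction $e$ of $V$ (equivalently of $\nabla_Gf$) and apply Cauchy--Schwarz to the other $n-1$ diagonal entries of $A$. You then keep only the $e$-component of $\omega+\frac1n d\varphi$ and complete the square in $\lambda=A(e,e)$. Your $\alpha=1$ case is fine. Your proof of the second inequality is also correct; it is the paper's Young-inequality step with slightly different bookkeeping.

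The problem is the one metric quantity the square-completion depends on: $|V|_G^2\neq\alpha(1-\alpha)$. What you computed, $G^{ij}V_iV_j=(1-\alpha)-(1-\alpha)^2$, is the $G$-norm of the covector $h(V,\cdot)=df/W$, not of the vector $V$. For the vector, $G=h+df\otimes df$ gives $|V|_G^2=|V|_h^2+df(V)^2=(1-\alpha)+(1-\alpha)^2/\alpha=(1-\alpha)/\alpha$. Your value even violates $|V|_G\ge|V|_h$.

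This error propagates, because $\omega(e)=|V|_G\lambda$ and $d\varphi(e)=d\varphi(V)/|V|_G$ both depend on it. With your value:
\begin{itemize}
\item the coefficient of $\lambda^2$ is $\frac n\beta(1+\alpha-\alpha^2)$ instead of $\frac{n}{\beta\alpha}$;
\item the $a^2$-coefficient is $\frac1{n\beta}\bigl(1+\frac1{\alpha(1-\alpha)}\bigr)$ instead of $\frac1{n\beta(1-\alpha)}$.
\end{itemize}
The minimization then does not return $Q_\alpha$, even in your model case $\psi'=0$.

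With the correct value, $\omega(e)=\sqrt{(1-\alpha)/\alpha}\,\lambda$ and $d\varphi(e)=\sqrt{\alpha/(1-\alpha)}\,a+\psi'\sqrt{1-\alpha}$. Expanding gives
\[
 Q\ge\frac{n}{\beta\alpha}\lambda^2-\frac2\beta\Bigl(K-\frac{\psi'}{\sqrt\alpha}\Bigr)\lambda
 +\frac n\beta K^2+\frac{2Ka}{\beta}-\frac{2\psi'\sqrt\alpha}{\beta}K
 +\frac{a^2}{n\beta(1-\alpha)}+\frac{\psi'^2}{n\beta}.
\]
The $a\psi'$ terms have already cancelled here, because $-\frac2\beta+\frac2n+\frac2{n\beta}=0$. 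Completing the square removes $\frac{\alpha}{n\beta}(K-\psi'/\sqrt\alpha)^2$, with the following effects:
\begin{itemize}
\item the $\psi'^2$ terms cancel exactly, so no leftover square is needed;
\item the $\psi'K$ terms combine to $-\frac{2\psi'\sqrt\alpha}{n}K$;
\item the $K^2$ coefficient becomes $\frac{n^2-\alpha}{n\beta}$.
\end{itemize}
The result is exactly $Q_\alpha-\frac{2\psi'\sqrt\alpha}{n}K$, with $Q_\alpha$ in expanded form, plus a discarded square. This computation is the entire content of the first inequality, so you need to carry it out rather than state its expected outcome.
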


\begin{proof}
We first prove \eqref{eq:scalar-Q-lower}.
At a point where $df\ne0$, choose a $G$-orthonormal basis
$e_1,\ldots,e_n$ with $e_1$ in the direction of $\nabla_Gf$,
and set
$$
 \xi=A(e_1,e_1).
$$
Since $\nabla_Gf=W^{-1}V$ and $|df|_G^2=1-\alpha$,
\begin{equation}\label{eq:Ve1}
 V=\sqrt{\frac{1-\alpha}{\alpha}}\,e_1,
 \qquad
 df(e_1)=\sqrt{1-\alpha}.
\end{equation}
Note
$$
 H-\xi=\sum_{i=2}^n A(e_i,e_i).
$$
Applying the Cauchy--Schwarz inequality to these $\beta=n-1$ diagonal
entries, we obtain
$$
 \begin{aligned}
 |A|_G^2
 =\sum_{i,j=1}^n A(e_i,e_j)^2&\ge\xi^2+\sum_{i=2}^n A(e_i,e_i)^2\\
 &\ge\xi^2+\frac1\beta
       \left(\sum_{i=2}^n A(e_i,e_i)\right)^2=\xi^2+\frac{(H-\xi)^2}{\beta}.
 \end{aligned}
$$
Substituting $H=K+a-\psi'\sqrt\alpha$, we obtain
\begin{equation}\label{eq:scalar-diagonal-data}
|A|_G^2
 \ge\xi^2+\frac{(K+a-\psi'\sqrt\alpha-\xi)^2}{\beta}.
\end{equation}
By $\omega=A(V,\cdot)$ and \eqref{eq:Ve1}, we have
\begin{equation}\label{eq:scalar-omega-component}
 \omega(e_1)=\sqrt{\frac{1-\alpha}{\alpha}}\,\xi.
\end{equation}
Moreover, $a=d\eta(V)$ implies
$$
 d\eta(e_1)=\sqrt{\frac{\alpha}{1-\alpha}}\,a.
$$
Using $d\varphi=d\eta+\psi' df$ and
$df(e_1)=\sqrt{1-\alpha}$, we get
\begin{equation}\label{eq:scalar-weight-component}
 d\varphi(e_1)
 =\sqrt{\frac{\alpha}{1-\alpha}}\,a
  +\psi'\sqrt{1-\alpha}.
\end{equation}

Since the basis is $G$-orthonormal,
$$
 \left|\omega+\frac1n d\varphi\right|_G^2
 \ge
 \left(\omega(e_1)+\frac1n d\varphi(e_1)\right)^2.
$$
Using \eqref{eq:scalar-diagonal-data},
\eqref{eq:scalar-omega-component}, and
\eqref{eq:scalar-weight-component} in
\eqref{eq:scalar-Q-definition}, we obtain
\[
 \begin{aligned}
 Q\ge{}&
 K^2+\xi^2
 +\frac{(K+a-\psi'\sqrt\alpha-\xi)^2}{\beta}
 -\frac{(a-\psi'\sqrt\alpha)^2}{n}\\
 &+\frac n\beta
 \left(
  \sqrt{\frac{1-\alpha}{\alpha}}\,\xi
  +\frac{a}{n}\sqrt{\frac{\alpha}{1-\alpha}}
  +\frac{\psi'}n\sqrt{1-\alpha}
 \right)^2.
 \end{aligned}
\]
Expanding the right-hand side as a quadratic expression
in $\xi$ gives
$$
 Q\ge
 \frac{n}{\beta\alpha}\xi^2
 -\frac2\beta\left(K-\frac{\psi'}{\sqrt\alpha}\right)\xi
 +\frac n\beta K^2+\frac{2Ka}{\beta}
 -\frac{2\psi'\sqrt\alpha}{\beta}K
 +\frac{a^2}{n\beta(1-\alpha)}
 +\frac{\psi'^2}{n\beta}.
$$
Completing the square and using
\eqref{eq:scalar-Qalpha}, we get
$$
 Q\ge
 \frac{n}{\beta\alpha}
 \left(
  \xi-\frac{\alpha K}{n}
      +\frac{\psi'\sqrt\alpha}{n}
 \right)^2
 +Q_\alpha-\frac{2\psi'\sqrt\alpha}{n}K.
$$
Discarding the square proves
\eqref{eq:scalar-Q-lower} wherever $df\ne0$.

At a critical point of $f$, we have
$$
 \alpha=1,\qquad V=\omega=a=0,\qquad H=K-\psi'.
$$
Using $|A|_G^2\ge H^2/n$ and discarding the nonnegative
last term in \eqref{eq:scalar-Q-definition}, we obtain
$$
 \begin{aligned}
 Q
 &\ge K^2+\frac{(K-\psi')^2}{n}-\frac{\psi'^2}{n}\\
 &=\frac{n+1}{n}K^2-\frac{2\psi'}nK=Q_1-\frac{2\psi'}nK.
 \end{aligned}
$$
Thus \eqref{eq:scalar-Q-lower} holds on $U$.

We next prove \eqref{eq:scalar-Qalpha-lower}.
For $0<\alpha<1$, the bounds
$|d\eta|_h\le\beta$ and $|V|_h^2=1-\alpha$ imply
$$
 a^2\le\beta^2(1-\alpha).
$$
By Young's inequality, we have
$$
 \frac{2|Ka|}{\beta}
 \le
 \frac{n(1-\alpha)}{2\beta}K^2
 +\frac{2a^2}{n\beta(1-\alpha)}.
$$
Expanding \eqref{eq:scalar-Qalpha} and applying these bounds,
we obtain
$$
 \begin{aligned}
 Q_\alpha
 &=\frac{n^2-\alpha}{n\beta}K^2
   +\frac{2Ka}{\beta}
   +\frac{a^2}{n\beta(1-\alpha)}\\
 &\ge
 \left(
  \frac n{2\beta}
  +\frac{\alpha(n^2-2)}{2n\beta}
 \right)K^2
 -\frac{a^2}{n\beta(1-\alpha)}\\
 &\ge\frac{n}{2\beta}K^2-\frac{\beta}{n}.
 \end{aligned}
$$
When $\alpha=1$, the same estimate follows from
$$
 Q_1=\frac{n+1}{n}K^2,
 \qquad
 \frac{n+1}{n}\ge\frac{n}{2\beta}.
$$
This proves \eqref{eq:scalar-Qalpha-lower} and completes the proof.
\end{proof}

We now combine \eqref{eq:scalar-Qalpha-lower} with
the prescribed weighted mean curvature equation.

\begin{lemma}\label{lem:scalar-horizontal}
On $U$, we have
\begin{equation}\label{eq:scalar-half-Q}
 2V(K)+\frac12Q_\alpha\ge-\frac{\delta}{4}.
\end{equation}
\end{lemma}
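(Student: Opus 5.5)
The plan is to compute $V(K)$ explicitly from the equation $K=b_\psi(x,f(x))$ and to observe that its only possibly negative part is a multiple of $a=d\eta(V)$. The nonnegative quantity $Q_\alpha$ contains $a$ through the square $\bigl(a+n(1-\alpha)K\bigr)^2$, which should absorb that negative part. By the chain rule and \eqref{eq:profile-derivatives}, together with $V(f)=W(1-\alpha)$ and $K=b_\psi$ along the graph,
\[
 V(K)=d_xb_\psi(V)+(b_\psi)_t\,V(f)
 =\mu(1+K^2)\Bigl(-d\rho(V)+\frac{\psi' W(1-\alpha)}{\beta}\Bigr).
\]
The second term is nonnegative. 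Since $\eta=-\beta\rho$, we have $-d\rho(V)=a/\beta$, so
\[
 2V(K)\ge \frac{2\mu}{\beta}(1+K^2)\,a .
\]
We also have the constraint $|a|\le\beta|V|_h=\beta\sqrt{1-\alpha}$ from \eqref{eq:background-gradient}.

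If $a\ge0$, or if $\alpha=1$ (where $a=0$), then \eqref{eq:scalar-half-Q} is immediate because $Q_\alpha\ge0$. So the work is for $a<0$ and $0<\alpha<1$. There I would split the bound into a constant part and a $K^2$ part:
\[
 \frac{2\mu}{\beta}(1+K^2)a\;\ge\;-2\mu\sqrt{1-\alpha}\;+\;\frac{2\mu}{\beta}K^2a .
\]
The constant part is harmless, since \eqref{eq:mu-choice} gives $2\mu\le \delta/4$. It then remains to show
\[
 \frac{2\mu}{\beta}K^2a+\frac12Q_\alpha\ge 0
 \qquad\text{for } -\beta\sqrt{1-\alpha}\le a<0 ,
\]
possibly after spending part of the slack left in $\delta/4$; the precise form of \eqref{eq:mu-choice}, with the factor $1+2\beta^2n^{-2}$, suggests the constant budget is split exactly this way.

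For this last inequality I would treat $\tfrac12 Q_\alpha+\tfrac{2\mu}{\beta}K^2a$ as a convex quadratic in $a$ on the allowed interval and minimize it. The negative perturbation is at most $2\mu K^2\sqrt{1-\alpha}$ in size. By \eqref{eq:scalar-Qalpha-lower}, $\tfrac12 Q_\alpha$ supplies $\tfrac{n}{4\beta}K^2$, and $\mu\le n/(8\beta)$ gives $2\mu\le \tfrac{n}{4\beta}$, so the $K^2$ coefficients match exactly. The catch is that \eqref{eq:scalar-Qalpha-lower} also produces a leftover constant $-\beta/(2n)$, which $\delta$ need not dominate. The main obstacle is therefore to avoid the crude Young step from Lemma~\ref{lem:scalar-Q}.

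My first attempt would keep the square in $Q_\alpha$ and write
\[
 Q_\alpha=\frac{n+1}{n}\alpha K^2+\frac{(a+n(1-\alpha)K)^2}{n\beta(1-\alpha)} .
\]
Then I would split into cases according to the sign of $K$ and the size of $|a|$ relative to $n(1-\alpha)|K|$. When the square is small, $a\approx -n(1-\alpha)K$, so $|a|\lesssim |K|$. In that case the term $\tfrac{2\mu}{\beta}K^2|a|$ must be paired with the square (large deviation) or with $\alpha K^2$, using $|a|\le\beta\sqrt{1-\alpha}$. If a pure $K^2$ term is needed to close, I would borrow it from the first display via the positive term $\mu(1+K^2)\psi'W(1-\alpha)/\beta$, which I had discarded. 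Any bounded constant produced in this pairing would be charged to the remaining part of $\delta/4$ allowed by \eqref{eq:mu-choice}.
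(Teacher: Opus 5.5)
\textbf{Gap: the key step is not proved.} Your reduction is correct. With $a=-\beta\,d\rho(V)$ you get $2V(K)\ge\tfrac{2\mu}{\beta}(1+K^2)a$, and $|a|\le\beta\sqrt{1-\alpha}$ turns this into $2V(K)\ge-2\mu(1+K^2)$, which is exactly the paper's first step. But the argument stops where the real work begins. You never prove a bound of the form $\tfrac12Q_\alpha\ge 2\mu K^2-C\mu$.

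The replacement you sketch is not carried out, and part of it cannot work. You propose a case analysis on the sign of $K$ and the size of the square, and borrowing a $K^2$ term from $\mu(1+K^2)\psi'W(1-\alpha)/\beta$. That term has no uniform positive lower bound: it vanishes at critical points of $f$, and $\psi'$ may be arbitrarily small. Also, your target $\tfrac{2\mu}{\beta}K^2a+\tfrac12Q_\alpha\ge0$ is false with right-hand side $0$. Taking $a=-n(1-\alpha)K$, $K=\beta/(n\sqrt{1-\alpha})$ and letting $\alpha\to0$ gives roughly $-2\mu\beta^2/n^2$. So a nonzero constant must be spent.

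\textbf{Missing idea.} You do not need to avoid \eqref{eq:scalar-Qalpha-lower}; apply it only to a $\mu$-small multiple of $Q_\alpha$. Since $Q_\alpha\ge0$, and $\mu\le n/(8\beta)$ gives $4\mu\beta/n\le\tfrac12$,
\[
 \tfrac12Q_\alpha\ \ge\ \tfrac{4\mu\beta}{n}Q_\alpha\ \ge\ \tfrac{4\mu\beta}{n}\Bigl(\tfrac{n}{2\beta}K^2-\tfrac{\beta}{n}\Bigr)=2\mu K^2-\tfrac{4\mu\beta^2}{n^2}.
\]
Rescaling the inequality shrinks its bad constant $-\beta/n$ to $-4\mu\beta^2/n^2$, which is $O(\mu)$. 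Adding this to $2V(K)\ge-2\mu(1+K^2)$ cancels the $K^2$ terms. What remains is $-2\mu(1+2\beta^2n^{-2})\ge-\delta/4$, by the second part of \eqref{eq:mu-choice}.

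This is the paper's argument, and it accounts for the factor $1+2\beta^2n^{-2}$ you noticed. The leftover $-\beta/(2n)$ you flagged is a problem only because you used the full $\tfrac12Q_\alpha$. No case analysis and no finer use of the square in $Q_\alpha$ is needed.
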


\begin{proof}
Differentiating $K=b_\psi(x,f(x))$ along $V$ and using
\eqref{eq:profile-derivatives} together with
$V(f)=W(1-\alpha)$, we obtain
\begin{equation*}\label{eq:scalar-profile-derivative}
 2V(K)
 =2(1+K^2)
 \left(
  \frac{\mu}{\beta}\psi'W(1-\alpha)-\mu V(\rho)
 \right).
\end{equation*}
Since $\psi'>0$ and
$$
 |V(\rho)|\le|d\rho|_h|V|_h\le1,
$$
it follows that
$$
 2V(K)\ge-2\mu(1+K^2).
$$
On the other hand, by \eqref{eq:mu-choice} we know
$4\mu\beta/n\le1/2$. By $Q_\alpha\ge0$ and
\eqref{eq:scalar-Qalpha-lower}, we have
$$
 \frac12Q_\alpha
 \ge\frac{4\mu\beta}{n}Q_\alpha
 \ge2\mu K^2-\frac{4\mu\beta^2}{n^2}.
$$
Adding these inequalities and using \eqref{eq:mu-choice}
once more, we conclude that
$$
 2V(K)+\frac12Q_\alpha
 \ge-2\mu\left(1+\frac{2\beta^2}{n^2}\right)
 \ge-\frac{\delta}{4}.
$$
\end{proof}
Now we are ready to prove Proposition \ref{prop:scalar}.
\begin{proof}[Proof of Proposition \ref{prop:scalar}]
For every $0<\alpha\le1$, by \eqref{eq:scalar-Qalpha}, we have
$$
 \begin{aligned}
 \frac12Q_\alpha-\frac{2\psi'\sqrt\alpha}{n}K
 &\ge
 \frac{n+1}{2n}\alpha K^2
 -\frac{2\psi'\sqrt\alpha}{n}K\\
 &=
 \frac{n+1}{2n}
 \left(\sqrt\alpha K-\frac{2\psi'}{n+1}\right)^2
 -\frac{2\psi'^2}{n(n+1)}\\
 &\ge-\frac{2\psi'^2}{n(n+1)}.
 \end{aligned}
$$
Combining this estimate with
\eqref{eq:scalar-Q-lower} and \eqref{eq:scalar-half-Q},
we get
\begin{equation}\label{eq:scalar-before-absorption}
 2V(K)+Q
 \ge-\frac{\delta}{4}
     -\frac{2\psi'^2}{n(n+1)}.
\end{equation}

Pulling back \eqref{eq:ambient-weight-change} via $\Gamma_f$
and substituting the result into \eqref{eq:scalar-Q-identity},
we obtain
$$
 \mathcal R_n(G,\theta)
 =\mathcal R_{n+1}(h,\eta)
  +2V(K)+Q-\frac{\beta}{n}\psi'^2.
$$
Using $\mathcal R_{n+1}(h,\eta)\ge\delta$ from
Lemma~\ref{lem:background},
\eqref{eq:scalar-before-absorption}, and $\beta=n-1$, we have
$$
 \begin{aligned}
 \mathcal R_n(G,\theta)
 &\ge\frac{3\delta}{4}
   -\left(\frac{\beta}{n}
          +\frac{2}{n(n+1)}\right)\psi'^2\\
 &=\frac{3\delta}{4}
   -\frac{n^2+1}{n(n+1)}\psi'^2.
 \end{aligned}
$$
This proves \eqref{eq:scalarbound}.
\end{proof}

\section{Entire graphs in the ambient band}
\label{sec:existence}
We first establish local regularity and existence on closed Riemannian
manifolds with arbitrary smooth background data. We then apply these
results to the fixed background data $(M,h,\rho,\eta)$ and  the associated
constant $\delta>0$ from
Lemma \ref{lem:background}, with $\mu$ chosen as in
\eqref{eq:mu-choice}, to construct an entire graph on $M$.

Let $(N^n,g_N)$ be a smooth Riemannian manifold, with $n\ge3$,
and let $\eta_N\in C^\infty(N)$. Fix $\mu>0$ and $\psi\in\mathscr P$ satisfying
\begin{equation}\label{eq:exist-potential-monotonicity}
 |(\log\psi')'|\le\frac{\mu}{2\beta}
 \qquad\text{on }\R.
\end{equation}
Let $\pi_N:N\times\R\to N$ be the projection, and equip
$N\times\R$ with the product metric $\pi_N^*g_N+dt^2$. Set
$$ \Phi_{\psi,N}(x,t)=\eta_N(x)+\psi(t),$$
and define
$$\mathscr S_{\psi,N}=\left\{(x,t)\in N\times\R:
 |\Phi_{\psi,N}(x,t)|<\frac{\beta\pi}{2\mu}\right\}.$$
On this band, set
$$b_{\psi,N}(x,t)=\tan\left(\frac{\mu}{\beta}\Phi_{\psi,N}(x,t)\right).$$
For a graph function $u$, let $\mathcal K_N(u)$ denote the weighted
mean curvature of its graph with respect to the downward unit
normal. Explicitly,
\begin{equation}\label{eq:exist-curvature-operator}
 \mathcal K_N(u)
 =\Div_{g_N,\eta_N}\left(
   \frac{\nabla_{g_N}u}{\sqrt{1+|du|_{g_N}^2}}\right)
  +\frac{\psi'(u)}{\sqrt{1+|du|_{g_N}^2}}.
\end{equation}
By \eqref{eq:exist-potential-monotonicity},  for every $(x,t)\in\mathscr S_{\psi,N}$ and $\xi\in T_x^*N$ we have
\begin{equation}\label{eq:exist-effective-monotonicity}
 \begin{aligned}
 \partial_t\left(
 b_{\psi,N}(x,t)-\frac{\psi'(t)}{\sqrt{1+|\xi|_{g_N}^2}}
 \right)&=
 \frac{\mu}{\beta}\psi'(t)\bigl(1+b_{\psi,N}(x,t)^2\bigr)
 -\frac{\psi''(t)}{\sqrt{1+|\xi|_{g_N}^2}}\\
 &\ge\frac{\mu}{2\beta}\psi'(t)>0.
 \end{aligned}
\end{equation}
Norms of functions on $N$ and $N\times\R$ are computed using
$g_N$ and $\pi_N^*g_N+dt^2$, respectively.

The following lemma provides the local regularity and compactness needed in the existence argument.
\begin{lemma}\label{lem:elliptic}
Let $U\Subset N$ be a coordinate domain, and fix $0<\tau<1$.
Let $\mathscr O\subset N\times\R$ be open, and suppose that
$b_m\in C^\infty(\mathscr O)$ and
$f_m\in C^{2,\tau}_{\mathrm{loc}}(U)$ satisfy
\begin{equation}\label{eq:elliptic-pde}
 \mathcal K_N(f_m)=b_m(x,f_m(x))
 \qquad\text{on }U.
\end{equation}
Assume that all graphs $\Gamma_{f_m}(U)$ lie in a fixed compact
set $\mathscr E\Subset\mathscr O$, 
$\sup_m\|df_m\|_{C^0(U,g_N)}<\infty$, and  $b_m$ and all
their derivatives are uniformly bounded on a fixed open neighborhood
$\mathscr O_0\Subset\mathscr O$ of $\mathscr E$. Then, every $f_m$ is smooth, and for every open set $U'\Subset U$
and integer $\ell\ge0$,
$$
 \sup_m\|f_m\|_{C^\ell(U')}\le C_\ell<\infty.
$$
Here $C_\ell$ depends only on $\ell$, $U'$, $U$, $g_N$, $\eta_N$, $\psi$,
$\mathscr E$, $\mathscr O_0$,
$\sup_m\|df_m\|_{C^0(U, g_N)}$, and
$\sup_m\|b_m\|_{C^{\ell+1}(\mathscr O_0)}$.
In particular, $\{f_m\}$ has a subsequence converging in
$C^\infty_{\mathrm{loc}}(U)$ to a smooth function.
\end{lemma}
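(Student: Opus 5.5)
The plan is to write the equation in local coordinates as a quasilinear elliptic equation in divergence form, obtain a H\"older gradient bound from De Giorgi--Nash--Moser, and then bootstrap with Schauder theory.

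\textbf{Structure of the operator.} In coordinates on $U$, the equation $\mathcal K_N(u)=b_m(x,u)$ reads
\[
 e^{\eta_N}\frac{1}{\sqrt{\det g_N}}\,\partial_i\Bigl(e^{-\eta_N}\sqrt{\det g_N}\,\frac{g_N^{ij}\partial_ju}{\sqrt{1+|du|^2_{g_N}}}\Bigr)
 =b_m(x,u)-\frac{\psi'(u)}{\sqrt{1+|du|_{g_N}^2}}.
\]
This is a quasilinear divergence-form operator $\partial_i\bigl(a^i(x,du)\bigr)$ with smooth coefficients. We set $L=\sup_m\|df_m\|_{C^0}$. The matrix $\partial a^i/\partial p_j$ is uniformly elliptic, with ellipticity constants controlled by $L$ and by $g_N$ on $\overline U$. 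Because the graphs lie in the compact set $\mathscr E$, the values $f_m(x)$ stay in a bounded $t$-range. Hence $\psi'(f_m)$ and its derivatives are bounded in terms of $\psi$ and $\mathscr E$, and $b_m(x,f_m)$ is bounded in terms of $\|b_m\|_{C^0(\mathscr O_0)}$. We may take $U$ to have compact closure in a slightly larger chart, so all coefficient bounds are uniform on $U$.

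\textbf{Step 1: H\"older gradient bound.} Since $f_m\in C^{2,\tau}_{\mathrm{loc}}$, we may differentiate the equation in $x_k$. Then $w=\partial_kf_m$ solves the linear divergence-form equation
\[
 \partial_i\bigl(a^{ij}(x)\partial_jw+c^i(x)\bigr)=F(x).
\]
Here $a^{ij}=\partial a^i/\partial p_j(x,df_m)$ is bounded measurable and uniformly elliptic, $c^i=\partial_{x_k}a^i(x,df_m)$ is bounded, and $F$ is the $x_k$-derivative of the right-hand side. The term $F$ contains second derivatives of $f_m$. To keep everything in divergence form, we absorb it: write the right-hand side as $\partial_k\bigl(\text{bounded}\bigr)$ in the weak formulation, i.e.\ test against $\partial_k\phi$. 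The bounded quantity is controlled by $\|b_m\|_{C^1}$, $\psi$ and $L$. De Giorgi--Nash--Moser then gives $\|df_m\|_{C^{\gamma}(U'')}\le C$ on intermediate domains $U'\Subset U''\Subset U$, for some $\gamma>0$ depending only on the ellipticity data. This is the standard route of Gilbarg--Trudinger, Theorem~13.1 and Chapter~13, and it is the step I expect to be the main obstacle. The difficulty is only in making the dependence of the constants explicit.

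\textbf{Step 2: Schauder bootstrap.} Once $df_m\in C^{\gamma}$, the nondivergence form $a^{ij}(x,df_m)\partial_{ij}f_m=\text{lower order}$ has $C^{\gamma}$ coefficients and a $C^\gamma$ right-hand side. Interior Schauder estimates give $f_m\in C^{2,\gamma}$ with uniform bounds. Differentiating the equation repeatedly, each derivative $\partial^\alpha f_m$ satisfies a linear equation whose coefficients are already controlled in $C^{k,\gamma}$. The right-hand side involves $\partial^\alpha\bigl(b_m(x,f_m)\bigr)$, which uses $\|b_m\|_{C^{|\alpha|}(\mathscr O_0)}$. This yields $C^{\ell,\gamma}$ bounds on shrinking subdomains, with the stated dependence; the index bookkeeping shows $\|b_m\|_{C^{\ell+1}}$ suffices for $C^\ell$ bounds. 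Applying this at every order shows each $f_m$ is smooth.

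\textbf{Step 3: compactness.} Apply Arzel\`a--Ascoli on an exhaustion of $U$ by subdomains $U'$, and pass to a diagonal subsequence. This gives convergence in $C^\infty_{\mathrm{loc}}(U)$ to a smooth limit.
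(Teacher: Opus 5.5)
Your proposal is correct and follows essentially the same route as the paper. Both arguments put the equation in divergence form, get uniform ellipticity from the gradient bound, obtain a H\"older gradient estimate via Gilbarg--Trudinger Theorem~13.1 (whose proof is exactly your differentiate-and-apply-De Giorgi--Nash argument), then run a Schauder bootstrap and a diagonal Arzel\`a--Ascoli argument. The one difference is that the paper cuts off and extends $b_m$ to all of $N\times\R$ so it can cite Theorem~13.1 verbatim, while your inline argument only evaluates $b_m$ along graphs in $\mathscr E$ and needs no extension.
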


\begin{proof}
Since all graphs lie in the compact set $\mathscr E$,
$$
 \sup_m\|f_m\|_{C^0(U)}
 \le\max_{(x,t)\in\mathscr E}|t|<\infty.
$$
Together with the assumed gradient bound, this gives a constant
$L<\infty$ such that
$$
 \|f_m\|_{C^0(U)}+\|df_m\|_{C^0(U, g_N)}\le L
 \qquad\text{for every }m.
$$

Fix $U'\Subset U$ and choose open sets
$U'\Subset U_1\Subset U_0\Subset U$.
Let $D$ denote differentiation in the fixed coordinates on $U$. On $\overline{U_0}$, the $g_N$-norm and the
Euclidean norm are uniformly equivalent.

Choose
$\zeta\in C_c^\infty(\mathscr O_0)$ with $0\le\zeta\le1$,
equal to one near $\mathscr E$.
Let $\widetilde b_m\in C^\infty(N\times\R)$ be the extension
of $\zeta b_m$ by zero.
Since $\widetilde b_m=b_m$ near $\mathscr E$,
\eqref{eq:elliptic-pde} holds with $\widetilde b_m$ in place of $b_m$.
For every integer $k\ge0$,
\begin{equation}\label{eq:tildebm}
 \sup_m\|\widetilde b_m\|_{C^k(N\times\R)}
 \le C_k\sup_m\|b_m\|_{C^k(\mathscr O_0)},
\end{equation}
where $C_k$ depends only on $k$ and the fixed cutoff function.

Let $m_{\eta_N}=e^{-\eta_N}\sqrt{\det((g_N)_{ij})}$ be the coordinate
density of $e^{-\eta_N}dV_{g_N}$.
For independent variables $t\in\R$ and
$\xi=(\xi_1,\ldots,\xi_n)\in\R^n$, define
$$
 J^i(x,\xi)
 =\frac{m_{\eta_N}(x)g_N^{ik}(x)\xi_k}
       {\sqrt{1+g_N^{pq}(x)\xi_p\xi_q}}
$$
and
$$
T_m(x,t,\xi)
 =m_{\eta_N}(x)\left(
 \widetilde b_m(x,t)-\frac{\psi'(t)}
 {\sqrt{1+g_N^{pq}(x)\xi_p\xi_q}}\right).
$$
The coordinate expression for $\Div_{g_N,\eta_N}$ turns
\eqref{eq:elliptic-pde} into
\begin{equation}\label{eq:elliptic-divergence}
 \partial_iJ^i(x,Df_m)-T_m(x,f_m,Df_m)=0.
\end{equation}
Set $W_m=\sqrt{1+|df_m|_{g_N}^2}$ and write
$f_m^i=g_N^{ij}\partial_jf_m$.
Differentiation with respect to $\xi_k$ gives
\begin{equation}\label{eq:exist-principal}
 C_m^{ik}
 :=\frac{\partial J^i}{\partial\xi_k}(x,Df_m)
 =\frac{m_{\eta_N}}{W_m}
  \left(g_N^{ik}-\frac{f_m^if_m^k}{W_m^2}\right).
\end{equation}
For every $\xi\in\R^n$,
$$
  \frac{m_{\eta_N}}{W_m^3}|\xi|_{g_N}^2
 \le C_m^{ik}\xi_i\xi_k
 \le\frac{m_{\eta_N}}{W_m}|\xi|_{g_N}^2.
$$
Expanding \eqref{eq:elliptic-divergence}, we obtain
\begin{equation}\label{eq:elliptic-nondivergence}
 C_m^{ik}\partial_i\partial_kf_m
 =T_m(x,f_m,Df_m)-\sum_iJ^i_{x^i}(x,Df_m).
\end{equation}
Here $J^i_{x^i}$ denotes differentiation with respect to $x^i$
with $\xi$ fixed. Thus $(C_m^{ik})$ is the principal coefficient
matrix of \eqref{eq:elliptic-divergence}.
Since $1\le W_m\le\sqrt{1+L^2}$ and $m_{\eta_N}$ is bounded above
and away from zero on $\overline{U_0}$, the preceding inequalities
and the uniform equivalence of the $g_N$-norms and Euclidean norms show
that the equation is uniformly elliptic on $U_0$, with ellipticity
constants independent of $m$.

The bounds for $f_m$ and $Df_m$ place $(f_m,Df_m)$ in a fixed
bounded subset of $\R\times\R^n$. For $x\in U_0$ and $(t,\xi)$ in this
set, the smoothness of $g_N$, $\eta_N$, and $\psi$, together with
\eqref{eq:tildebm}, gives uniform bounds for $J^i$, $T_m$, and their
derivatives. These bounds and the uniform ellipticity of
$C_m^{ik}$ allow us to apply the interior H\"older estimate of the gradient \cite[Theorem 13.1]{GT}, which gives
$$
 \sup_m\|f_m\|_{C^{1,\tau_0}(U_1)}\le C
$$
for some $\tau_0\in(0,1)$ and $C<\infty$, both independent of $m$.

The coefficients $C_m^{ik}$ and the right hand side of
\eqref{eq:elliptic-nondivergence} are consequently uniformly
bounded in $C^{0,\tau_0}(U_1)$.
Interior Schauder estimates give uniform $C^{2,\tau_0}$ bounds
for $f_m$ on relatively compact subdomains of $U_1$.
These $C^{2,\tau_0}$ estimates give uniform $C^{1,\tau_0}$
bounds for the coefficients and the right hand side of
\eqref{eq:elliptic-nondivergence} on the same subdomains.
A standard bootstrap argument on nested subdomains containing $\overline{U'}$ then yields, for every integer $\ell\ge0$,
$$
 \sup_m\|f_m\|_{C^\ell(U')}\le C_\ell.
$$
Together with \eqref{eq:tildebm}, the
preceding elliptic estimates show that $C_\ell$ depends only on
$\ell$, $U'$, $U$, $\mathscr E$, $\mathscr O_0$, the fixed data $g_N$, $\eta_N$, $\psi$,
and the bounds $\sup_m\|df_m\|_{C^0(U, g_N)}$ and
$\sup_m\|b_m\|_{C^{\ell+1}(\mathscr O_0)}$,
and is independent of $m$.

Finally, since $U'$ and $\ell$ are arbitrary, every $f_m$ is smooth.
By the Arzel\`a--Ascoli theorem and a diagonal argument,
$\{f_m\}$ has a subsequence converging in
$C^\infty_{\mathrm{loc}}(U)$ to a smooth function.
\end{proof}

We next solve the prescribed weighted mean curvature equation on a closed Riemannian manifold.
\begin{proposition}\label{prop:closed}
Let $(N,g_N)$ be a closed smooth Riemannian manifold of dimension $n\ge 3$,
and let $\eta_N\in C^\infty(N)$.
Fix $\mu>0$ and $\psi\in\mathscr P$ satisfying
\eqref{eq:exist-potential-monotonicity}, and define
$\Phi_{\psi,N}$, $\mathscr S_{\psi,N}$, $b_{\psi,N}$, and $\mathcal K_N$ as above.
Then there exists $f\in C^\infty(N)$ satisfying
\begin{equation}\label{eq:exactpde}
\mathcal K_N(f)=b_{\psi,N}(x,f(x))
 \qquad\text{on }N
\end{equation}
and
\begin{equation}\label{eq:exist-strip}
 \Gamma_f(N)\subset\mathscr S_{\psi,N}.
\end{equation}
\end{proposition}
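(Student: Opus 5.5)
We will use the method of continuity in a parameter $\sigma\in[0,1]$ that deforms the prescribed function. The target equation is $\mathcal K_N(f)=b_{\psi,N}(x,f)$. A convenient family replaces $\eta_N$ by $\sigma\eta_N$ throughout, in the weight of the divergence, in $\Phi_{\psi,N}$ and in the band. The operator then becomes
\[
\mathcal K_N^\sigma(u)=\Div_{g_N,\sigma\eta_N}\bigl(\nabla u/W\bigr)+\psi'(u)/W,
\]
with right-hand side $b^\sigma=\tan\bigl(\tfrac{\mu}{\beta}(\sigma\eta_N+\psi(t))\bigr)$ on the band $|\sigma\eta_N+\psi(t)|<\beta\pi/(2\mu)$. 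At $\sigma=0$ the constant $f\equiv t_0$ solves the equation whenever $t_0$ satisfies $\psi'(t_0)=\tan(\mu\psi(t_0)/\beta)$. The function $t\mapsto\tan(\mu\psi(t)/\beta)-\psi'(t)$ is strictly increasing by \eqref{eq:exist-effective-monotonicity} with $\xi=0$. It tends to $\mp\infty$ at the ends of the interval $|\psi|<\beta\pi/(2\mu)$, because $\psi'\le1$. Hence $t_0$ exists.

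Let $I$ be the set of $\sigma\in[0,1]$ for which a smooth solution with graph in the $\sigma$-band exists. We show $I$ is open, closed and nonempty.

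\emph{Openness.} The linearization of $u\mapsto\mathcal K_N^\sigma(u)-b^\sigma(x,u)$ at a solution $f$ has the form
\[
Lv=\Div_{g_N,\sigma\eta_N}\bigl(C\,\nabla v\bigr)+(\text{first-order terms in } dv)-c\,v,
\]
where $C$ is the positive definite matrix of \eqref{eq:exist-principal}. The zeroth-order coefficient is
\[
c=\partial_t\Bigl(b^\sigma-\frac{\psi'(t)}{W}\Bigr)\Big|_{t=f}\ge\frac{\mu}{2\beta}\psi'>0,
\]
by \eqref{eq:exist-effective-monotonicity}; that estimate uses only \eqref{eq:exist-potential-monotonicity} and is unaffected by $\sigma$. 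By the maximum principle on the closed manifold $N$, $L$ is injective on $C^{2,\tau}(N)$. By Fredholm theory with index zero, $L$ is an isomorphism $C^{2,\tau}\to C^{0,\tau}$. The implicit function theorem then gives solutions for nearby $\sigma$. The band condition, being open, persists for $C^0$-small perturbations.

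\emph{Closedness.} This is the main step and needs a priori bounds that are uniform in $\sigma$.

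\textbf{(a) $C^0$ and band separation.} At a maximum point $x_1$ of $f$ we have $df=0$ and $\Hess f\le0$, so $\mathcal K^\sigma(f)\le\psi'(f)$. The equation then gives
\[
\tan\bigl(\tfrac{\mu}{\beta}(\sigma\eta_N(x_1)+\psi(f(x_1)))\bigr)\le\psi'(f(x_1))\le1,
\]
which bounds $\sigma\eta_N+\psi(f)$ at $x_1$ strictly away from the upper band edge. Hence $\psi(f)\le\beta\arctan(1)/\mu+\max|\eta_N|$ everywhere. The same argument at a minimum point, with $\psi'>0$, gives $\tan(\cdots)\ge0$ there, so $\psi(f)\ge-\max|\eta_N|$. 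Together these confine $\Phi^\sigma$ to $[-2\max|\eta_N|,\ \beta\pi/(4\mu)+2\max|\eta_N|]$. This is not automatically inside the band when $\eta_N$ is large, which is a genuine concern.

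To fix this I would instead deform the band scale. Put $\eta_N^\sigma=\sigma\eta_N$ but keep the exact band of the target, $|\Phi|<\beta\pi/(2\mu)$; the max-point argument must then be refined. At the maximum, $\tan(\frac{\mu}{\beta}\Phi)\le1$ gives $\Phi(x_1,f(x_1))\le\beta\pi/(4\mu)$. At another point $x$, $\Phi(x,f(x))\le\psi(f(x_1))+\sigma\eta_N(x)$, which can exceed the band edge. So the max principle should be applied to $\Phi=\sigma\eta_N+\psi(f)$ itself rather than to $f$. At a maximum of $\Phi$ along the graph, we have $d\psi(f)=-\sigma\,d\eta_N$, and the weighted Laplacian formulas \eqref{eq:weighted-lapf} and \eqref{eq:lapz}, with the $\Hess\eta_N$ term bounded by a constant depending on $N$, give $\tan(\frac{\mu}{\beta}\Phi)\le C_N$ at that point. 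This keeps $\Phi$ uniformly inside the band, since $\Phi$ is maximized there; a symmetric bound holds at the minimum.

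\textbf{(b) Gradient.} Proposition~\ref{prop:gradient} applies with $E=U=N$ and no cutoff, using Lemma~\ref{lem:profile}, whose proof goes through with $d\eta_N$ bounded on compact $N$ in place of $d\rho$. It bounds $\psi'(f)W$, and since $f$ ranges in a compact interval where $\psi'$ is bounded below, this bounds $W$.

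\textbf{(c) Higher regularity.} Lemma~\ref{lem:elliptic} gives $C^\infty$ bounds and a convergent subsequence, so $I$ is closed. At $\sigma=1$ we obtain \eqref{eq:exactpde} and \eqref{eq:exist-strip}.

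The obstacle I expect is step (a): obtaining band separation uniform in $\sigma$ when $\eta_N$ has large oscillation. The fix is to run the maximum principle on $\Phi$ rather than on $f$, with the extra gradient terms controlled in the same way as in \eqref{eq:gradientidentity}.
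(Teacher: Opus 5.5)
Your proposal is correct, but it takes a different route from the paper.

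\textbf{What the paper does.} It keeps the operator $\mathcal K_N$ and the band fixed, and deforms only the right-hand side:
\[
 b_{\lambda,N}=b_{\psi,N}+(1-\lambda)\mathcal K_N(f_0),\qquad f_0=\psi^{-1}(-\eta_N),
\]
so $f_0$ solves the $\lambda=0$ problem by construction. Uniform separation from the band edge comes from comparison with the explicit barriers $f_\pm=\psi^{-1}\bigl(-\eta_N\pm\frac\beta\mu(\frac\pi2-\varepsilon)\bigr)$. Their curvature $\mathcal K_N(f_\pm)$ is bounded independently of $\varepsilon$, while $b_{\psi,N}(x,f_\pm)=\pm\cot\varepsilon$. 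Openness comes from writing the linearization in divergence form with weight $\eta_N+\psi(u)$ and applying Lax--Milgram.

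\textbf{What you do instead.} You scale the weight $\eta_N\mapsto\sigma\eta_N$ and start from a constant solution obtained by a one-variable intermediate value argument. You get invertibility from the maximum principle plus Fredholm index zero. You obtain band separation by applying the maximum principle directly to $v=\sigma\eta_N+\psi(f)$ along the graph.

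\textbf{Why your step (a) works, and what it needs.} At a critical point of $v$ one has $\sigma\,d\eta_N=-\psi'(f)\,df$. Then \eqref{eq:weighted-lappsi} and \eqref{eq:weighted-lapf}, with weight $\sigma\eta_N$, give
\[
 \Delta_{G,\varphi}v=\psi'(f)W\,K-\psi'(f)^2+\psi''(f)(1-W^{-2})
 +\sigma\tr_G\Hess_{g_N}\eta_N-\sigma^2|d\eta_N|_{g_N}^2 .
\]
Hence at a maximum
\[
 \psi'(f)W\,b^\sigma\le 2+n\|\Hess\eta_N\|_\infty+\|d\eta_N\|_\infty^2,
\]
and a symmetric bound holds at a minimum.
\begin{itemize}
\item This controls $\psi'(f)W\tan(\frac\mu\beta v)$, not $\tan(\frac\mu\beta v)$ itself. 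To conclude you need $\psi'(f)\ge c>0$ at that point. This follows because the band condition confines $f$ to the fixed interval where $|\psi|<\frac{\beta\pi}{2\mu}+\|\eta_N\|_\infty$, the same fact you use in (b). You should state it explicitly.
\item The band throughout must be the $\sigma$-band $|\sigma\eta_N+\psi(t)|<\frac{\beta\pi}{2\mu}$, on which $b^\sigma$ is defined. Your remark about deforming the band scale while keeping ``the exact band of the target'' is inconsistent with this and should be dropped.
\end{itemize}

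\textbf{Trade-offs.} In your path the operator itself varies with $\sigma$ through the drift $-\sigma\,d\eta_N(\nabla u)/W$, which is not of the form $b(x,u)$, and the bands move with $\sigma$. So Lemma~\ref{lem:elliptic} cannot be quoted verbatim. It has to be rerun with this term in the lower-order part, uniformly for $\sigma\in[0,1]$ and on a common neighbourhood of the graphs as $\sigma_m\to\sigma_\infty$. You also need to check that the constant in Proposition~\ref{prop:gradient} is uniform in $\sigma$. It is, since $\sigma\eta_N$ is bounded in $C^2$ and Lemma~\ref{lem:profile} holds with $\Lambda$ depending on $\|d\eta_N\|_\infty$.

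The paper's right-hand-side homotopy avoids all these adaptations, so the existing lemmas apply as stated. Your route has a more elementary starting point and an intrinsic $C^0$ estimate that needs no choice of barrier level.
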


\begin{proof}
Fix $0<\tau<1$ and set $f_0=\psi^{-1}(-\eta_N).$
Then $\Phi_{\psi,N}(x,f_0(x))=0$ and $b_{\psi,N}(x,f_0(x))=0$.
For $\lambda\in\R$, define
$$
 b_{\lambda,N}(x,t)
 =b_{\psi,N}(x,t)+(1-\lambda)\mathcal K_N(f_0)(x)
 \qquad\text{on }\mathscr S_{\psi,N}.
$$
Consider
$$
\mathcal U
 =\{u\in C^{2,\tau}(N):
           \Gamma_u(N)\subset\mathscr S_{\psi,N}\}.
$$
Since $N$ is compact, $\mathcal U$ is open in $C^{2,\tau}(N)$.
Define $\cQ_\lambda:\mathcal U\to C^{0,\tau}(N)$ by
\begin{equation}\label{eq:closed-homotopy}
 \cQ_\lambda(u)=\mathcal K_N(u)-b_{\lambda,N}(x,u(x)),
\end{equation}
and let
$$
 \mathcal I
 =\{\lambda\in[0,1]:
       \cQ_\lambda(u)=0\text{ for some }u\in\mathcal U\}.
$$
We have $f_0\in\mathcal U$ and $\cQ_0(f_0)=0$, so
$0\in\mathcal I$.

We first establish a priori estimates for all solutions of
$\cQ_\lambda(u)=0$, with constants independent of $\lambda$.
For $0<\varepsilon\le\pi/4$, set
\begin{equation}\label{eq:closedbarriers}
 f_\pm
 =\psi^{-1}\left(
    -\eta_N\pm\frac{\beta}{\mu}(\frac\pi 2-\varepsilon)
   \right).
\end{equation}
Since $\eta_N$ is bounded on $N$, the values
$-\eta_N(x)\pm\frac{\beta}{\mu}(\frac\pi 2-\varepsilon)$
lie in a fixed compact interval for all $x\in N$ and $0<\varepsilon\le\pi/4$.
Since $\eta_N$ and $\psi^{-1}$ are smooth and $N$ is compact,
the functions $f_\pm$ have $C^2(N)$ bounds independent of
$\varepsilon$. It follows from
\eqref{eq:exist-curvature-operator} that
$$
 \max\{\|\mathcal K_N(f_-)\|_{C^0(N)},
        \|\mathcal K_N(f_+)\|_{C^0(N)}\}
 \le C_{\mathrm{bar}},
$$
where $C_{\mathrm{bar}}$ is independent of $\varepsilon$.
Choose and fix $\varepsilon>0$ sufficiently small that
$$
 \cot\varepsilon>
 C_{\mathrm{bar}}+\|\mathcal K_N(f_0)\|_{C^0(N)}+1.
$$
Since $b_{\psi, N}(x,f_\pm(x))=\pm\cot\varepsilon$, we obtain
\begin{equation}\label{eq:fpm}
 \cQ_\lambda(f_-)>0,\qquad
 \cQ_\lambda(f_+)<0
 \qquad\text{on }N
\end{equation}
for every $\lambda\in[0,1]$.

Because $b_{\lambda,N}-b_{\psi,N}$ is independent of $t$,
\eqref{eq:exist-effective-monotonicity} holds with $b_{\lambda,N}$
in place of $b_{\psi,N}$.
Fix $\lambda\in[0,1]$ and let $u\in\mathcal U$ satisfy
$\cQ_\lambda(u)=0$.
Suppose that $u\ge f_+$ at some point of $N$.
By compactness of $N$, the function $u-f_+$ attains a
nonnegative maximum at some $x_+\in N$.
At $x_+$, we have $u\ge f_+$, $du=df_+$, and
$\Hess_{g_N}u\le\Hess_{g_N}f_+$.
Ellipticity of $\cQ_\lambda$, \eqref{eq:exist-effective-monotonicity} and \eqref{eq:fpm} give
$$
 0=\cQ_\lambda(u)(x_+)
 \le\cQ_\lambda(f_+)(x_+)<0,
$$
a contradiction. Hence $u<f_+$ on $N$. Similarly, $u>f_-$ on $N$. We then conclude that every solution $u\in\mathcal U$ satisfies
\begin{equation}\label{eq:exist-trapping}
 f_-<u<f_+
 \qquad\text{on }N.
\end{equation}

Set
$$
 \mathscr E
 =\{(x,t)\in N\times\R:f_-(x)\le t\le f_+(x)\}.
$$
By \eqref{eq:closedbarriers}, $\mathscr E$ is a compact subset
of $\mathscr S_{\psi,N}$, and \eqref{eq:exist-trapping} gives
$\Gamma_u(N)\subset\mathscr E$ for every solution $u\in\mathcal U$.
Since $b_{\lambda,N}=b_{\psi,N}+(1-\lambda)\mathcal K_N(f_0)$,
the functions $b_{\lambda,N}$ and all their derivatives in $x$ and $t$
are uniformly bounded for $\lambda\in[0,1]$ on a fixed
neighborhood of $\mathscr E$ compactly contained in
$\mathscr S_{\psi, N}$.

Choose finitely many coordinate domains
$U_i'\Subset U_i\Subset N$ such that $N=\bigcup_iU_i'$.
Each solution $u\in\mathcal U$ has a finite $C^1(N)$ norm.
Lemma~\ref{lem:elliptic}, applied on these charts, therefore
shows that every such solution is smooth.
On $\mathscr E$, we have
$$
 \frac{(b_{\lambda,N})_t}{\psi'(t)}
 =\frac{\mu}{\beta}(1+b_{\psi,N}^2)
 \ge\frac{\mu}{\beta}.
$$
Using this lower bound and the uniform bounds for
$b_{\lambda, N}$ and $d_xb_{\lambda, N}$ on $\mathscr E$, we can choose
a constant $\Lambda$ independent of $\lambda$ and $u$ such that
\eqref{eq:generalratios} holds along every solution graph
with $q_0=\mu/\beta$.
Applying Proposition~\ref{prop:gradient} on each $U_i$,
with $E=\overline{U_i'}$, and taking the maximum over the
finite cover, we obtain
\begin{equation}\label{eq:psiW}
 \sup_N\bigl(\psi'(u)W_u\bigr)\le C,
 \qquad \text{ where }W_u=\sqrt{1+|du|_{g_N}^2},
\end{equation}
where $C$ is independent of $\lambda$ and $u$.
By \eqref{eq:exist-trapping}, the values of $u$ lie in the
fixed interval $[\min_Nf_-,\max_Nf_+]$.
The function $\psi'$ has a positive minimum on this interval,
so \eqref{eq:psiW} gives a uniform bound for
$\|du\|_{C^0(N,g_N)}$.
By applying Lemma \ref{lem:elliptic} again on the finite cover, we have
\begin{equation}\label{eq:closed-apriori}
 \|u\|_{C^\ell(N)}\le C_\ell
 \qquad(\ell=0,1,2,\ldots).
\end{equation}
The constants depend on $\ell$, $N$, $g_N$, $\eta_N$, $\mu$, $\psi$,
but are independent of $\lambda$ and $u$.

To prove openness, fix $\lambda\in\mathcal I$ and a solution
$u\in\mathcal U$. Define the symmetric $(2,0)$-tensor
$$
 B_u
 =\frac1{W_u}\left(
 g_N^{-1}-\frac{\nabla_{g_N}u\otimes\nabla_{g_N}u}{W_u^2}
 \right)
$$
and the function
$$
 c_u=(b_{\lambda,N})_t(x,u(x))-\frac{\psi''(u)}{W_u}.
$$
For $v\in C^{2,\tau}(N)$, differentiating
\eqref{eq:closed-homotopy} along the variation $u+tv$
at $t=0$, with $\lambda$ fixed, gives
$$
 D\cQ_\lambda(u)v
 =\Div_{g_N,\eta_N}(B_u\,dv)
  -\frac{\psi'(u)}{W_u^3}\ip{du}{dv}_{g_N}-c_uv,
$$
where $B_u\,dv$ denotes the vector field obtained by contracting
$B_u$ with $dv$.
Since
$$
 B_u\,d(\psi(u))
 =\frac{\psi'(u)}{W_u^3}\nabla_{g_N}u,
$$
the linearization can be written as
\begin{equation}\label{eq:linearization}
 D\cQ_\lambda(u)v
 =\Div_{g_N,\eta_N+\psi(u)}(B_u\,dv)-c_uv.
\end{equation}
The tensor $B_u$ is positive definite, and
\eqref{eq:exist-effective-monotonicity} gives
$$
 c_u\ge\frac{\mu}{2\beta}\psi'(u)>0.
$$
By compactness of $N$, $B_u$ is uniformly positive definite and
$c_u$ has a positive minimum.
For $v\in C^\infty(N)$, integration by parts yields
$$
 -\int_N v\,D\cQ_\lambda(u)v\,e^{-\eta_N-\psi(u)}\,dV_{g_N}
 =\int_N\bigl(B_u(dv,dv)+c_uv^2\bigr)
          e^{-\eta_N-\psi(u)}\,dV_{g_N}.
$$
Thus the associated bilinear form is coercive on $H^1(N)$.
The Lax--Milgram theorem and elliptic regularity imply that
$$
 D\cQ_\lambda(u):C^{2,\tau}(N)\longrightarrow C^{0,\tau}(N)
$$
is an isomorphism.
The formula for $b_{\lambda,N}$ is defined for every $\lambda\in\R$,
and $(\lambda,u)\mapsto\cQ_\lambda(u)$ is a smooth map from
$\R\times\mathcal U$ to $C^{0,\tau}(N)$.
Since $D\cQ_\lambda(u)$ is an isomorphism, the implicit function
theorem gives solutions for all parameters in a neighborhood of
$\lambda$. Intersecting this neighborhood with $[0,1]$ shows that
$\mathcal I$ is relatively open in $[0,1]$.

To prove closedness, let $\lambda_j\in\mathcal I$ converge to
$\lambda_\infty\in[0,1]$, and choose corresponding solutions $u_j$.
The estimates \eqref{eq:closed-apriori} give a subsequence of
$\{u_j\}$ converging in $C^\infty(N)$ to a smooth function
$u_\infty$.
Since $f_-\le u_\infty\le f_+$, its graph lies in
$\mathscr E\Subset\mathscr S_{\psi, N}$, and hence
$u_\infty\in\mathcal U$.
Passing to the limit in $\cQ_{\lambda_j}(u_j)=0$ gives
$\cQ_{\lambda_\infty}(u_\infty)=0$.
Hence, $\mathcal I$ is closed in $[0,1]$.

Since $0\in\mathcal I$ and $[0,1]$ is connected,
$\mathcal I=[0,1]$. A solution at $\lambda=1$ satisfies
\eqref{eq:exactpde} and \eqref{eq:exist-strip}.
\end{proof}

We now specialize to $(N,g_N,\eta_N)=(M,h,\eta)$ and write
$\mathcal K=\mathcal K_M$. The associated potential, band, and
prescribed function are $\Phi_\psi$, $\mathscr S_\psi$, and $b_\psi$
from Sections~\ref{sec:scale} and~\ref{sec:gradient};
the norms are now computed using $h$ and $\widehat h$.
Using Proposition~\ref{prop:closed} and an exhaustion argument,
we construct an entire solution with graph contained in $\mathscr S_\psi$.

\begin{proposition}\label{prop:global}
Let $(M,h,\rho,\eta)$ be the fixed background data from
Lemma~\ref{lem:background}, and fix $\mu$ as in
\eqref{eq:mu-choice}.
For every $\psi\in\mathscr P$ satisfying
\eqref{eq:exist-potential-monotonicity}, there exists
$f\in C^\infty(M)$ satisfying
\begin{equation}\label{eq:global-pde}
 \mathcal K(f)=b_\psi(x,f(x))
 \qquad\text{on }M
\end{equation}
and
$$
 \Gamma_f(M)\subset\mathscr S_\psi.
$$
Moreover, every such solution satisfies the local estimate of
Corollary \ref{cor:uniformD}, with a constant independent of
$\psi$ and $f$.
\end{proposition}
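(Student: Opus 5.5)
The plan is to exhaust $M$ by compact domains, solve the equation on closed manifolds containing these domains via Proposition~\ref{prop:closed}, and pass to a limit. The limit step uses three ingredients: local barriers that keep graphs uniformly inside the band, the uniform gradient bound of Corollary~\ref{cor:uniformD}, and the compactness in Lemma~\ref{lem:elliptic}. The final assertion of the proposition is then immediate, since any solution satisfies the hypotheses of Corollary~\ref{cor:uniformD}.

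\emph{Step 1 (closed approximations).} Since $\rho$ is proper, I choose regular values $c_k\to\infty$ of $\rho$ and set $\Omega_k=\{\rho<c_k\}$. Let $N_k$ be the double of $\overline{\Omega_{k+1}}$, or any closed manifold containing a neighborhood of $\overline{\Omega_k}$. Put on $N_k$ a smooth metric $g_k$ and function $\eta_k$ that agree with $h$ and $\eta$ on $\Omega_k$; nonorientability causes no difficulty here. Proposition~\ref{prop:closed} then gives $f_k\in C^\infty(N_k)$ solving $\mathcal K_{N_k}(f_k)=b_{\psi,N_k}$ with graph in $\mathscr S_{\psi,N_k}$. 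On $\Omega_k$ this is exactly equation \eqref{eq:global-pde} with $\Gamma_{f_k}(\Omega_k)\subset\mathscr S_\psi$.

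\emph{Step 2 (local separation from the band boundary).} This is the main obstacle, because the barriers of Proposition~\ref{prop:closed} used compactness of $N$ and depended on $N_k$. I plan to replace them with local barriers on a fixed ball $B\Subset M$, independent of $k$. Fix $x_0$ and a small geodesic ball $B=B_h(x_0,r)$. Consider the upper barrier $\Phi_\psi(x,f_+(x))=\frac{\beta}{\mu}\left(\frac\pi2-\varepsilon(x)\right)$, where $\varepsilon(x)\to0$ at $\partial B$. Near $\partial B$ the barrier runs up to the top of the band, so $b_\psi=\cot\varepsilon\to+\infty$ there. More concretely, one can take $\cot\varepsilon(x)$ to blow up like a power of $(r^2-d^2)^{-1}$ while its graph becomes vertical, in the spirit of Concus--Finn or Serrin type local barriers. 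In the interior, $\cot\varepsilon$ must dominate $\mathcal K(f_+)$, which is bounded by a computation like the one for $C_{\mathrm{bar}}$ together with the term $\psi'(f_+)/W\le1$.

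Next I would compare $f_k$ with $f_+$ on $B$. Suppose $f_k-f_+$ has a nonnegative maximum. The maximum cannot occur at $\partial B$, because there $f_+$ reaches the top of the band while $f_k$ stays strictly inside it. At an interior maximum, the monotonicity \eqref{eq:exist-effective-monotonicity} gives a contradiction exactly as in Proposition~\ref{prop:closed}.

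If the blow-up barrier is too delicate, I would fall back on an alternative. The gradient bound of Corollary~\ref{cor:uniformD} holds up to the band boundary, and $\psi'$ is bounded below on compact $t$-ranges. I would use it to show, by an interior Harnack or continuity argument, that $\Phi_\psi(x,f_k(x))$ cannot approach $\pm\frac{\beta\pi}{2\mu}$ at a point without forcing $|b_\psi|$ to be huge on a neighborhood. That would contradict integrating the divergence form equation \eqref{eq:exist-curvature-operator} over a small ball, where the divergence term is bounded by the area of the boundary and $|\psi'/W|\le1$. The resulting estimate is $\left|\int_{B}b_\psi\, e^{-\eta}dV_h\right|\le C(B)$.

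Either way, the conclusion of Step 2 is that for each compact $E\subset M$ there is a compact $\mathscr E\Subset\mathscr S_\psi$ containing $\Gamma_{f_k}(E)$ for all large $k$.

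\emph{Step 3 (limit).} Step 2 gives a $k$-independent range $[t_-,t_+]$ for $f_k$ on $E$. On this range $\psi'$ is bounded below, so Corollary~\ref{cor:uniformD} gives uniform $C^1$ bounds for $f_k$ on $E$. Lemma~\ref{lem:elliptic}, applied on coordinate charts with $b_m=b_\psi$ (bounded with all derivatives near $\mathscr E$), then yields $C^\infty_{\mathrm{loc}}$ bounds. A diagonal subsequence converges to $f\in C^\infty(M)$ solving \eqref{eq:global-pde}, with graph in $\mathscr E\subset\mathscr S_\psi$ over each compact set, hence $\Gamma_f(M)\subset\mathscr S_\psi$.
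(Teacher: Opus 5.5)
Your proposal is correct and follows essentially the paper's proof: you solve on closed doubled approximations via Proposition~\ref{prop:closed}, use local barriers $f^\pm=\psi^{-1}\bigl(\beta\rho\pm\frac{\beta}{\mu}(\frac{\pi}{2}-\varepsilon\zeta)\bigr)$ that reach the band boundary on $\partial U_0$, compare at an interior extremum via \eqref{eq:exist-effective-monotonicity}, and finish with Corollary~\ref{cor:uniformD}, Lemma~\ref{lem:elliptic} and a diagonal argument. The step you call the main obstacle is easier than you expect: if $\zeta$ is smooth up to $\partial U_0$, the barriers are bounded in $C^2$ uniformly in $\varepsilon$, so $\mathcal K(f^\pm)$ is bounded independently of $\varepsilon$ while $|b_\psi(x,f^\pm)|=\cot(\varepsilon\zeta)\ge\cot\varepsilon$ can be made to dominate it; the blow-up of $b_\psi$ at $\partial U_0$ comes from touching the band boundary, so neither vertical Concus--Finn type barriers nor your fallback integral argument is needed.
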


\begin{proof}
Fix $\psi$ as in the statement, and choose an exhaustion of $M$
by relatively compact smooth domains $\Omega_j$ such that
$\Omega_j\Subset\Omega_{j+1}$.
For each $j$, choose a collar neighborhood of
$\partial\Omega_{j+1}$ in $\overline{\Omega_{j+1}}$,
identified with $\partial\Omega_{j+1}\times[0,a_j)$ and
disjoint from a neighborhood of $\overline{\Omega_j}$.
Let $z$ denote the collar coordinate, with $z=0$ on the
boundary, and let $k_j$ be the metric induced by $h$ on
$\partial\Omega_{j+1}$, regarded as independent of $z$
on the collar.

Choose $\chi_j\in C^\infty(\overline{\Omega_{j+1}})$ with
$0\le\chi_j\le1$, supported in the collar and equal to one
near $\partial\Omega_{j+1}$.
On the collar, set
$$
 \widetilde h_j
 =(1-\chi_j)h+\chi_j(dz^2+k_j),\qquad
 \widetilde\rho_j=(1-\chi_j)\rho.
$$
Outside the collar, set $\widetilde h_j=h$ and
$\widetilde\rho_j=\rho$.
These definitions give smooth data on
$\overline{\Omega_{j+1}}$.
The metric $\widetilde h_j$ is positive definite.
The modified data agree with $h$, $\rho$ on a neighborhood of
$\overline{\Omega_j}$, while near $\partial\Omega_{j+1}$
we have $\widetilde h_j=dz^2+k_j$ and
$\widetilde\rho_j=0$.

Doubling $\overline{\Omega_{j+1}}$ across its boundary gives
a closed manifold $N_j$ on which the modified metric and
function extend smoothly. Denote these extensions by
$h_j$, $\rho_j$, and set $\eta_j=-\beta\rho_j$.
On the copy of $\Omega_j$ in $N_j$, the data
$h_j$, $\rho_j$, $\eta_j$ agree with $h$, $\rho$, $\eta$.
Apply Proposition \ref{prop:closed} with
$(N,g_N,\eta_N)=(N_j,h_j,\eta_j)$ and the same $\mu$ and $\psi$.
The proposition requires only smoothness of the background data
and compactness of $N_j$. Let $f_j$ be the restriction of the resulting
solution to $\Omega_j$.
On this copy, the auxiliary operator and prescribed function agree
with $\mathcal K$ and $b_\psi$ for the original background. Thus
$$
 \mathcal K(f_j)=b_\psi(x,f_j(x))
 \qquad\text{on }\Omega_j,
$$
and
$$
 \Gamma_{f_j}(\Omega_j)\subset\mathscr S_\psi.
$$

Fix a nonempty compact set $E\subset M$, and choose relatively
compact smooth domains with
$$
 E\Subset U_1\Subset U_0\Subset M.
$$
Choose $j_0$ such that $\overline{U_0}\subset\Omega_j$
for every $j\ge j_0$. All estimates below concern these indices.
Since $\eta=-\beta\rho$ and $\psi$ is increasing, the band
condition gives $f_j(\overline{U_0})\subset I_{U_0}$, where
$$
 I_{U_0}=
 \left[
 \psi^{-1}\left(
   \beta\min_{\overline{U_0}}\rho-\frac{\beta\pi}{2\mu}
 \right),
 \psi^{-1}\left(
   \beta\max_{\overline{U_0}}\rho+\frac{\beta\pi}{2\mu}
 \right)
 \right].
$$

Applying Corollary \ref{cor:uniformD} on $U_0$ with compact
subset $\overline{U_1}$, we have
\begin{equation}\label{eq:cU1U0}
 \sup_{\overline{U_1}}
 \left(\psi'(f_j)\sqrt{1+|df_j|_h^2}\right)
 \le C_{\overline{U_1},U_0},
\end{equation}
where $C_{\overline{U_1},U_0}$ is independent of $j$ and $\psi$.
Set
$$
 c_{U_0}=\min_{t\in I_{U_0}}\psi'(t)>0.
$$
Combining \eqref{eq:cU1U0} with this lower bound for
$\psi'(f_j)$ yields
\begin{equation}\label{eq:f_jbound}
 \|df_j\|_{C^0(\overline{U_1},h)}
 \le\frac{C_{\overline{U_1},U_0}}{c_{U_0}}.
\end{equation}
The resulting gradient bound may depend on $\psi$, but is
independent of $j$.

We next construct local barriers to place the graphs over
$U_1$ in a fixed compact subset of $\mathscr S_\psi$, as
required by Lemma \ref{lem:elliptic}.
Choose $\zeta\in C^\infty(\overline{U_0})$ with
$\zeta=0$ on $\partial U_0$, $0<\zeta\le1$ in $U_0$,
and $\zeta=1$ near $\overline{U_1}$.
For $0<\varepsilon\le\pi/4$, define
\begin{equation}\label{eq:localbarriers}
 f^\pm_\varepsilon
 =\psi^{-1}\left(
   \beta\rho\pm\frac{\beta}{\mu}
                  (\frac\pi 2-\varepsilon\zeta)
 \right).
\end{equation}
On $\partial U_0$, the functions $f^-_\varepsilon$ and
$f^+_\varepsilon$ equal the lower and upper endpoints of
the vertical interval in $\mathscr S_\psi$, respectively.
Hence the band condition for $f_j$ gives
\begin{equation}\label{eq:fjfeps}
 f^-_\varepsilon<f_j<f^+_\varepsilon
 \qquad\text{on }\partial U_0.
\end{equation}

Note the values of $f^\pm_\varepsilon$ lie in $I_{U_0}$.
Since $\psi(I_{U_0})$ is compact, $\psi^{-1}$ and its first
two derivatives are bounded on  $\psi(I_{U_0})$.
The functions $f^\pm_\varepsilon$ defined by
\eqref{eq:localbarriers} are therefore uniformly bounded
in $C^2(\overline{U_0})$ for $0<\varepsilon\le\pi/4$.
By \eqref{eq:exist-curvature-operator}, there is a constant
$C_{\mathrm{bar},U_0}<\infty$, independent of $j$ and
$\varepsilon$, such that
$$
 \max_{\sigma\in\{-,+\}}
 \|\mathcal K(f^\sigma_\varepsilon)\|_{C^0(\overline{U_0})}
 \le C_{\mathrm{bar},U_0}.
$$
In $U_0$, the barrier graphs lie in $\mathscr S_\psi$, and
$$
 b_\psi(x,f^+_\varepsilon(x))
 =\cot(\varepsilon\zeta(x))\ge\cot\varepsilon,
$$
whereas
$$
 b_\psi(x,f^-_\varepsilon(x))
 =-\cot(\varepsilon\zeta(x))\le-\cot\varepsilon.
$$
Choose and fix $\varepsilon\in(0,\pi/4]$ such that
$\cot\varepsilon>C_{\mathrm{bar},U_0}+1$.
It follows that
$$
 \mathcal K(f^-_\varepsilon)
 -b_\psi(x,f^-_\varepsilon(x))>0
 \qquad\text{on }U_0,
$$
and
\begin{equation}\label{eq:K-b}
 \mathcal K(f^+_\varepsilon)
 -b_\psi(x,f^+_\varepsilon(x))<0
 \qquad\text{on }U_0.
\end{equation}

Suppose that $f_j\ge f^+_\varepsilon$ at some point of $U_0$.
The strict boundary inequality \eqref{eq:fjfeps} implies that
$f_j-f^+_\varepsilon$ attains a nonnegative maximum at an
interior point $x_+\in U_0$, where
$df_j=df^+_\varepsilon$ and
$\Hess_hf_j\le\Hess_hf^+_\varepsilon$.
The principal coefficient matrix of $\mathcal K$ is positive
definite and depends only on the base point and the gradient.
Its coefficients therefore agree for $f_j$ and
$f^+_\varepsilon$ at $x_+$.
By the Hessian inequality and
\eqref{eq:exist-effective-monotonicity}, we obtain
$$
 \begin{aligned}
 0
 &=\mathcal K(f_j)(x_+)-b_\psi(x_+,f_j(x_+))\\
 &\le\mathcal K(f^+_\varepsilon)(x_+)
       -b_\psi(x_+,f^+_\varepsilon(x_+))<0,
 \end{aligned}
$$
where the strict inequality follows from \eqref{eq:K-b}.
This contradiction proves that $f_j<f^+_\varepsilon$ on $U_0$.
Applying the same argument at a hypothetical nonpositive minimum
of $f_j-f^-_\varepsilon$ gives $f_j>f^-_\varepsilon$ on $U_0$.
Consequently,
$$
 f^-_\varepsilon<f_j<f^+_\varepsilon
 \qquad\text{on }U_0.
$$

Since $\psi$ is increasing and $\zeta=1$ near
$\overline{U_1}$, \eqref{eq:localbarriers} gives
\begin{equation}\label{eq:separation}
 \left|\frac{\mu}{\beta}\Phi_\psi(x,f_j(x))\right|
 \le\frac{\pi}{2}-\varepsilon
 \qquad\text{on }\overline{U_1}.
\end{equation}
Here $\varepsilon$ is independent of $j$, but may depend on
the fixed $\psi$.

By \eqref{eq:separation} and $f_j(\overline{U_0})\subset I_{U_0}$, all graphs
$\Gamma_{f_j}(U_1)$ lie in the compact set
$$
 \mathscr E=
 \left\{(x,t)\in\overline{U_1}\times I_{U_0}:
 \left|\frac{\mu}{\beta}\Phi_\psi(x,t)\right|
 \le\frac{\pi}{2}-\varepsilon\right\}
 \Subset\mathscr S_\psi.
$$
Choose an open neighborhood $\mathscr O_0$ of $\mathscr E$
with $\mathscr O_0\Subset\mathscr S_\psi$.
The smoothness of $b_\psi$ on $\mathscr S_\psi$ gives
$\|b_\psi\|_{C^k(\mathscr O_0)}<\infty$ for every $k\ge0$.

Choose finitely many coordinate domains
$V_i'\Subset V_i\Subset U_1$ such that $E\subset\bigcup_iV_i'$.
The inclusion $\Gamma_{f_j}(V_i)\subset\mathscr E$,
the uniform gradient bound \eqref{eq:f_jbound}, and the bounds for all derivatives of $b_\psi$ on $\mathscr O_0$
allow us to apply Lemma~\ref{lem:elliptic} to the restrictions
$f_j|_{V_i}$, using the same function $b_\psi(x,t)$
for every $j$.
Combining the resulting estimates on $V_i'$ over the
finite cover of $E$, we obtain, for every integer $\ell\ge0$,
$$
 \sup_{j\ge j_0}\|f_j\|_{C^\ell(E)}\le C_{\ell,E,\psi}<\infty.
$$
These constants may depend on $\psi$ and the chosen domains,
but are independent of $j$.

By the Arzel\`a--Ascoli theorem and a diagonal argument along
the exhaustion $\{\Omega_j\}$, a subsequence of $\{f_j\}$ converges
in $C^\infty_{\mathrm{loc}}(M)$ to a smooth function $f$.
For each compact set $E$, the separation estimate
\eqref{eq:separation} passes to the limit and places
$\Gamma_f(E)$ inside $\mathscr S_\psi$.
Thus $\Gamma_f(M)\subset\mathscr S_\psi$.
The smooth local convergence and the smoothness of $b_\psi$
near these graphs allow passage to the limit in the equation
for $f_j$, which yields \eqref{eq:global-pde}.
Finally, by applying Corollary \ref{cor:uniformD} to every smooth
solution of \eqref{eq:global-pde} whose graph lies in
$\mathscr S_\psi$, we obtain the asserted local estimate.
\end{proof}

\section{Proof of the main theorem}\label{sec:choice}

Throughout this section, we retain the fixed background data
$(M,h,\rho,\eta)$ and $\delta>0$ from
Lemma \ref{lem:background}, and the choice of $\mu$ in
\eqref{eq:mu-choice}. Recall that $\beta=n-1$.
In this section, we prove Theorem~\ref{thm:main} by choosing the
vertical potential $\psi$ and applying Proposition~\ref{prop:global}
to construct an entire graph. We show that the conformal metric
$\widetilde g=e^{-2\theta/(n-1)}G$, with $\theta=\varphi+\log W$,
is complete and has uniformly positive scalar curvature.

We first use Corollary~\ref{cor:uniformD} to bound $\cD$
by a smooth function of $\psi(f)$ chosen independently
of $\psi$ and $f$.

\begin{lemma}\label{lem:majorant}
There is a smooth function $\cB:\R\to[1,\infty)$, depending
only on the fixed background data $(M,h,\rho,\eta)$ and $\mu$,
such that, for every $\psi\in\mathscr P$ and every
$f\in C^\infty(M)$ satisfying \eqref{eq:global-pde} with
$\Gamma_f(M)\subset\mathscr S_\psi$, we have
\begin{equation}\label{eq:majorant}
 \cD=\psi'(f)W\le\cB(\psi(f))
 \qquad\text{on }M.
\end{equation}
\end{lemma}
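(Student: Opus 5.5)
The idea is to convert the local, $\psi$-independent bounds of Corollary~\ref{cor:uniformD} into a bound that depends only on the value $u=\psi(f)$. The band condition already localizes the graph in terms of $u$. Indeed, $\Gamma_f(M)\subset\mathscr S_\psi$ means $|\psi(f(x))-\beta\rho(x)|<\beta\pi/(2\mu)$, so at any point where $\psi(f(x))=u$ we have
\[
 \rho(x)<\frac{|u|}{\beta}+\frac{\pi}{2\mu}.
\]
Since $\rho$ is proper, such points lie in the compact set $E_k=\{\rho\le k\}$ as soon as $k\ge |u|/\beta+\pi/(2\mu)$. This containment uses only the fixed data $(\rho,\beta,\mu)$ and holds for every $\psi$ and every $f$.

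Concretely, for each integer $k\ge1$ I take $E_k=\{\rho\le k\}$, which is compact and nonempty since $\rho\ge0$ is proper and tends to $+\infty$. I also take a relatively compact open set $U_k\Supset E_k$, for instance $U_k=\{\rho<k+1\}$. Corollary~\ref{cor:uniformD} applies to the restriction $f|_{U_k}$, because that restriction solves $K=b_\psi$ with graph in $\mathscr S_\psi$. It gives
\[
 \sup_{E_k}\cD\le C_k:=C_{E_k,U_k}.
\]
The constant $C_k$ depends only on the background data, $\mu$ and $k$, and not on $\psi$ or $f$. Replacing $C_k$ by $\max\{1,C_1,\dots,C_k\}$, I may assume the sequence is nondecreasing and at least $1$. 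With $k(u)$ the least integer $\ge |u|/\beta+\pi/(2\mu)$, this gives $\cD(x)\le C_{k(u)}$ whenever $\psi(f(x))=u$.

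It remains to replace the step function $u\mapsto C_{k(u)}$ by a smooth majorant $\cB:\R\to[1,\infty)$. For $|u|\le \beta(j-\pi/(2\mu))$ we have $k(u)\le j$, so the step function is bounded by a nondecreasing function of $|u|$. I would choose a smooth even function $\cB$, nondecreasing in $|u|$, with $\cB(u)\ge C_{j+1}$ on $\{|u|\le \beta j\}$ for each $j$. Such a $\cB$ can be built by smoothing a piecewise-linear interpolation, or by a convolution of $u\mapsto C_{\lceil |u|/\beta\rceil+m}$ with a shifted index $m\ge \pi/(2\mu)+2$. Evaluating at $u=\psi(f(x))$ then yields \eqref{eq:majorant}.

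The only point needing care is that Corollary~\ref{cor:uniformD} is stated for $U\Subset M$ and needs no distance-to-boundary control of the band. This is exactly what makes $C_k$ independent of $\psi$, so there is no real obstacle; the smoothing step is routine.
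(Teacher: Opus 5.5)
Your proposal is correct and follows the paper's route: the band condition places every point with a given value of $\psi(f)$ in a compact sublevel set of $\rho$, Corollary~\ref{cor:uniformD} gives $\psi$-independent constants there, and a routine smoothing produces $\cB$. The paper differs only cosmetically, using the one-sided bound $\rho<\psi(f)/\beta+\pi/(2\mu)$ and a partition of unity on $\R$ instead of your bound through $|u|$ and convolution.

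There are two small slips to fix. First, $\{\rho\le k\}$ can be empty for small $k$, since $s$ may be large everywhere; set $C_k=1$ there, as the paper does. Second, your stated requirement $\cB\ge C_{j+1}$ on $\{|u|\le\beta j\}$ omits the shift by $\pi/(2\mu)$ and would not suffice as written. Your convolution with index shift $m\ge\pi/(2\mu)+2$ does account for this shift correctly.
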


\begin{proof}
For each $j\in\mathbb Z$, set
$$
 E_j=
 \left\{x\in M:
 \rho(x)\le\frac{j+1}{\beta}+\frac{\pi}{2\mu}\right\}.
$$
Each $E_j$ is compact because $\rho$ is proper and bounded
below. If $\psi(f(x))\in[j,j+1]$, by the band condition, we have
$$
 \rho(x)
 <\frac{\psi(f(x))}{\beta}+\frac{\pi}{2\mu}
 \le\frac{j+1}{\beta}+\frac{\pi}{2\mu},
$$
and hence $x\in E_j$.
For every nonempty $E_j$, choose an open set
$U_j\Subset M$ with $E_j\Subset U_j$.
By Corollary~\ref{cor:uniformD}, there is a constant
$B_j\ge1$, independent of $\psi$ and $f$, such that
$\cD\le B_j$ on $E_j$.
Set $B_j=1$ when $E_j$ is empty.

Choose a nonnegative, locally finite smooth partition of
unity $\{\chi_i\}$ on $\R$, with compact supports.
For each $i$, the compactness of $\supp\chi_i$ implies that
only finitely many integers $j$ satisfy
$[j,j+1]\cap\supp\chi_i\ne\varnothing$.
Set
$$
 d_i=\max\left(
 \{1\}\cup
 \left\{B_j:j\in\mathbb Z,\,
 [j,j+1]\cap\supp\chi_i\ne\varnothing\right\}
 \right),
$$
and define
$$
 \cB(y)=\sum_i d_i\chi_i(y).
$$
Then $\cB$ is smooth and $\cB\ge1$.
If $y\in[j,j+1]$ and $\chi_i(y)>0$, the choice of $d_i$
gives $d_i\ge B_j$. Hence
$$
 \cB(y)\ge B_j\sum_i\chi_i(y)=B_j.
$$
For any $x\in M$, choose $j\in\mathbb Z$ such that
$\psi(f(x))\in[j,j+1]$. Then $x\in E_j$, and hence
$$
 \cD(x)\le B_j\le\cB(\psi(f(x))),
$$
which proves \eqref{eq:majorant}.
By construction, $\cB$ depends only on the fixed background
data $(M,h,\rho,\eta)$ and $\mu$, and is independent of
$\psi$ and $f$.
\end{proof}

We next construct a potential whose derivatives satisfy
prescribed bounds in terms of its value.

\begin{lemma}\label{lem:small}
Let $\varepsilon_*:\R\to(0,1]$ be a continuous function.
There exists $\psi\in\mathscr P$ such that, for every
$t\in\R$,
\begin{equation}\label{eq:smallfunction}
 0<\psi'(t)\le\varepsilon_*(\psi(t)),
\end{equation}
\begin{equation}\label{eq:small-second-derivative}
 |\psi''(t)|\le\varepsilon_*(\psi(t))^2,
\end{equation}
\begin{equation}\label{eq:small-log-derivative}
 |(\log\psi')'(t)|\le\varepsilon_*(\psi(t)).
\end{equation}
\end{lemma}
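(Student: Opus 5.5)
The plan is to obtain $\psi$ as the solution of an autonomous ODE
$$
 \psi'=F(\psi),\qquad \psi(0)=0,
$$
for a suitable smooth function $F:\R\to(0,1]$. The point of this choice is that every derivative condition on $\psi$ becomes a condition on $F$ evaluated at $\psi(t)$, which matches the form of \eqref{eq:smallfunction}--\eqref{eq:small-log-derivative}. By the chain rule,
$$
 \psi''=F'(\psi)F(\psi),\qquad (\log\psi')'=F'(\psi),\qquad (\log\psi')''=F''(\psi)F(\psi).
$$
It therefore suffices to construct $F$ smooth with
$$
 0<F\le\varepsilon_*,\qquad |F'|\le\varepsilon_*,\qquad |F''|\le1.
$$
Granting this, and recalling $\varepsilon_*\le1$, we get $0<\psi'\le\varepsilon_*(\psi)\le1$, $|(\log\psi')'|\le\varepsilon_*(\psi)\le1$, $|\psi''|=|F'|F\le\varepsilon_*(\psi)^2$, and $(\log\psi')''\ge-|F''|F\ge-1$. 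These give both the class conditions \eqref{eq:Aclass} and the three displayed bounds.

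Next I check that $\psi$ lies in $\mathscr P$. Since $0<F\le1$, the ODE has a unique global smooth solution, and it is strictly increasing. Surjectivity onto $\R$ follows from the time needed to reach level $y$, namely $t(y)=\int_0^y dz/F(z)$. Because $F\le1$, we have $|t(y)|\ge|y|$, so $t$ is a bijection $\R\to\R$ and $\psi=t^{-1}$ is an increasing diffeomorphism of $\R$ with $\psi(0)=0$.

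The main step is the construction of $F$, since $\varepsilon_*$ may decay arbitrarily fast at $\pm\infty$. I will avoid any bounded-slope construction for $\log F$, because it would fail for such $\varepsilon_*$; it is also unnecessary, since $(\log\psi')'$ equals $F'(\psi)$ rather than $F'/F$. Instead I use a partition of unity. Fix a locally finite smooth partition of unity $\{\chi_i\}$ on $\R$ with compact supports, for instance translates of a fixed bump, so that at most $N$ of the supports meet at any point. Let
$$
 e_i=\min\{\varepsilon_*(y):y\in\supp\chi_i\}>0,
$$
which is positive by continuity and compactness. Set
$$
 c_i=\frac{e_i}{N\bigl(1+\|\chi_i\|_{C^2}\bigr)},\qquad F=\sum_i c_i\chi_i .
$$
Then $F$ is smooth, and $F>0$ because the $\chi_i$ sum to one and every $c_i>0$. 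At each $y$, each of $F(y)$, $|F'(y)|$ and $|F''(y)|$ is a sum of at most $N$ terms. Each term is bounded by $e_i/N\le\varepsilon_*(y)/N$, since $y\in\supp\chi_i$. This yields $F,|F'|,|F''|\le\varepsilon_*\le1$, which completes the construction.
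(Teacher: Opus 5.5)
Your proposal is correct and follows essentially the same route as the paper. The paper likewise builds $\gamma=\sum_j c_j\phi_j$ from a partition of unity with at most $L$ overlapping supports and coefficients scaled by $\min_{\supp\phi_j}\varepsilon_*$. It then sets $\psi=T^{-1}$ with $T(y)=\int_0^y dz/\gamma(z)$, so that $\psi'=\gamma(\psi)$, and the required bounds follow from those on $\gamma$, $\gamma'$, $\gamma''$ exactly as you describe.
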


\begin{proof}
Choose a nonnegative, locally finite smooth partition of unity
$\{\phi_j\}$ on $\R$ with compact supports, such that
each point belongs to at most $L$ supports for some
$L<\infty$.
Choose constants $c_j>0$ satisfying
$$
 c_j\max\left\{
 \|\phi_j\|_\infty,\,
 \|\phi_j'\|_\infty,\,
 \|\phi_j''\|_\infty
 \right\}
 \le\frac1L\min_{y\in\supp\phi_j}\varepsilon_*(y).
$$
The minima on the right are positive by continuity of
$\varepsilon_*$ and compactness of the supports.
Set
\[
 \gamma(y)=\sum_j c_j\phi_j(y).
\]
This locally finite sum defines a smooth positive function.
Since each point $y$ belongs to at most $L$ of the sets
$\supp\phi_j$, the choice of $c_j$ gives
\[
 0<\gamma(y)\le\varepsilon_*(y),
\]
and
$$
 \max\{|\gamma'(y)|, |\gamma''(y)|\}\le\varepsilon_*(y).
$$

Define
$$
 T(y)=\int_0^y\frac{dz}{\gamma(z)}.
$$
Since $0<\gamma\le1$, we have $T'\ge1$ and
$T(y)\to\pm\infty$ as $y\to\pm\infty$.
Thus $T$ is a smooth increasing diffeomorphism of $\R$.
Let $\psi=T^{-1}$.
Then $\psi(0)=0$, and
$$
 \psi'(t)=\gamma(\psi(t)).
$$
It follows that
$$
 \psi''(t)=\gamma(\psi(t))\gamma'(\psi(t)),
\qquad
 (\log\psi')'(t)=\gamma'(\psi(t)),
\qquad
 (\log\psi')''(t)
 =\gamma(\psi(t))\gamma''(\psi(t)).
$$
By the bounds for $\gamma$, $\gamma'$ and $\gamma''$, we obtain
\eqref{eq:smallfunction},
\eqref{eq:small-second-derivative} and
\eqref{eq:small-log-derivative}.
Since $\varepsilon_*\le1$, these also imply
$0<\psi'\le1$, $|(\log\psi')'|\le1$, and
$$
 (\log\psi')''(t)
 \ge-\varepsilon_*(\psi(t))^2\ge-1.
$$
Thus $\psi\in\mathscr P$.
\end{proof}

We next give sufficient conditions for a metric with
uniformly positive weighted scalar curvature to admit
a complete conformal metric with uniformly positive
scalar curvature.

\begin{lemma}\label{lem:remove-weight}
Let $(N,g_N)$ be a connected smooth Riemannian manifold
of dimension $n\ge3$ without boundary.
Let $\theta_N,\,u_N\in C^\infty(N)$, and assume that $u_N$
is proper.
Suppose that, for some constants $\lambda>0$ and $B\ge0$,
$$
 \mathcal R_n(g_N,\theta_N)\ge\lambda,
 \qquad \theta_N\ge-B
 \qquad\text{on }N,
$$
and
$$
 e^{\frac{2\theta_N}{n-1}}|du_N|_{g_N}^2\le1
 \qquad\text{on }N.
$$
Then the metric
$$
 \widetilde g_N=e^{-\frac{2\theta_N}{n-1}}g_N
$$
is complete and satisfies
$$
 R_{\widetilde g_N}\ge\lambda e^{-\frac{2B}{n-1}}
 \qquad\text{on }N.
$$
\end{lemma}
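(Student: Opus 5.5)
The plan is to treat the curvature bound and completeness separately. Both follow from the conformal change formula and a Lipschitz argument for the proper function $u_N$.

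\emph{Scalar curvature.} Write $\beta=n-1$ and $\widetilde g_N=e^{2w}g_N$ with $w=-\theta_N/\beta$. The standard formula in dimension $n$ is
\[
 R_{\widetilde g_N}=e^{-2w}\bigl(R_{g_N}-2(n-1)\Delta_{g_N}w-(n-1)(n-2)|dw|_{g_N}^2\bigr).
\]
Substituting $w=-\theta_N/\beta$ gives
\[
 -2(n-1)\Delta_{g_N}w=2\Delta_{g_N}\theta_N,\qquad
 -(n-1)(n-2)|dw|_{g_N}^2=-\frac{n-2}{n-1}|d\theta_N|_{g_N}^2.
\]
These are exactly the terms in \eqref{eq:critical-weighted-scalar} with $m=n$, so
\[
 R_{\widetilde g_N}=e^{2\theta_N/\beta}\,\mathcal R_n(g_N,\theta_N).
\]
Since $\mathcal R_n(g_N,\theta_N)\ge\lambda>0$ and $\theta_N\ge-B$, the factor satisfies $e^{2\theta_N/\beta}\ge e^{-2B/\beta}$. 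Hence $R_{\widetilde g_N}\ge\lambda e^{-2B/(n-1)}$.

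\emph{Completeness.} The gradient of $u_N$ rescales under the conformal change as
\[
 |du_N|_{\widetilde g_N}^2=e^{2\theta_N/\beta}|du_N|_{g_N}^2\le1.
\]
Integrating along piecewise smooth curves then shows that $u_N$ is $1$-Lipschitz for the distance $d_{\widetilde g_N}$, since $N$ is connected.

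Fix $p\in N$ and let $S$ be a closed $d_{\widetilde g_N}$-bounded set, say $S\subset B_{\widetilde g_N}(p,R)$. On $S$ we have $|u_N-u_N(p)|\le R$, so $S$ lies in $u_N^{-1}([u_N(p)-R,u_N(p)+R])$. That set is compact because $u_N$ is proper. The metric $d_{\widetilde g_N}$ induces the manifold topology, so $S$ is a closed subset of a compact set and hence compact. Therefore every closed bounded set is compact, and the Hopf--Rinow theorem gives completeness of $\widetilde g_N$.

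The only step needing care is the exact matching of the conformal formula's coefficients with $\mathcal R_n$. Everything else is routine.
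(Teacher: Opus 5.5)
Your proposal is correct and follows essentially the same route as the paper: you write $\widetilde g_N=e^{2w}g_N$ with $w=-\theta_N/(n-1)$, apply the conformal formula to obtain $R_{\widetilde g_N}=e^{2\theta_N/(n-1)}\mathcal R_n(g_N,\theta_N)$ (your coefficient check is right), and then prove completeness by showing $u_N$ is $1$-Lipschitz for $d_{\widetilde g_N}$ and combining its properness with Hopf--Rinow. The only cosmetic difference is that you show arbitrary closed bounded sets are compact, whereas the paper checks this for closed metric balls.
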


\begin{proof}
By the conformal scalar curvature formula and
\eqref{eq:critical-weighted-scalar}, we have
$$
 \begin{aligned}
 R_{\widetilde g_N}
 =e^{\frac{2\theta_N}{n-1}}
   \left(R_{g_N}+2\Delta_{g_N}\theta_N
         -\frac{n-2}{n-1}|d\theta_N|_{g_N}^2\right)&=e^{\frac{2\theta_N}{n-1}}\mathcal R_n(g_N,\theta_N)\\
 &\ge\lambda e^{-\frac{2B}{n-1}}.
 \end{aligned}
$$

To prove completeness, observe that
$$
 |du_N|_{\widetilde g_N}^2
 =e^{\frac{2\theta_N}{n-1}}|du_N|_{g_N}^2\le1.
$$
For any $x,y\in N$, integrating $du_N$ along piecewise smooth
curves from $x$ to $y$ and taking the infimum of their
lengths gives
$$
 |u_N(x)-u_N(y)|\le d_{\widetilde g_N}(x,y).
$$
For every $x\in N$ and $r>0$, we therefore have
$$
 \{y\in N:d_{\widetilde g_N}(x,y)\le r\}
 \subset u_N^{-1}([u_N(x)-r,u_N(x)+r]).
$$
The inverse image $u_N^{-1}([u_N(x)-r,u_N(x)+r])$ is compact
because $u_N$ is proper.
Since $d_{\widetilde g_N}$ induces the manifold topology,
the closed ball is compact as a closed subset of
$u_N^{-1}([u_N(x)-r,u_N(x)+r])$.
As $x\in N$ and $r>0$ are arbitrary, the Hopf--Rinow
theorem implies that $(N,\widetilde g_N)$ is complete.
\end{proof}

Now we are ready to prove the main theorem.
\begin{proof}[Proof of Theorem \ref{thm:main}]
Let $\cB$ be the function given by
Lemma \ref{lem:majorant}, and set
$$
 \varepsilon_0
 =\min\left\{
 1,\frac{\sqrt\delta}{4},\frac{\mu}{2\beta}
 \right\}.
$$
Define
\begin{equation}\label{eq:choice-smallness}
 \varepsilon_*(y)
 =\min\left\{
 \varepsilon_0,\,
 e^{-\frac{\pi\beta}{2\mu(\beta-1)}}
       \cB(y)^{-\frac{1}{\beta-1}}
 \right\}.
\end{equation}
Since $\beta-1=n-2>0$ and $\cB\ge1$, the function
$\varepsilon_*:\R\to(0,1]$ is continuous.
Its definition depends only on the fixed background data
$(M,h,\rho,\eta)$, $\delta$, and $\mu$.
Apply Lemma~\ref{lem:small} to $\varepsilon_*$, and fix
the resulting function $\psi\in\mathscr P$.
By \eqref{eq:smallfunction} and
\eqref{eq:small-second-derivative},
$$
 0<\psi'(t)\le\varepsilon_0,\qquad
 |\psi''(t)|\le\varepsilon_0^2
 \qquad\text{ for every }t\in\R.
$$
Moreover, \eqref{eq:small-log-derivative} gives
$$
 |(\log\psi')'(t)|
 \le\varepsilon_*(\psi(t))
 \le\varepsilon_0
 \le\frac{\mu}{2\beta}
 \qquad\text{ for every } t\in\R.
$$

For this choice of $\psi$, by Lemma \ref{lem:background}
and \eqref{eq:ambient-weight-change}, we have
$$
 \mathcal R_{n+1}(\widehat h,\Phi_\psi)
 \ge\delta-\left(2+\frac{\beta}{n}\right)\varepsilon_0^2
 \ge\frac{3\delta}{4}
 \qquad\text{on }M\times\R,
$$
where we used $\beta/n<1$ and
$\varepsilon_0^2\le\delta/16$.
The bound for $|(\log\psi')'|$ verifies
\eqref{eq:exist-potential-monotonicity}.
Proposition~\ref{prop:global} therefore provides
$f\in C^\infty(M)$ satisfying 
$$
 \mathcal K(f)=b_\psi(x,f(x))
 \qquad\text{on }M,
$$
and $\Gamma_f(M)\subset\mathscr S_\psi$.

For this graph, write
$$
 G=h+df\otimes df,\qquad W=\sqrt{1+|df|_h^2},
$$
and
$$
 \varphi=\psi(f)-\beta\rho,\qquad
 \theta=\varphi+\log W.
$$
Define the conformal metric
$$
 \widetilde g=e^{-\frac{2\theta}{\beta}}G.
$$
By Proposition \ref{prop:scalar}, we obtain
\begin{equation}\label{eq:weighted-uniformpositive}
 \begin{aligned}
 \mathcal R_n(G,\theta)
 &\ge\frac{3\delta}{4}-C_n\psi'(f)^2\\
 &\ge\frac{3\delta}{4}-C_n\varepsilon_0^2
 \ge\frac{\delta}{2}
 \qquad\text{on }M,
 \end{aligned}
\end{equation}
where we use $C_n=\frac{n^2+1}{n(n+1)}<1$ and
$\varepsilon_0^2\le\delta/16$.
By \eqref{eq:normalized-weight-lower}, we also have
\begin{equation}\label{eq:thetalow}
 \theta\ge-\frac{\beta\pi}{2\mu}
 \qquad\text{on }M.
\end{equation}

Set $u=\psi\circ f$.
Since $\varphi=u-\beta\rho$, the band condition $\Gamma_f(M)\subset\mathscr S_\psi$ gives
\begin{equation}\label{eq:properheight}
 u>\beta\rho-\frac{\beta\pi}{2\mu}
 \qquad\text{on }M.
\end{equation}
For every compact interval $[t_1,t_2]\subset\R$, it follows that
$$
 u^{-1}([t_1,t_2])
 \subset
 \left\{x\in M:
 \rho(x)\le\frac{t_2}{\beta}+\frac{\pi}{2\mu}\right\}.
$$
By Lemma \ref{lem:background}, $\rho$ is proper and
nonnegative, so
$\{\rho\le t_2/\beta+\pi/(2\mu)\}$ is compact.
Its closed subset $u^{-1}([t_1, t_2])$ is therefore compact.
Thus $u$ is proper.

For $\cD=\psi'(f)W$, by Lemma \ref{lem:majorant} we know
$\cD\le\cB(u)$ on $M$.
Using $du=\psi'(f)df$, \eqref{eq:AV}, and the band bound
$\varphi<\beta\pi/(2\mu)$, we obtain
\begin{equation}\label{eq:propergradient}
 \begin{aligned}
 |du|_{\widetilde g}^2
 =e^{\frac{2\theta}{\beta}}|du|_G^2
 &=e^{\frac{2\varphi}{\beta}}W^{\frac{2}{\beta}}
       \psi'(f)^2(1-W^{-2})\\
 &\le e^{\frac{\pi}{\mu}}
       \psi'(f)^{\frac{2(\beta-1)}{\beta}}\cD^{\frac{2}{\beta}}\\
 &\le e^{\frac{\pi}{\mu}}
       \psi'(f)^{\frac{2(\beta-1)}{\beta}}\cB(u)^{\frac{2}{\beta}}.
 \end{aligned}
\end{equation}
Using \eqref{eq:smallfunction} and
\eqref{eq:choice-smallness} in \eqref{eq:propergradient},
we conclude that
\begin{equation}\label{eq:dule1}
 |du|_{\widetilde g}^2
 \le e^{\frac{\pi}{\mu}}
       \varepsilon_*(u)^{\frac{2(\beta-1)}{\beta}}\cB(u)^{\frac{2}{\beta}}
 \le1.
\end{equation}

Since $u$ is proper, \eqref{eq:weighted-uniformpositive}, \eqref{eq:thetalow} and \eqref{eq:dule1} allow us to apply Lemma \ref{lem:remove-weight} with
$(N,g_N)=(M,G)$, $\theta_N=\theta$, $u_N=u$,
$\lambda=\delta/2$, and $B=\beta\pi/(2\mu)$.
Since $\beta=n-1$, the conformal metric in that lemma
is precisely $\widetilde g$.
It follows that $\widetilde g$ is complete and
$$
 R_{\widetilde g}
 \ge\frac{\delta}{2}e^{-\frac\pi\mu}>0.
$$
Set $\sigma=\frac{\delta}{2}e^{-\frac\pi\mu}$ and define
\begin{equation}\label{eq:finalmetric}
 \bar g
 =\sigma\widetilde g.
\end{equation}
The metric $\bar g =\sigma e^{-\frac{2\theta}{\beta}}(h+df\otimes df)$ is smooth and complete. Moreover,
$$
 R_{\bar g}
 =\sigma^{-1}R_{\widetilde g}\ge1
 \qquad\text{on }M.
$$
This proves Theorem \ref{thm:main}.
\end{proof}

\end{document}